\newcommand\blfootnote[1]{%
	\begingroup
	\renewcommand\thefootnote{}\footnote{#1}%
	\addtocounter{footnote}{-1}%
	\endgroup
}
\theoremstyle{cupplain}
\newtheorem{theorem}{Theorem}[section]
\newtheorem{lemma}[theorem]{Lemma}
\newtheorem{corollary}[theorem]{Corollary}
\newtheorem{steps}{Step}
\newtheorem{thm}{Theorem}
\newtheorem*{thm*}{Theorem}
\newtheorem*{Franks' lemma}{Franks' Lemma}
\newtheorem*{keylemma}{Key Obstruction Lemma}
\newtheorem*{claim*}{Claim}
\newtheorem{claim}{Claim}
\newtheorem{prop}[theorem]{Proposition}
\newtheorem*{invproplemma}{Lemma (Invariance property)}
\newtheorem*{anglelemma}{Lemma (Uniform angle)}
\newtheorem*{dominatelemma}{Lemma (Uniform domination property)}
\DeclareMathOperator{\im}{Im}
\DeclareMathOperator{\m}{\mathfrak{m}}
\theoremstyle{cupdefinition}
\newtheorem{definition}{Definition}[section]
\newtheorem{remark}[theorem]{Remark}
\newtheorem{example}[theorem]{Example}
\newtheorem*{acknowledgement}{Acknowledgement}
\numberwithin{equation}{section}
\begin{document}

\title[Robust transitivity for endomorphisms]{Robust transitivity and domination for endomorphisms displaying critical points}
%\author{Author's Name}
 \author[C. Lizana]{C. Lizana} % $\dagger$}
\address{Departamento de Matem\'{a}tica. Instituto de Matem\'{a}tica
e Estat\'{i}stica.
Universidade Federal da Bahia. Av. Adhemar de Barros s/n, 40170-110. 
Salvador, Bahia, Brazil.}
 \email{clizana@ufba.br}

%\address{Departamento de Matem\'aticas. Facultad de Ciencias.
%Universidad de los Andes. La Hechicera-M\'erida 5101.
%Venezuela} \email{clizana@ula.ve}

\author[R. Potrie]{R. Potrie}
\address{CMAT, Facultad de Ciencias, Universidad de la Rep\'{u}blica, Uruguay.}
 \email{rpotrie@cmat.edu.uy}

\author[E. R. Pujals]{E. R. Pujals}
\address{CUNY, 365 Fifth Avenue New York, NY 10016, USA.}
 \email{epujals@gc.cuny.edu}

\author[W. Ranter]{W. Ranter} % $\ddagger$}
\address{Instituto de Matem\'{a}tica. Universidade Federal de Alagoas, Campus A.S. Simoes s/n, 57072-090. Macei\'o, Alagoas, Brazil.}
%\address{Postdoctoral fellow at Math Section of ICTP. Strada Costiera, 11. I-34151, Trieste, Italy.}
\email{wagnerranter@im.ufal.br}

%\date{\today}

\maketitle

%%%%%%%%%%%%%%%%%%%%%%%%%%%%%%%%%%%%%%%%%%%%%%%%%%%%%%%%%%%%%%%%%%%%%
\blfootnote{\P \, W.R. and \ddag \, R.P. were partially supported by CNPq/MCTI/FNDCT project 409198/2021-8, Brazil.  \ddag \, R.P. was also partially funded by CSIC. }\blfootnote{\dag \, C.L. and \S \, E.P. were partially supported by CNPq/MCTI/FNDCT project 406750/2021-1, Brazil. \S \, E.P. received support of NSF via grant DMS1956022.}
%%%%%%%%%%%%%%%%%%%%%%%%%%%%%%%%%%%%%%%%%%%%%%%%%%%%%%%%%%%%%%%%%%%%%

\begin{abstract}
	We show that robustly transitive endomorphisms of a closed manifolds must have a non-trivial dominated splitting or be a local diffeomorphism. This allows to get some topological obstructions for the existence of robustly transitive endomorphisms. To obtain the result we must understand the structure of the kernel of the differential and the recurrence to the critical set of the endomorphism after perturbation. 
\end{abstract}

%%%%%%%%%%%%%%%%%%%%%%%%%%%%%%%%%%%%%%%%%%%%%%%%%%%%%%%%%%%%%%%%%%%%%%%%%%%%%%%%%%%%%%%%%%%%%%%%%%%%%%%%%%%%%%%%%%%%%%%%%%%%%%%%%%%%%%%%%%%%%%%%%%PRELIMINARIES-SECTION%%%%%%%%%%%%%%%%%%%%%%%%%%%%%%%%%%%%%%%%%%%%%%%%%%%%%%%%%%%%%%%%%%%%%%%%%%%%%%%%%%%%%%%%%%%%%%%%%%%%%%%%%%%%%%%%%%%%%%%%%%%%%%%%%%%%%%%%%%%%%%%%%%%%%%%%%%%%%%%%%%%%%%%%%%%%%%%%%%%%%%%%%%%%%%%%%%%%%%%%%%%%%%%%%%%%%%%%%%%%%%%%%%%%%%%%%%%%%%%%%%%%%%%%%%%%%%%%%%%%
\section{Introduction}
Throughout this paper, unless specified, $M$ denotes a $d$-dimensional closed Riemannian manifold and $\mathrm{End}^1(M)$ the set of all $C^1$-maps from $M$ into itself endowed with the $C^1$-topology. The elements of $\mathrm{End}^1(M)$ are called \textit{endomorphisms}. Some of them exhibit \textit{critical points}, that is, points on which the derivative is not a linear isomorphism; and the other ones, endomorphisms without critical points, are local diffeomorphisms or diffeomorphisms.

An endomorphism $f$ is said to be \textit{robustly transitive} if there exists a neighborhood $\mathcal{U}_f$ of $f$ in $\mathrm{End}^1(M)$ such that every $g \in \mathcal{U}_f$ is transitive, where \textit{transitive} means the existence of a dense forward orbit.

It should be pointed out that we are actually defining $C^1$ robust transitivity. $C^r$ robust transitivity could also be defined using the $C^r$-topology.  Our approach cannot be extended for $C^r$ robust transitivity since many of the techniques used here do not work in $C^r$-topology and, in \cite{IP}, it is constructed an example of a $C^2$-robustly transitive endomorphism which is not $C^1$-robustly transitive. 

The main purpose of this paper is to show that dominated splitting is a necessary condition for the  existence of robustly transitive endomorphisms displaying critical points. Concretely, we prove the following result.

\begin{thm}\label{thm A}
	Every robustly transitive endomorphism displaying critical points admits a nontrivial dominated splitting.
\end{thm}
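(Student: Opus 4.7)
The plan is to argue by contradiction, adapting to endomorphisms the philosophy developed by Ma\~n\'e, D\'iaz--Pujals--Ures, and Bonatti--D\'iaz--Pujals for diffeomorphisms: assuming no nontrivial dominated splitting exists, one perturbs $f$ to produce either a sink or a trapping region, contradicting transitivity of the perturbation. The novelty is that critical points contribute kernel directions of $Df$ which must be incorporated into the usual splitting-or-perturb dichotomy.

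First, from robust transitivity I would extract two working ingredients for a $C^1$-generic $g$ near $f$: density of periodic points (via a Pugh-type closing lemma in the endomorphism setting) and robust recurrence of the critical set $C_g$, since dense forward orbits force every neighborhood of $C_g$ to be revisited arbitrarily often. The periodic orbits I would try to exploit are those whose closure meets, or is arbitrarily close to, $C_g$, because these are precisely the places where the failure of domination can interact with the drop in rank of the differential.

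Second, I would invoke a Franks-type perturbation lemma adapted to endomorphisms, realizing arbitrary small perturbations of the derivative cocycle along a periodic orbit as the differential of a genuine $C^1$-close endomorphism. In the absence of a dominated splitting, small rotations composed along a long periodic orbit mix all candidate invariant bundles; coupling this mixing with the rank drop at a nearby critical point should produce, after perturbation, a periodic point $p$ such that $Dg^n_p$ sends a full neighborhood into a thin cone around a proper subspace which can be further arranged to be contracting. This yields an open set mapped strictly into itself by some iterate of $g$, contradicting transitivity.

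The main obstacle I foresee lies in controlling the critical set and its kernel under perturbation: generic $C^1$-small perturbations displace $C_f$ and may change the dimension of the kernel of $Df$, so one must perturb within a class that preserves the geometry of the critical set while simultaneously aligning the kernel direction at the perturbed critical point with a non-dominated direction along the associated periodic orbit. This geometric matching — between the kernel of $Dg$ at a critical point and the weak direction along a recurrent periodic orbit — is, in my view, the heart of the argument, and it is what I would expect the Key Obstruction Lemma advertised in the paper to formalize. Once the matching is achieved, the standard trapping-region mechanism closes the contradiction and forces the existence of a nontrivial dominated splitting.
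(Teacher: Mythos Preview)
Your high-level philosophy is right --- absence of domination should permit a $C^1$-perturbation incompatible with robust transitivity --- but the mechanism you propose has a genuine gap, and the paper's route is substantively different. The paper explicitly observes that the diffeomorphism obstructions (sinks and sources) do \emph{not} transfer: sources are harmless for non-invertible maps (any expanding map has them), and your plan to manufacture a sink or trapping region from a periodic orbit near the critical set is not what the Key Obstruction Lemma does. The actual obstruction is \emph{full-dimensional kernel}: if some $\dim\ker(Df^m_x)=d$, then Franks' Lemma linearizes $f$ along $\{x,\ldots,f^{m-1}(x)\}$ and collapses a whole open set to a point, killing transitivity. Thus the target of the perturbation is kernel dimension, not contraction.

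Consequently, the paper never invokes periodic orbits or a closing lemma. One fixes $\kappa<d$ as the maximal kernel dimension of $Df^m$ achievable in a $C^1$-neighborhood, perturbs so that $\mathrm{Cr}_\kappa(f)=\{x:\dim\ker(Df^{m_f}_x)=\kappa\}$ has nonempty interior, and along orbits visiting $\mathrm{Cr}_\kappa(f)$ infinitely often in both time directions (dense by transitivity, no closing lemma needed) defines $E(x_i)=\ker(Df^{m_f+\tau_i^+}_{x_i})$ and $F(x_i)=\mathrm{Im}(Df^{|\tau_i^-|}_{x_{i+\tau_i^-}})$. The dichotomy (Theorem~B) is: either $E\oplus F$ is uniformly $(\alpha,\ell)$-dominated, or small rotations along a finite orbit segment where domination fails push a vector of $F$ into $E$, raising $\dim\ker$ above $\kappa$ --- contradicting the choice of $\kappa$. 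Your ``matching the kernel at a critical point with a non-dominated direction'' is morally what happens, but it is built into the definition of $E\oplus F$ rather than arranged along a periodic orbit. A further issue your sketch does not touch, and which is essential here, is that for endomorphisms a dominated splitting on a non-compact invariant set need not extend to the closure (see Example~2.1 in the paper); one must also establish a uniform lower bound on $\varangle(E,F)$, which is why the paper proves $(\alpha,\ell)$-domination and then passes to the limit via cone fields.
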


An endomorphism $f$ admits nontrivial \textit{dominated splitting of index $\kappa$} if for every \textit{orbit} $(x_i)_i$, that is, a sequence of points $(x_i)_i$ in $M$ such that $f(x_{i})=x_{i+1}$ for each $i \in \mathbb{Z}$, there are two 
nontrivial families $(E(x_i))_i$ and $(F(x_i))_i$ of $\kappa$ and $(d-\kappa)$-dimensional subspaces, respectively, satisfying the following.

\begin{description}
	\item[Invariant splitting:] for each $i \in \mathbb{Z}$ one has that
	\begin{align*}
	& T_{x_i}M=E(x_i)\oplus F(x_i), \ \  Df(E(x_i)) \subseteq E(f(x_i))\\ & \text{and} \ \ Df(F(x_i))=F(f(x_i));
	\end{align*}
	
	\item[Domination property:] there is an integer $\ell>0$ independent of any orbit such that
	\begin{align*}
	\|Df^{\ell}(u)\|\leq \frac{1}{2}\|Df^{\ell}(v)\|,% \ \ \text{for every unit vectors}  \ \ u \in E(x_i) \ \ \text{and} \ \ v \in F(x_i).
	\end{align*}
	for every unit vectors $u \in E(x_i)$ and $v \in F(x_i)$.
\end{description}

We will sometimes abuse notation and call the families of subspaces by $E$ and $F$ subbundles (cf. Remark \ref{rem.subbundles}). Further, we denote the domination property by $E\prec F$ or $E\prec_{\ell} F$ if we want to emphasize the role of $\ell$.  See \S\ref{sec:weak-hyp} for further details about dominated splitting.

The authors believe that, in general, robustly transitive endomorphisms displaying critical points require more than just dominated splitting. It is feasible that the dominated splitting $E\oplus F$ provided by Theorem~\ref{thm A} admits the \textit{finest dominated splitting}\footnote{$E_1\oplus \cdots \oplus E_k$ is the finest dominated splitting if $E_1\prec E_2 \prec \cdots \prec E_k$ and none of the invariant subbundle $E_i$ admits a dominated splitting.} such as $E\oplus_{i=1}^k{F_i}$ which the derivative $Df$ restricted to the extremal subbundle $F_k$ is volume expanding. It was proved for surface endomorphism in \cite{CW1}. For higher dimension, that would be a similar result as one obtained for diffeomorphisms in \cite{BDP} which states, in \cite[Theorem 4]{BDP}, that every $C^1$-robustly transitive diffeomorphism admits a finest dominated splitting such that the derivative $Df$ restricted to the extremal subbundles are volume contracting and volume expanding, respectively. See \S\ref{brief} for further discussion. 

%{\sout{In our setting, it was proved for surface endomorphism in \cite{CW1}.That result would be as the one obtained for diffeomorphisms in \cite{BDP}. More precisely, {\cite[Theorem 4]{BDP}} states that assuming robust transitivity and that $\oplus_{i=1}^k E_i$ is the finest dominated splitting, then the extremal subbundles $E_1$ and $E_k$ are volume contracting and volume expanding, respectively. See \S\ref{brief} for further discussion.}}

As a consequence of the main result, we obtain the following topological obstruction. The proof is in  \S\ref{sec:proof of thm A}.

\begin{corollary}\label{cor:topobst}
	Even dimensional spheres do not admit robustly transitive endomorphisms.
\end{corollary}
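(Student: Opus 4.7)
The plan is to use the dichotomy in Theorem~\ref{thm A} to reduce the corollary to exhibiting a continuous proper subbundle of $TS^{2n}$, and then to rule that out by a Whitney-sum argument on Euler classes. Given a robustly transitive endomorphism $f\colon S^{2n}\to S^{2n}$, the first case split is whether $f$ displays critical points. If it does not, then $f$ is a local diffeomorphism of a closed manifold, hence a finite covering map, and since $S^{2n}$ is simply connected $f$ must actually be a diffeomorphism. A $C^1$-robustly transitive diffeomorphism admits a nontrivial dominated splitting by the results of Bonatti--D\'{\i}az--Pujals (in dimension $\geq 3$) or Ma\~n\'e (on surfaces), which already provides a continuous splitting $TS^{2n}=E\oplus F$ with $0<\dim E<2n$.

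If $f$ does display critical points, Theorem~\ref{thm A} produces a nontrivial dominated splitting $E\prec_\ell F$ along every orbit. The next step is to descend the dominating subbundle $F$ to a continuous subbundle of $TS^{2n}$. Fix a point $x$ whose forward orbit is dense (which exists because $f$ itself is transitive) and any preorbit of $x$; then $F(f^n(x))=Df^n(F(x))$ defines $F$ on the dense set $\{f^n(x)\}_{n\geq 0}$. The uniform domination exponent $\ell$, together with compactness of the Grassmannian, allows one to pass to limits and extend $F$ to a continuous subbundle of $TS^{2n}$ of some rank $k$ with $0<k<2n$.

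In either case the topological obstruction is then immediate. Fix an auxiliary Riemannian metric and write $TS^{2n}=F^\perp\oplus F$ as topological vector bundles. Since $H^1(S^{2n};\mathbb{Z}/2)=0$ for $n\geq 1$, both summands are orientable and possess Euler classes. The Whitney product formula gives
\[
e(TS^{2n})\;=\;e(F^\perp)\cup e(F)\in H^{2n}(S^{2n}),
\]
and the factors $e(F^\perp)\in H^{2n-k}(S^{2n})$ and $e(F)\in H^k(S^{2n})$ both vanish because $0<k<2n$ and the intermediate cohomology groups of $S^{2n}$ are trivial. Hence $e(TS^{2n})=0$, contradicting $\chi(S^{2n})=2$.

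The hard part is the extension step in the critical-point case: the splitting of Theorem~\ref{thm A} is only defined along orbits, which live naturally in the inverse limit of $f$, so one must argue both that $F(x)$ is well-defined as a subspace of $T_xS^{2n}$ independent of the chosen preorbit and that it varies continuously with $x$. Both assertions rest on the uniform exponent $\ell$ in the domination, together with a forward-attraction argument on the Grassmannian that characterizes $F(x)$ as the unique subspace of the given dimension whose forward iterates dominate all transverse directions; on a closed manifold this is enough to promote the splitting along a dense orbit to a genuine continuous subbundle of $TS^{2n}$.
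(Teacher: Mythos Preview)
Your overall structure---the case split on critical points, the reduction to exhibiting a proper continuous subbundle of $TS^{2n}$, and the Whitney-sum Euler class argument---matches the paper's proof exactly. The gap is in the critical-point case, where you attempt to descend $F$ rather than $E$. For a non-invertible endomorphism the subbundle $F$ genuinely depends on the choice of preorbit (this is precisely why the dominated splitting lives on the inverse limit $M_f$), so there is no reason for the limits of $F(f^{n_k}(x))$ along different subsequences $f^{n_k}(x)\to y$ to agree: each such subsequence endows $y$ with a different limiting preorbit, and hence in general a different $F(y)$. Your proposed forward-attraction characterization of $F(x)$ is also wrong as stated: any $(d-\kappa)$-plane transverse to $E(x)$ has forward iterates whose growth dominates the $E$-directions, so forward growth alone does not single out $F(x)$ among such planes.

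The fix is immediate: use $E$ instead. The paper observes (Remarks~\ref{rem.subbundles} and~\ref{proj-E}) that $E(x)$ depends only on the \emph{forward} orbit of $x$ and therefore descends directly to a continuous $\kappa$-dimensional subbundle of $TS^{2n}$; the construction in \S\ref{subsec:cone-criterion} makes this explicit by realizing $E(x)$ as the limit of the slowest singular directions of $Df^n_x$. Once you have $E\subset TS^{2n}$ with $0<\kappa<2n$, your Euler class computation with $E\oplus E^\perp$ goes through verbatim.
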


Note that the existence of robustly transitive diffeomorphisms in $S^3$ is a well known open problem, and a negative answer is expected (see e.g. \cite{DPU}). It makes sense to ask if examples of robustly transitive endomorphisms in $S^3$ may exist, while we expect this question to be difficult. 

We introduce now the following result that will be useful to prove Theorem~\ref{thm A}.

\begin{thm}\label{thm A1}
	Let $f_0$ be a robustly transitive endomorphism displaying critical points and  a neighborhood $\mathcal{U}_0$ of $f_0$ in $\mathrm{End}^1(M)$ such that every $f$ in $\mathcal{U}_0$ is transitive. Then, there exist an integer $\ell\geq 1$, a number $\alpha>0$, and subset $\mathcal{F}$ of $\mathcal{U}_0$ such that hold the following.
	\begin{enumerate}[label=$\mathrm{(\alph*)}$]
		\item $f_0$ is accumulated by the endomorphisms in $\mathcal{F}$; and
		\item every $f \in \mathcal{F}$ admits a dominated splitting $E\oplus F$ such that $E\prec_{\ell} F$ and the angle between $E$ and $F$ is greater than $\alpha$\footnote{It means that $\varangle(u,v)\geq \alpha$, for all vectors $u \in E(x_i),v \in F(x_i),$ for each $x_i$ along the orbit $(x_i)_i \in \Lambda_f$. For details see \S\ref{sec:weak-hyp}.}.
	\end{enumerate}
\end{thm}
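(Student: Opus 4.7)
The plan is to argue by contradiction. Assume the conclusion of Theorem~\ref{thm A1} fails: for every $\ell\geq 1$ and every $\alpha>0$ there is a $C^1$-neighbourhood $\mathcal{V}\subset \mathcal{U}_0$ of $f_0$ in which no $f$ admits a dominated splitting $E\prec_\ell F$ with uniform angle $\geq\alpha$. Fix $\ell$ large and $\alpha$ small to be chosen along the way, and pick $f\in\mathcal{V}$. Since $\mathcal{V}\subset\mathcal{U}_0$, $f$ is transitive, and since having critical points is a $C^1$-open condition near $f_0$, one can arrange $f$ to display critical points as well. The objective is then to construct a $C^1$-small perturbation $g\in \mathcal{V}$ of $f$ having an attracting periodic orbit; this sink contradicts the transitivity of $g$, hence the robust transitivity of $f_0$, producing the required contradiction.

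To set this up, I would first invoke a Franks-type perturbation lemma adapted to endomorphisms: any small perturbation of the linear cocycle $(Df_{x_i})_{i=0}^{N-1}$ along a finite orbit segment $(x_0,\dots,x_N)$ is realised as the derivative cocycle of some $g$ which is $C^1$-close to $f$, in a way compatible with the kernel of $Df$ at critical points. Next, I would use the failure of domination: for each index $\kappa\in\{1,\dots,d-1\}$, the non-existence of $E\prec_\ell F$ with angle $\geq\alpha$ on every orbit means that along suitable finite orbit segments the iterated cocycle $Df^\ell$ exhibits no uniform gap between its $\kappa$-th and $(\kappa{+}1)$-th singular values on log-scale, or its candidate directions collapse in angle. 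Then, using transitivity of $f$, I would push such a ``non-dominated'' segment so that its endpoint visits an arbitrarily small neighbourhood of the critical set; the kernel of $Df$ at that critical point supplies a whole direction along which the cocycle is arbitrarily contracting. Finally, I would close this arrangement into a periodic pseudo-orbit and, via a further application of the Franks-type lemma, realise it as a genuine periodic orbit of some $g$ whose return derivative has operator norm strictly less than one, producing the desired sink. A pigeonhole on the index $\kappa$, and on the parameters $(\ell,\alpha)$, at the end selects a single triple and a subfamily $\mathcal{F}$ accumulating $f_0$ for which the uniform splitting holds.

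The hard part, in my view, is the step combining the weak hyperbolic behaviour produced by the failure of domination with the non-invertibility coming from the kernel of $Df$ at critical points, while keeping the perturbation $C^1$-small. Classical Ma\~n\'e--Franks arguments for diffeomorphisms rely on the cocycle being invertible throughout, whereas here the cocycle has non-invertible links that one needs both to exploit (to gain free contraction) and to displace through perturbation (to place a critical passage where the non-dominated segment ends). Moreover, domination $E\prec F$ forces the kernel of $Df$ to lie inside $E$, since $Df$ must restrict to an isomorphism on $F$, so tracking the position of $\ker Df$ along perturbed orbits becomes a genuine obstruction and is presumably the role of the Key Obstruction Lemma announced elsewhere in the paper. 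A secondary difficulty is that perturbations may create or move critical points, so the whole construction has to be uniform with respect to the size and location of the critical set, which is what compels $\ell$ and $\alpha$ to be chosen once and for all at the outset.
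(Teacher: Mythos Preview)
Your proposal has a genuine gap: it never constructs the candidate splitting whose domination is to be established, and it targets the wrong obstruction.

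The paper's proof does not attempt to manufacture a sink. Instead it first fixes the integer $\kappa$ as the maximal kernel dimension of $Df^m$ achievable by any $f$ in $\mathcal{U}_0$ and any $m\geq 1$ (this maximum is $<d$ by the Key Obstruction Lemma). It then defines $\mathcal{F}$ as those $f\in\mathcal{U}_0$ for which the set $\mathrm{Cr}_\kappa(f)=\{x:\dim\ker Df^{m_f}_x=\kappa\}$ has nonempty interior, and shows via Franks' Lemma that such $f$ accumulate on $f_0$. For each such $f$ the candidate bundles are \emph{explicit}: along any orbit $(x_i)_i$ that visits $\mathrm{Cr}_\kappa(f)$ infinitely often in past and future, one sets $E(x_i)=\ker(Df^{m_f+\tau_i^+}_{x_i})$ and $F(x_i)=\mathrm{Im}(Df^{|\tau_i^-|}_{x_{i+\tau_i^-}})$. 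The obstruction used is not a sink but the following: if $E\oplus F$ fails to be $(\alpha,\ell)$-dominated for all $\alpha,\ell$, then by small rotations along a finite segment (again Franks' Lemma) one can push a vector of $F$ into $E$, producing a nearby $\hat f$ with $\dim\ker D\hat f^m>\kappa$, contradicting the maximality of $\kappa$. No periodic orbits, no closing lemma, and no pigeonhole over indices are involved; the index $\kappa$ is fixed at the outset by the kernel bound.

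Your plan instead tries to combine a non-dominated segment with a single pass through the critical set to force \emph{all} $d$ directions to contract and thereby build a sink. But the kernel at a critical point has dimension at most $\kappa<d$, so one pass yields contraction in at most $\kappa$ directions; getting the remaining $d-\kappa$ to contract from mere failure of domination is exactly what the invertible-cocycle arguments of Ma\~n\'e--BDP do, and those arguments do not transfer when links of the cocycle are singular. The paper sidesteps this by never aiming for a sink: it only needs to send \emph{one} vector of $F$ into $E$ to raise the kernel dimension, which is a much weaker goal and is achievable by a two-dimensional rotation argument (Lemma~\ref{lemma-matrices}). Finally, your assertion that ``having critical points is a $C^1$-open condition'' is false---it is closed---so you cannot simply assume nearby $f$ retain critical points; the paper handles this by working only with the subfamily $\mathcal{F}$ where $\mathrm{Cr}_\kappa(f)$ has nonempty interior.
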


Theorem~\ref{thm A1} implies uniformity of the dominated splitting for endomorphisms in $\mathcal{F}$ which will allow us, since $f_0$ is accumulated by $\mathcal{F}$, to extend the dominated splitting to $f_0$. 

\subsection{A brief history of robust transitivity}\label{brief}

Robust transitivity has been well studied in the diffeomorphism context. The first examples were given by Shub on $\mathbb{T}^4$ in \cite{Shub-ex} and by Ma\~{n}\'{e} on $\mathbb{T}^3$ in \cite{Mane}, which have an underlying structure weaker than hyperbolic, known as partially hyperbolic. However, Ma\~{n}\'{e} proved for surface diffeomorphisms in \cite{Mane-Closinglemma} that robust transitivity implies hyperbolicity and, in particular, the only surface that admits such systems is the torus, $\mathbb{T}^2$.

Bonatti and D\'{i}az, in \cite{BD}, construct a powerful geometric tool (called blender) to produce robustly transitive partially hyperbolic diffeomorphisms. Later, in \cite{BV}, Bonatti and Viana construct the first examples of robustly transitive diffeomorphisms with dominated splitting which are not partially hyperbolic. In \cite{DPU, BDP}, Bonatti, D\'{i}az, Pujals, and Ures prove for diffeomorphism on three and higher-dimensional manifolds that robust transitivity requires some weak form of hyperbolicity.

In view of this, a natural question arise. In general, do robustly transitive endomorphisms require some weak form of hyperbolicty?

In the local diffeomorphisms scenario, there are several advances. Based on the examples of robustly transitive diffeomorphisms were constructed robustly transitive non-expanding endomorphisms. In \cite{LP} were obtained necessary and sufficient conditions for robustly transitive local diffeomorphisms. In particular, it is shown that it is not necessary any weak form of hyperbolicity for the existence of a robustly transitive local diffeomorphism; a trivial example is an expanding linear endomorphism with complex eigenvalues, which does not admit a dominated splitting. That result shows a difference between diffeomorphisms and local diffeomorphisms setting. We note however that one can think of an endomorphism as having a strong stable bundle consisting on pre-orbits, and with this point of view, the results bear a closer analogy to those of diffeomorphisms. 

In the endomorphisms displaying critical points setting, the first examples were given in \cite{Berger-Rovella} and \cite{ILP}. Although these examples exhibit some form of weak hyperbolicity and are homotopic to a hyperbolic linear endomorphism on $\mathbb{T}^2$, it was recently that any result about necessary conditions were established. In \cite{CW1}, it was proved for surface endomorphisms that a weak form of hyperbolicity is needed for robust transitivity, so-called partial hyperbolicity. Furthermore, it was also proved that only the Torus and the Klein bottle support robustly transitive endomorphism exhibiting critical points; and that the action of such map on  the first homology group has at least an eigenvalue with modulus greater than one. % {\sout{it was given also several topological obstructions for the existence of robustly transitive endomorphism exhibiting critical points, such as the only surfaces supporting such map are the Torus and the Klein bottle; and the action of a robustly transitive endomorphism on the first homology group has at least an eigenvalue with modulus greater than one, and consequently, together with  Ma\~{n}{\'e}'s result in \cite{Mane-Closinglemma}, one concludes that there is no robustly transitive endomorphisms homotopic to the identity.}
Later, it was given in \cite{CW2} new classes of examples of robustly transitive endomorphisms. The examples are homotopic to expanding linear endomorphism on the torus or the Klein bottle; and an example of robustly transitive endomorphism of zero degree. In higher dimension, only recently in \cite{Morelli} it is constructed the first examples of robustly transitive endomorphisms displaying critical points.

\subsection{Comments about some previous approaches}

Here, we briefly comment on the main ingredients used to show that some (weak) form of hyperbolicity is a necessary condition for the existence of robust transitivity.

%\sout{In loose terms, it has been provided necessary conditions for $C^1$-robust transitivity in term of some (weak) form of hyperbolicity, and one of the main ingredients of the proof are certain obstructions for robust transitivity.}

\subsubsection{Key obstructions for robust transitivity}

In a broad sense, an obstruction for robust transitivity is some phenomenon which is incompatible with this feature.

Here, we discuss about these phenomena which play an important role in the proof that robust transitivity requires some weak form of hyperbolicity. These phenomena are in some sense related to ``the candidate for dominated splitting". Let us  define them. 

\begin{itemize}
	\item \textit{Source:} a periodic point $p$ for $f$, where $n_p \geq 1$ denotes its period, such that $Df^{n_p}_p$ is a matrix having all the eigenvalues with modulus greater than one.
	
	\smallskip
	
	\item \textit{Sink:} a periodic point $p$ for $f$, where $n_p \geq 1$ denotes its period, such that $Df^{n_p}_p$ is a matrix having all the eigenvalues with modulus less than one.
\end{itemize}

The set of all critical points of an endomorphism $f$ will be denoted by $\mathrm{Cr}(f)$ and its interior in $M$ denoted by $\mathrm{int}(\mathrm{Cr}(f))$.

\begin{itemize}
	\item  \textit{Full-dimensional kernel:} there exist a point $x \in \mathrm{Cr}(f)$ and an integer $n\geq 1$ such that $\dim \ker(Df^n_x)=d$.
\end{itemize}

\subsubsection{Key obstructions vs. dominated splitting for diffeomorphisms} 

Here, we discuss the role of sources and sinks as obstructions to obtain that a robustly transitive diffeomorphism admits a dominated splitting. First, it is easy to see that transitive diffeomorphisms do not admit neither sources nor sinks, otherwise there is a small neighborhood such that its image by some (backward or forward) iterate goes into itself, which is incompatible with transitivity.

Let $\mathcal{U}_f$ be a neighborhood of $f$ in $\mathrm{End}^1(M)$, where all endomorphisms in $\mathcal{U}_f$ are transitive diffeomorphisms. Let us start commenting the approaches for surface diffeomorphisms in \cite{Mane-Closinglemma} and on higher-dimensional manifold in \cite{DPU} and \cite{BDP}. 

\begin{itemize}
	\item In \cite{Mane-Closinglemma} is used the fact that sources and sinks are obstructions for transitivity to prove that the set of all the periodic points of any surface diffeomorphism in $\mathcal{U}_f$ is hyperbolic, and so, it has a ``natural" splitting given by the stable and unstable directions. Later, it is proved that the lack of domination property allows to create, up to a perturbation, sinks or sources contradicting the robust transitivity. Finally, it is used the classical result so-called Closing Lemma to extend the dominated splitting to the whole surface. Consequently,  every robustly transitive surface diffeomorphism admits a dominated splitting (weak hyperbolicity). More precisely, it is proved the following dichotomy.
	
	\begin{thm*}[\cite{Mane-Closinglemma}] Let $M$ be a closed surface. Then there is a residual subset $\mathcal{R} \subseteq \mathrm{Diff}^1(M)$ (i.e., $\mathrm{Diff}^1(M)$ the set of all the diffeomorphisms), $\mathcal{R}=\mathcal{R}_1\sqcup \mathcal{R}_2,$ such that every $f \in \mathcal{R}_1$ is an Axiom A and every $f \in \mathcal{R}_2$ has infinitely many sources and sinks.
	\end{thm*}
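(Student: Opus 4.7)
The plan is to combine generic properties with Franks' Lemma in a dichotomy argument. First I would pass to a residual subset $\mathcal{R}_0 \subseteq \mathrm{Diff}^1(M)$ on which every periodic point is hyperbolic (Kupka--Smale) and on which the periodic points are dense in the non-wandering set $\Omega(f)$ (this is the generic consequence of Pugh's closing lemma). Inside $\mathcal{R}_0$, let $\mathcal{R}_2$ denote those $f$ with infinitely many sources or sinks, which is a $G_\delta$ set inside $\mathcal{R}_0$, and set $\mathcal{R}_1 := \mathcal{R}_0 \setminus \mathcal{R}_2$. A standard Baire-category surgery (restricting to the open set where some fixed finite bound $N$ on sinks/sources holds) then allows one to organize $\mathcal{R}_0$ as the disjoint union of two residuals in two complementary open sets; the claim is that on the "finitely many sinks/sources" side one in fact has Axiom A.

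The dynamical core is to show $\mathcal{R}_1 \subseteq \{\text{Axiom A}\}$. Fix $f \in \mathcal{R}_1$ and let $P_s(f)$ be the set of saddle periodic points, so that $\Omega(f) \setminus \overline{P_s(f)}$ consists of only finitely many sink/source orbits. On $\overline{P_s(f)}$ the hyperbolic eigenspaces at periodic points give a candidate splitting $E \oplus F$. I would argue by contradiction: if no uniform dominated splitting exists between these bundles, then for any $\ell \geq 1$ one can find a saddle periodic orbit along which the composed differential fails the $\ell$-step domination inequality by a large margin. Franks' Lemma then lets one do an arbitrarily small $C^1$-perturbation of $f$ along such an orbit, replacing the differential by one whose eigenvalues coincide (or are complex conjugate); a further small rotational perturbation turns the orbit into a sink or a source. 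Iterating this construction along a sequence of longer and longer orbits, plus a careful Baire/limit argument, produces nearby diffeomorphisms in $\mathcal{R}_2$ arbitrarily $C^1$-close to $f$, contradicting the openness separating $\mathcal{R}_1$ from $\mathcal{R}_2$. Hence $\overline{P_s(f)}$ carries a nontrivial dominated splitting.

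The final step is to upgrade the dominated splitting on $\overline{P_s(f)}$ to uniform hyperbolicity. At the saddles themselves the bundle $E$ is already contracted and $F$ is already expanded; the only obstruction to uniformity along the whole compact invariant set $\overline{P_s(f)}$ is the existence of orbits on which the contraction of $E$ (or expansion of $F$) degenerates. By Mañé's ergodic closing lemma, any such degeneracy is accumulated by long periodic saddle orbits with near-neutral Lyapunov exponent on $E$; applying Franks' Lemma once more perturbs one such orbit into a sink, again contradicting $f \in \mathcal{R}_1$. This forces $E$ to be uniformly contracting and $F$ uniformly expanding, so $\overline{P_s(f)}$ is a hyperbolic set. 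Together with the finitely many hyperbolic sinks and sources, and using density of periodic points in $\Omega(f)$, this yields $\Omega(f)$ hyperbolic with periodic points dense: Axiom A.

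I expect the hardest step to be the Franks' Lemma creation of infinitely many sinks/sources from a single failure of uniform domination. The delicate point is that each individual perturbation is small only at the scale of one orbit; to get a robust contradiction one must either (i) arrange the perturbations to accumulate on a sequence converging to $f$ in $C^1$, or (ii) produce already one small perturbation with an unbounded number of sinks. Both require combining the Franks--Mañé cocycle arguments with compactness and Baire-category manipulations, and this is the technical heart of the theorem.
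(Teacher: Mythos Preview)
The paper does not actually prove this theorem; it is quoted from Ma\~{n}\'{e}'s work \cite{Mane-Closinglemma} and accompanied only by a short informal description of the strategy (lack of domination over periodic saddles $\Rightarrow$ create sinks/sources via Franks' Lemma; Closing Lemma to pass from periodic points to the whole non-wandering set). Your outline is a correct and more detailed version of exactly that strategy, so there is nothing to compare: you are reconstructing the standard Ma\~{n}\'{e} argument, which is what the paper is pointing to.

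One small comment on your write-up rather than on the mathematics: your decomposition $\mathcal{R}_1 := \mathcal{R}_0 \setminus \mathcal{R}_2$ will not by itself make $\mathcal{R}_1$ residual, and the phrase ``two residuals in two complementary open sets'' is doing real work that you only gesture at. The usual way this is organized is via a lower-/upper-semicontinuity argument for the number of sinks and sources (or for the hyperbolic pieces), producing a residual on which this count is locally constant; then one splits according to whether the count is finite or not. You clearly have this in mind, but as written the Baire bookkeeping is the least convincing sentence in the proposal, whereas the dynamical core (Franks' Lemma to create a sink from a non-dominated saddle, ergodic closing lemma to upgrade domination to uniform hyperbolicity) is laid out well and matches the paper's summary.
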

	
	\noindent In particular, every robustly transitive surface diffeomorphism is an Anosov diffeomorphism.
\end{itemize}

Note that in higher-dimensional manifolds even if each periodic point is hyperbolic, they could have different indexes (i.e., unstable directions of different dimensions) which hamper the choice of a ``natural" splitting over the set of all the periodic points. Thus, the approach followed in \cite{DPU,BDP} was slightly different.

\begin{itemize}
	\item In this context, they consider a hyperbolic saddle point $p$ of the diffeomorphism $f$ and its homoclinic class, denoted by $H(p,f)$. Then, one define ``naturally" a splitting using the stable and unstable directions over $H(p,f)$ and prove that if such splitting is no dominated, one can create a source or a sink for some perturbation of $f$. Since any diffeomorphism in $\mathcal{U}_f$ admits neither sources nor sinks, one has that $H(p,f)$ admits a dominated splitting. Finally, using classical results such as the Closing Lemma and Connecting Lemma, they extend the splitting to the whole manifold, proving that robust transitivity for diffeomorphisms requires dominated splitting (weak hyperbolicity).
	
	In fact, the result above follows as a consequence of the following.
	
	\begin{thm*}[\cite{BDP}]
		Let $p$ a hyperbolic saddle of a diffeomorphism $f$ defined on $M$. Then,
		\begin{itemize}
			\item either the homoclinic class $H(p,f)$ of $p$ admits a dominated splitting;
			\item or given any neighborhood $U$ of $H(p,f)$ and any integer $\ell\geq 1$, there exists $g$ arbitrarily $C^1$-close to $f$ having $\ell$ sources or sinks arbitrarily close to $p$, whose orbits are contained in $U$.
		\end{itemize}
	\end{thm*}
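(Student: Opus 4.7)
The plan is to follow the Bonatti--D\'iaz--Pujals strategy: extract a canonical candidate splitting on the dense set of periodic points in $H(p,f)$ that are homoclinically related to $p$, and show that if it fails to be dominated, then a combination of Franks' lemma perturbations and heteroclinic concatenations within the homoclinic class produces arbitrarily many sinks or sources arbitrarily close to $p$ with orbits in $U$.

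First I would introduce the set $\mathrm{Per}_h(p,f)$ of hyperbolic periodic saddles homoclinically related to $p$. By the Birkhoff--Smale theorem $\mathrm{Per}_h(p,f)$ is dense in $H(p,f)$, and every $q\in\mathrm{Per}_h(p,f)$ has the same stable index as $p$; along such orbits one has a canonical $Df$-invariant splitting $E^s\oplus E^u$ of the correct dimension. A routine continuity argument shows that an $\ell$-dominated splitting on a dense invariant subset extends to its closure, so if this candidate splitting is $\ell$-dominated on $\mathrm{Per}_h(p,f)$ for some $\ell$, then $H(p,f)$ admits a dominated splitting and we are in the first alternative. Hence we may assume that for every $\ell$ there exist periodic orbits in $\mathrm{Per}_h(p,f)$ along which the $\ell$-domination between $E^s$ and $E^u$ fails drastically, and (by the classical connecting lemma) we may take these orbits inside $U$ and passing arbitrarily close to $p$.

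Second, the technical heart is an obstruction lemma of the following type: if the candidate domination fails badly enough along a periodic orbit of sufficiently high period, then after a $C^1$-small Franks' lemma perturbation of $Df$ supported in a small tubular neighborhood of the orbit, all eigenvalues of the derivative around the orbit can be pushed strictly inside, or strictly outside, the unit circle, turning the orbit into a sink or a source for a small perturbation $g$ of $f$. The key linear-algebra content is that a failure of $\ell$-domination allows one to rotate vectors of $E^u$ into $E^s$ (and vice versa) via arbitrarily small local changes of $Df$ along the orbit, since the lack of a gap between $\|Df^\ell|_{E^s}\|$ and $\mathbf{m}(Df^\ell|_{E^u})$ means the rotation costs nothing in norm; iterating many such rotations along the orbit totally mixes the two bundles, after which an arbitrarily small eigenvalue adjustment (again via Franks' lemma) places the spectrum where desired.

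Third, to obtain $\ell$ such attractors or repellers simultaneously, I would pick pairwise disjoint periodic orbits $q_1,\dots,q_\ell$ in $\mathrm{Per}_h(p,f)\cap U$, along each of which the $\ell'$-domination fails for some large $\ell'$, and apply the perturbation step independently on each, with supports taken pairwise disjoint by shrinking tubular neighborhoods. Using heteroclinic intersections (which exist since all $q_i$ are homoclinically related to $p$) together with the connecting lemma, each $q_i$ can be arranged to pass arbitrarily close to $p$ with its whole orbit inside $U$. The main obstacle is ensuring \emph{coherence of type}: that a single perturbation $g$ yields either $\ell$ sinks or $\ell$ sources, not a mixture. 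This is resolved by observing that whether the obstruction lemma outputs a sink or a source is determined by the sign of the accumulated lack of domination along the chosen pieces, and by quantitatively selecting the $\ell$ orbits to all exhibit the same direction of failure one obtains a uniform outcome. Keeping this quantitative selection compatible with simultaneous $C^1$-smallness of all the localized perturbations, while ensuring every support lies inside $U$, is the delicate combinatorial point of the argument.
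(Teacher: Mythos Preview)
The paper does not prove this statement: it is quoted verbatim from \cite{BDP} in the historical discussion of \S1 as background motivation, with no proof or sketch given. There is therefore nothing in the present paper to compare your proposal against.

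That said, your outline is broadly in the spirit of the actual argument in \cite{BDP}: one does define the candidate splitting on periodic points homoclinically related to $p$, and one does use Franks' lemma to convert lack of domination along a periodic orbit into a sink or source. Two caveats are worth flagging. First, the linear-algebra core is more delicate than ``rotations cost nothing in norm'': the precise mechanism (transitions in \cite{BDP}) is that lack of domination lets one realize, by small perturbations along the orbit, a derivative cocycle whose return map has all eigenvalues of the same modulus (a homothety up to rotation), after which a further small perturbation makes it a sink or a source. Your description of ``totally mixing the bundles'' followed by ``an arbitrarily small eigenvalue adjustment'' glosses over why the resulting spectrum can be pushed entirely to one side of the unit circle; this requires the dedicated linear-algebra lemmas of \cite{BDP}. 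Second, producing $\ell$ sinks or sources does not require selecting $\ell$ distinct periodic orbits with a coherent ``direction of failure'' as you propose; in \cite{BDP} one instead uses the transition structure of the homoclinic class to concatenate many copies of a single bad segment into long periodic orbits, each of which is then perturbed to a sink (or all to sources), and the disjointness of supports comes from taking orbits of increasing period. Your ``coherence of type'' discussion is therefore addressing a difficulty that does not arise in the actual proof.
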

	
	Even more,  it was proved that every robustly transitive diffeomorphism is volume hyperbolic which is a consequence of the following result.
	
	\begin{thm*}[\cite{DPU,BDP}]
		Let $\Lambda_f(U)$ be a robustly transitive set\footnote{A compact set $\Lambda$ is a robustly transitive set for $f$ if it is the maximal $f$-invariant set in some neighborhood $U$ and if, for every $g$ $C^1$-close to $f$, the maximal $g$-invariant set  $\Lambda_g(U)=\bigcap_{n \in \mathbb{Z}}g^n(U)$ is also compact and $g:\Lambda_g \to \Lambda_g$ is transitive.} and $E_1\oplus \cdots \oplus E_k$, $E_1\prec E_2 \prec \cdots \prec E_k$, be its finest dominated splitting. Then $\Lambda_f(U)$ is a volume hyperbolic set, that is, there exists an integer $\ell \geq 1$ such that $Df^{\ell}$ uniformly contracts  the volume in $E_1$ and uniformly expands the volume in $E_k$. 
	\end{thm*}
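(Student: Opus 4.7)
The plan is to argue by contradiction: if $\Lambda_f(U)$ carries the finest dominated splitting $E_1\oplus\cdots\oplus E_k$ but $Df$ does not uniformly contract volume on $E_1$ (the case of $E_k$ being symmetric via $f^{-1}$), then by a $C^1$-perturbation supported inside $U$ one produces a sink or a source, contradicting robust transitivity of $\Lambda_f(U)$.

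First I would quantify the failure of volume contraction on $E_1$. Negating uniform contraction of $|\det(Df^n|_{E_1})|$ yields, for every $\ell\geq 1$, orbit segments in $\Lambda_f(U)$ along which the Jacobian of $Df^{\ell}$ restricted to $E_1$ is bounded below by a fixed positive constant. A Pliss-type selection then extracts a subsequence of ``non-contracting times'' at which the forward products of these Jacobians remain bounded away from zero. Using Pugh's $C^1$ Closing Lemma, legitimate because the orbit segments sit inside a robustly transitive set and hence are chain recurrent, one approximates these segments by genuine periodic orbits $p_n$ of maps $g_n$ arbitrarily $C^1$-close to $f$, whose continuation of $E_1$ still fails to contract volume along the full period.

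The heart of the argument is the Bonatti--D\'{i}az--Pujals--Ures perturbation technique applied along each $p_n$. Because $E_1$ is a block of the \emph{finest} dominated splitting it admits no further dominated subsplitting, and this is precisely what allows one to find a $C^1$-small perturbation of $g_n$, supported on a small neighborhood of the orbit of $p_n$, after which $Dg_n^{n_{p_n}}|_{E_1(p_n)}$ is a homothety. Since the determinant of this block is preserved up to an arbitrarily small loss, the ratio of the homothety has modulus at least $1-\varepsilon$. The domination $E_1\prec E_2\prec\cdots\prec E_k$ forces the eigenvalues in the outer blocks to dominate this modulus after finitely many iterates, so the perturbed map exhibits a source at $p_n\in U$, contradicting transitivity. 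The symmetric argument, applied to $f^{-1}$ and $E_k$, produces a sink whenever $E_k$ fails to uniformly expand volume, and yields the uniform $\ell$ by compactness once the dichotomy is settled.

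The main obstacle is precisely the homothety perturbation: producing a $C^1$-small change whose effect on the linearized cocycle along a long periodic orbit is a prescribed nearly-conformal map is delicate, and it is essentially the assumption that $E_1$ carries no finer dominated splitting that allows the many small rotations between eigendirections to compose into a homothety rather than being suppressed by some residual invariant subbundle. This step is the technical core of \cite{BDP} and rests on an iterated use of Franks' Lemma together with a Lie-theoretic analysis inside $GL(\dim E_1,\mathbb{R})$.
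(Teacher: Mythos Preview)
The paper does not contain a proof of this statement: it is quoted in the historical discussion of \S1 as a result from \cite{DPU,BDP}, with no argument given. There is therefore nothing in the present paper to compare your proposal against.

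That said, your sketch is a faithful outline of the argument in \cite{BDP}. The logical structure---negate volume contraction on $E_1$, use a Pliss/closing mechanism to produce periodic orbits with Jacobian on $E_1$ bounded below, then invoke the indecomposability of $E_1$ to perturb the period cocycle to a homothety via Franks' Lemma, and finally use domination to conclude all eigenvalues have modulus larger than one---is exactly the scheme of \cite[Theorem~4 and Proposition~7.6]{BDP}. One caution on the quantitative step: saying the homothety ratio is ``at least $1-\varepsilon$'' is not by itself enough to force a source; what the BDP argument actually arranges (via Ma\~n\'e's Ergodic Closing Lemma and a careful choice of periodic orbit) is that the Jacobian on $E_1$ over the full period can be made as close to $1$ as desired \emph{while the period grows}, so that after the homothety perturbation a further arbitrarily small dilation pushes the ratio strictly above $1$. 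With that refinement your outline is correct.
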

\end{itemize}

\subsubsection{Key obstructions vs. dominated splitting for non-invertible endomorphisms} 
For the case of non-invertible endomorphisms, the situation changes dramatically. On the one hand, the existence of a source no longer is an obstruction, we can consider, for example, an expanding map. On the other hand, as it was said before, there are examples of local diffeomorphisms on surfaces without dominated splitting. Moreover, when it is considered endomorphisms having critical points, the full-kernel obstruction (which was first introduced in \cite{CW1}) plays an essential role.

\begin{keylemma}\label{obstruction:rt}
	There are no robustly transitive endomorphisms exhibiting full-dimensional kernel.
\end{keylemma}

In order to prove this, we use the following classical tool in $C^1$-perturbative arguments introduced by John Franks in \cite{Franks} for diffeomorphisms that can be easily adapted for endomorphisms as follows.

\begin{Franks' lemma}\label{lemma:Franks}
	Given $\mathcal{U}$ open set in $\mathrm{End}^1(M)$ and $f \in \mathcal{U}$, there exist $\varepsilon >0$ such that for every finite collection of distinct points $\Sigma=\{x_0,...,x_n\}$ in $M$, and linear maps
	$$L_i:T_{x_i}M \to T_{f(x_i)}M \ \ \text{such that} \ \ \|L_i-Df_{x_i}\|<\varepsilon, \ \ \text{for} \ \ 0\leq i \leq n,$$
	there exist an endomorphism $\hat{f} \in \mathcal{U}$, a neighborhood $B$ of $\Sigma$, and a family of balls $\{B_i\}_{i=0}^n$ contained in $B$, where $B_i$ is centered at $x_i$, verifying:
	\begin{align*}
	&\hat{f}(x_i)=f(x_i) \ \ \text{and} \ \ \hat{f}\mid_{B_i}=L_i, \ \ \text{for each} \ \ 0\leq i \leq n; \\& \text{and} \ \ \hat{f}(x)=f(x), \ \ \text{for every} \ \ x \in M\backslash B,
	\end{align*}
	where by abuse of notation $\hat{f}\mid_{B_i}=L_i$  means the action of $\hat{f}$ in each $B_i$ is equal to the linear map $L_i$. 
\end{Franks' lemma}

Thus, we can conclude that the \hyperref[obstruction:rt]{Key Obstruction Lemma} follows from \hyperref[lemma:Franks]{Franks' Lemma} applied to $\Sigma=\{x,f(x),\dots,f^{m-1}(x)\}$ and $L_i=Df_{f^i(x)}$ for each $i=0,1,\dots,m-1$, where $\ker(Df^m_x)$ is $d$-dimensional. Hence, there is an endomorphism $\hat{f}$ arbitrarily close to $f$ so that $\hat{f}$ is equal to $L_i=Df_{f^i(x)}$ around $f^i(x)$ for each $i=0,1,\dots,m-1$. In particular, $\hat{f}^m$ is equal to $Df^m_x$ around $x$ and, hence, the image of such neighborhood of $x$ by $\hat{f}^m$ is exactly one point which contradicts transitivity. Therefore, $f$ cannot be a robustly transitive endomorphism.

\subsubsection{Two-dimensional endormorphisms with critical points}

Let us quickly comment about the approach in \cite{CW1}. For a robustly transitive surface endomorphism $f$ displaying critical points having nonempty interior, one can define the set $\Lambda$ consisting of all the orbits $(x_i)_i$ which get into the interior of the critical set infinitely many times for the past and the future. That is, $(x_i)_i \in \Lambda$ if and only if $x_i \in \mathrm{int}(\mathrm{Cr}(f))$ for infinitely many $i<0$ and infinitely many $i>0$. In order to simplify the notation, let us also denote by $f$ the map on the space of all the orbits defined by $(x_i)_i \mapsto (f(x_i))_i=(x_{i+1})_i$. Then, one has that $\Lambda$ is $f$-invariant and, moreover, one can define for every $(x_i)_i$ in $\Lambda$ an invariant splitting of the tangent bundle over $\Lambda$ as follows,
\begin{align}\label{eq:spliting}
E(x_i)=\ker(Df_{x_i}^{\tau_i^{+}+1}) \ \ \text{and} \ \ F(x_i)=\mathrm{Im}(Df_{x_{i+\tau_i^{-}}}^{|\tau_i^{-}|}),
\end{align}
where $\tau_i^{+}\geq 0$ is the time that $x_i$ takes to enter in the critical set for the first time, and $\tau_i^{-} < 0$ is the time that $x_{i}$ takes to go back to the critical set along the orbit $(x_i)_i$ for the first time. In particular, we have $x_{i+\tau_i^{+}}$ and $x_{i+\tau_i^{-}}$ in $\mathrm{Cr}(f)$.

The splitting $E\oplus F$ is well defined over $\Lambda$ because the full-dimensional kernel obstruction guarantees that $\ker(Df^n_{x_i})$ is at most one-dimensional for any $n\geq 0$. In particular, $E$ is one-dimensional and $Df$-invariant. Furthermore, it is used the definition of $\tau_i^{-}$ together with the fact that $\dim\ker(Df^n_{x_i}) \leq 1$, for all $n\geq 1$, to show that $F$ is $Df$-invariant. Then, to get that $E\oplus F$ is a dominated splitting, it is proved  in \cite{CW1} that the absence of domination property allows to create a point having full-dimensional kernel for some $C^1$-perturbation of $f$ which is incompatible with robust transitivity. Thus, $E\oplus F$ is a dominated splitting and it can be extended to the closure of $\Lambda$ which is the whole space. Finally, we use that every robustly transitive endomorphism displaying critical points is approximated by such kind endomorphisms having dominated splitting, which allows us to push the domination splitting to the limit to conclude Theorem~\ref{thm A} for surface endomorphisms.

In higher dimension, the kernel of $Df^n$ may have distinct dimensions depending on $n$, even if the kernel of $Df$ has constant dimension. Moreover, the subbundles $E$ and $F$ on $\Lambda$ may have not constant dimension along the orbit on $\Lambda$.

In the sequel, we explain our strategy to figure out that obstacle and then proving Theorem~\ref{thm A}.

\subsection{Sketch of the proof of Theorem~\ref{thm A}}\label{subsec:sketch}
Let $f_0$ be a robustly transitive endomorphism displaying critical points and $\mathcal{U}_0$ a neighborhood of $f_0$ in $\mathrm{End}^1(M)$ such that every endomorphism in it is (robustly) transitive. Up to shrink $\mathcal{U}_0$, we can find $1\leq \kappa <d$ as the smallest integer satisfying
\begin{align}\label{eq:kappa}
\dim \ker(Df^m) \leq \kappa, \ \ \forall f \in \mathcal{U}_0,\ \ m\geq 1,
\end{align}
where $\dim \ker(Df)=\max_{x \in M}\dim \ker(Df_x)$. Otherwise, we could find $f \in \mathcal{U}_0$ and $m\geq 1$ such that $\dim\ker(Df^m)=d$, which by \hyperref[obstruction:rt]{Key Obstruction Lemma} is absurd since $f$ is also a robustly transitive endomorphism.

Since $\kappa$ is chosen as the smallest integer satisfying \eqref{eq:kappa}, it follows that $f_0$ can be approximated by $f \in \mathcal{U}_0$ satisfying the equality for some $m\geq 1$.
Let us define $m_f$ as the smallest positive integer $m$ such that
\begin{itemize}%\marginpar{??}
	\item $\{x \in M:\dim \ker(Df^m_x)=\kappa\}$ has nonempty interior; or
	\item if such subset above has empty interior then we take $m_f$ as the smallest one such that $\dim \ker(Df^m_x)=\kappa$, for some $x \in M.$
\end{itemize}

In order to avoid any confusion, we point out that the second item must be considered if and only if the interior of $\{x \in M:\dim \ker(Df^m_x)=\kappa\}$ is empty.

From now on, let us define $\mathrm{Cr}_{\kappa}(f)$ as the set $\{x \in M:\dim \ker(Df^{m_f}_x)=\kappa\}$. This set plays an important role in our approach. Furthermore, we denote the set of all endomorphisms $f$ in $\mathcal{U}_0$ which $\mathrm{Cr}_{\kappa}(f)$ has nonempty interior by $\mathcal{F}_0$.

It should be noted that $\mathcal{F}_0$ is nonempty and accumulates at $f_0$. Indeed, given $x \in \mathrm{Cr}_{\kappa}(f)$ for some $f$ close to $f_0$,
we can apply \hyperref[lemma:Franks]{Franks' Lemma} to $\Sigma=\{x,f(x),\dots,f^{m_f-1}(x)\}$ and %\sout{the linear maps}
$L_i=Df_{f^i(x)}$, for each $i=0,1,\dots,m_f-1$, to get an endomorphism $\hat{f}$ $C^1$-close to $f$ and, therefore, close to $f_0$, such that $ \ker(D\hat{f}^{m_f}_y)$ is $\kappa$-dimensional for every $y$ near $x$. Then, we conclude that $\mathrm{Cr}_{\kappa}(\hat{f})$ has nonempty interior and, moreover, $m_{\hat{f}}\leq m_f$.

We now define the set $\Lambda$ and the splitting $E\oplus F$, in our context of higher dimension.
For $f \in \mathcal{F}_0$, we define
\begin{align}\label{def:lambda}\tag{$\ast$}
\Lambda_f=\left\{(x_i)_i \subseteq M\left|\begin{array}{l}
x_{i+1}=f(x_i), \, \forall i \in \mathbb{Z}, \ \ \text{and} \ \ \exists (i_n)_{n} \subseteq \mathbb{Z} \ \ \text{such that} \\ x_{i_n} \in \mathrm{Cr}_{\kappa}(f) \ \  \text{for infinitely many} \ \ i_n<0 \ \ \text{and} \ \ i_n>0
\end{array}\right.\right\}
\end{align}
and
\begin{align}\label{def:EF}\tag{$\ast\ast$}
E(x_i)=\ker(Df_{x_i}^{m_f+\tau^{+}_i}) \ \ \text{and} \ \ F(x_i)=\mathrm{Im}(Df_{x_{i+\tau^{-}_i}}^{|\tau^{-}_i|}),
\end{align}
where 
\begin{align*}
\begin{array}{c}
\tau^{+}_i=\min\{n\geq 0:x_{i+n} \in \mathrm{Cr}_{\kappa}(f)\}, \ \  \text{and} \ \
\tau^{-}_i=\max\{n\leq -m_f:x_{i+n} \in \mathrm{Cr}_{\kappa}(f)\}.
\end{array}
\end{align*}

Note that $\tau^{-}_i$ and $\tau^{+}_i$ are slightly different from the ones in \cite{CW1} (recall definition in \eqref{eq:spliting}). However, it should be noted that $m_f$ is the time that $\ker(Df^n)$ have maximal dimension in $\mathcal{U}_0$ and $\mathrm{Cr}_{\kappa}(f)$ is the set such that the kernel of $Df^{m_f}$ has maximal dimension. In particular, if $f$ is a surface endomorphism, we have that $m_f=1, \kappa=1$, $\mathrm{Cr}_1(f)$ is the critical set of $f$, and $\tau^{\pm}_i$ are the same as in \cite{CW1}.

We remark that (up to taking a subset) $\mathcal{F}_0$ is the natural candidate to prove Theorem~\ref{thm A1}. Then, it remains to show that there is an integer $\ell>0$ and a number $\alpha>0$ such that $E\oplus F$ is an $(\alpha,\ell)$-dominated splitting. To prove that, we will see in Lemma~\ref{lambdaset-non-empty} that $\Lambda_f$ given by \eqref{def:lambda} is dense in the inverse limit space (see \S\ref{sec:weak-hyp}) and, in Proposition~\ref{cont-ext},  $E\oplus F$ can be extended to the closure of $\Lambda_f$ once provided that $E\oplus F$ is an $(\alpha,\ell)$-dominated splitting.

To prove such uniform behavior, we state a technical dichotomy as follows. However, before doing it,  we would like to emphasize that to define $E$ and $F$ in \eqref{def:EF} it is  used only the fact that $\Lambda_f\neq \emptyset$ and $\dim \ker(Df^m)\leq \kappa$ for all $m\geq 1$.

\begin{thm}\label{thm B}
	Let $f_0$ be an endomorphism displaying critical points. Assume that there is an integer $1\leq \kappa<d$ and a set $\mathcal{F}$ consisting of endomorphisms converging to $f_0$ such that every $f \in \mathcal{F}$ satisfies that $\Lambda_f\neq \emptyset$ and
	$\dim \ker(Df^m)\leq \kappa, \forall m \in \mathbb{Z}$. 
	Then, only one of the following statement holds:
	\begin{itemize}
		\item either there exist $\ell>0$ and $\alpha >0$ such that for each $f \in \mathcal{F}$, the splitting $E\oplus F$ is ($\alpha,\ell$)-dominated splitting over $\Lambda_f$;
		\item or $f_0$ is accumulated by endomorphisms $g$ which $\ker(Dg^m)$ has dimension greater than $\kappa$, for some $m \geq 1$.
	\end{itemize}
\end{thm}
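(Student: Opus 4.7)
The plan is to argue by contradiction using \hyperref[lemma:Franks]{Franks' Lemma}. Suppose that neither alternative holds: fix a neighborhood $\mathcal{V}$ of $f_0$ in which $\dim\ker(Dg^m)\leq\kappa$ for every $g\in\mathcal{V}$ and every $m\geq 1$, and assume that for every $\ell\in\mathbb{N}$ and $\alpha>0$ there is some $f\in\mathcal{F}\cap\mathcal{V}$ (arbitrarily close to $f_0$, by the hypothesis on $\mathcal{F}$) admitting an orbit $(x_i)_i\in\Lambda_f$ along which the splitting $E\oplus F$ of \eqref{def:EF} fails to be $(\alpha,\ell)$-dominated, either because the angle between $E$ and $F$ is smaller than $\alpha$ at some $x_i$, or because $\|Df^{\ell}u\|>\tfrac{1}{2}\|Df^{\ell}v\|$ for some unit vectors $u\in E(x_j)$, $v\in F(x_j)$. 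The goal is to convert such a failure into a $C^1$-small perturbation $g\in\mathcal{V}$ of $f$ with $\dim\ker(Dg^N_y)\geq\kappa+1$ for some $N\geq 1$ and some $y\in M$, contradicting the choice of $\mathcal{V}$.

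Concretely, I would shift the orbit so that $y=x_0\in\mathrm{Cr}_\kappa(f)$, giving $E(y)=\ker Df^{m_f}_y$ of dimension $\kappa$ and $F(y)$ a $(d-\kappa)$-dimensional complement, and then apply \hyperref[lemma:Franks]{Franks' Lemma} to the segment $\{y,f(y),\dots,f^{N-1}(y)\}$ with $N$ depending only on $\ell$, $m_f$ and $\|Df\|_{\infty}$. Each $Df_{f^{i}(y)}$ is to be replaced by $L_i=(\mathrm{Id}+\varepsilon_i T_i)\circ Df_{f^{i}(y)}$, where the $T_i$ tilt the transported $F$-direction toward the transported $E$-direction. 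In the small-angle scenario a single rotation of size $O(\alpha)$ at $y$ brings a unit $v\in F(y)$ into $E(y)$, which kills one extra direction under $Df^{m_f}_y$. In the non-domination scenario the rotations are distributed across the $\ell$ iterates where the norm ratio remains bounded, and a standard Ma\~{n}\'{e}--Bonatti--D\'{\i}az--Pujals cocycle computation shows that the cumulative tilt needed to collapse a further direction into the kernel of the product $L_{N-1}\circ\cdots\circ L_0\circ Df^{m_f}_y$ can be distributed so that each $L_i$ stays within the Franks' threshold. The resulting $g$ then satisfies $\dim\ker(Dg^{N+m_f}_y)\geq\kappa+1$.

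The main obstacle I foresee is the non-domination sub-case: the cocycle rotation scheme, classical in the invertible setting, must be adapted to the non-invertible one where $E$ is defined via iterated kernels rather than stable directions and where $Df$ may itself drop rank along the orbit. The argument must simultaneously control that each $L_i$ respects the Franks' size bound, that the accumulated tilt produces a genuinely new direction killed after $N+m_f$ iterates, and that the original $\kappa$-dimensional kernel $E(y)$ is preserved (or even enlarged) under the composition. A secondary subtlety appears when $\mathrm{Cr}_\kappa(f)$ has empty interior: one must ensure that the return times $\tau_i^{\pm}$ defining the orbit are controlled uniformly, so that $N$ can indeed be chosen depending only on the universal constants $\ell$, $m_f$ and $\|Df\|_{\infty}$ rather than on the specific orbit chosen.
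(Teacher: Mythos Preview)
Your overall strategy---assume the second alternative fails, then turn any failure of $(\alpha,\ell)$-domination into a Franks' perturbation raising $\dim\ker(Dg^N)$ above $\kappa$---matches the paper's. The small-angle case is handled exactly as you describe (a single rotation; this is the paper's \hyperref[lemma-angles]{Uniform Angle Lemma}). The reduction to Lemma~\ref{Franks-version} is also correct: one does not need to shift so that $x_0\in\mathrm{Cr}_\kappa(f)$, nor bound the return times $\tau_i^{\pm}$ uniformly---the perturbation is supported only on the finite segment $\Sigma=\{x_0,\dots,x_{n-1}\}$ and the ambient kernel at $x_{\tau_0^-}$ is picked up automatically because the orbit lies in $\Lambda_f$.

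The genuine gap is in the non-domination sub-case, and it is precisely the obstacle you flag but do not resolve. The ``standard Ma\~{n}\'{e}--Bonatti--D\'{\i}az--Pujals cocycle computation'' (Lemma~\ref{lemma-matrices} in the paper) requires the restricted $2\times 2$ matrices $A_i=Df_{x_{i-1}}\mid_{V_{i-1}}$ to satisfy $\|A_i\|,\|A_i^{-1}\|\leq N$ for a \emph{uniform} $N$. If the orbit segment $x_0,\dots,x_{\ell-1}$ enters the region where $\dim\ker(Df)=\kappa$, then $Df\mid_{E(x_i)}$ can vanish and no such bound exists; your proposed tilts $L_i=(\mathrm{Id}+\varepsilon_i T_i)\circ Df_{x_i}$ cannot be kept within the Franks' threshold while achieving the required cumulative rotation. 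The paper's substantive contribution here is Lemma~\ref{lemma-domination}: when $\dim\ker(Df_0)=\kappa$, any orbit segment along which domination fails must stay \emph{outside} a fixed neighborhood $U$ of $\mathrm{Cr}_\kappa(f_0)$. This is proved via an auxiliary cone-field argument (Lemmas~\ref{VW-lemma}--\ref{constant-C_0}) showing that near $\mathrm{Cr}_\kappa(f_0)$ one already has $\|Df\mid_{E}\|<\rho\leq\tfrac{1}{2}\m(Df\mid_{F})$, so domination holds there automatically. Only after establishing this can one invoke the matrix lemma with a uniform $N$ on the complement of $U$ (or on all of $M$ in the easier case $\dim\ker(Df_0)<\kappa$). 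Without this step the argument does not close.
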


We have already shown that $\mathcal{F}_0$ satisfies the hypothesis of Theorem~\ref{thm B} and $f_0$ cannot be approximated by endomorphisms whose kernel has dimension greater than $\kappa$. Then, Theorem~\ref{thm A1} follows from Theorem~\ref{thm B} since Lemma~\ref{lambdaset-non-empty} implies  the density of $\Lambda_f$, and in Proposition~\ref{cont-ext} we prove that the dominated splitting can be extended to the closure of $\Lambda_f$ which is the space of all the orbits.

\subsubsection{Novelties and new techniques} We want to emphasize the new approaches brought by the present paper that differ with the ones developed for diffeomorphisms and surface endomorphisms having critical points.

\begin{itemize}
	\item It is used the kernel of $Df$ at the critical set (that could be multidimensional and have different dimension at distinct points) to build a candidate for a dominated splitting on a dense set. On the one side, this is substantially different on how the splitting is built for the case of diffeomorphisms where it is used the splitting over the periodic points. On the other side, the strategy goes beyond the approach for surface endomorphisms where the kernel of $Df^n$ has dimension one for any point in the critical set and any iterated $n$.
	
	\item A dominated splitting defined on an invariant non-compact set could not be extended to the closure (see Example~\ref{ex}). Therefore, a fine control on the angle between multidimensional subbundles have to be brought into consideration, an issue that is not present in previous approaches.
\end{itemize}

\subsection{How the paper is organized}
In \S\ref{sec:weak-hyp}, we discuss the notion of dominated splitting and some related properties, the equivalence of dominated splitting via cone criterion. In \S\ref{sec:proof of thm A}, we use Theorem~\ref{thm A1} to prove Theorem~\ref{thm A}. Finally, \S\ref{sec:proof of thm B} is devoted to the proof of Theorem~\ref{thm B} and recall that Theorem~\ref{thm A1} follows from Theorem~\ref{thm B}.

\section{Weak form of hyperbolicity for endomorphisms}\label{sec:weak-hyp}

In this section will be formalized the notion of dominated splitting in terms of invariant splitting for endomorphisms displaying critical points. Further, we will present some fundamental properties that will be useful throughout this paper.

Due to the fact that for an endomorphism a point may exhibit more than one preimage, it is natural to consider the \textit{inverse limit space of $M$ with respect to $f$},
\begin{align}\label{inverse-limit-space}
M_f=\{(x_i)_i: x_i \in M \ \ \text{and} \ \  f(x_{i-1})=x_i, \forall i \in \mathbb{Z}\}.
\end{align}

It is a compact metric space and the natural projection $(x_i)_i \mapsto x_0$ is continuous. Moreover, an endomorphism $f \in \mathrm{End}^1(M)$ induces a homeomorphism on $M_f$ defined by $(x_i)_i \mapsto (f(x_i))_i=(x_{i+1})_i$, whenever there is no confusion, it will also be denoted by $f$. The points in $M_f$ are called the orbits of $f$. A subset $\Lambda$ of $M_f$ is said to be \textit{f-invariant} or, simply, \textit{invariant} if $f^{-1}(\Lambda)=\Lambda$.

The concept of dominated splitting is established in the context of endomorphisms without critical points (i.e., local diffeomorphisms and diffeomorphisms) and its definition is the following.

We say that an $f$-invariant set $\Lambda \subseteq M_f$ admits a \textit{dominated splitting of index $\kappa$} for $f$ if for all $(x_i)_i \in \Lambda$ there exist two families $(E(x_i))_i$ and $(F(x_i))_i$ of $\kappa$ and $(d-\kappa)$-dimensional subspaces satisfying:
\begin{description}
	\item[Invariant splitting:] for each $i \in \mathbb{Z}$, one has that
	$$\left.\begin{array}{ll}
	Df(E(x_i))=E(f(x_i)), \ \ Df(F(x_i))=F(f(x_i)),\\
	\text{and} \ \ T_{x_i}M=E(x_i)\oplus F(x_i);
	\end{array}\right.$$
	
	\item[Domination property:] there exists $\ell \geq 1$ such that for each $i \in \mathbb{Z}$ and unit vectors $u \in E(x_i)$ and $v \in F(x_i)$, one has that
	$$\|Df^{\ell}(u)\|\leq \frac{1}{2}\|Df^{\ell}(v)\|.$$
\end{description}

When $f$ is a diffeomorphism, it is used $M$ instead of $M_f$ in the definition above. Recall that the domination property is denoted by $E \prec F$ or $E\prec_{\ell} F$ if we want to emphasize the role of $\ell$.

\begin{remark}\label{rem.subbundles}
	The families $E$ and $F$ are, actually, subbundles of the vector bundle of $M_f$ defined by $TM_f=\{((x_i)_i,v) : v \in T_{x_0}M\}$. Moreover, the bundle $E$ depends only on the forward orbit while $F$ depends only on the backward orbit of a point in $M_f$. This implies that the bundle $E$ induces a subbundle of $TM$ and this will be used in the proof of Corollary~\ref{cor:topobst}. 
\end{remark}

\begin{remark}\label{rmk:norm-conorm}
	Let $W$ be any inner product space and $\Phi:W \to W$ be a linear map. For a subspace $V$ of $W$, we denote $\Phi$ restricted to $V$ by $\Phi\mid_V$ and, respectively, define the norm and conorm of $\Phi\mid_V$ by:
	\begin{align}
	\|\Phi\mid_{V}\|=\max_{v \in V\backslash \{0\}}\frac{\|\Phi(v)\|}{\|v\|} \ \ \text{and} \ \ \m(\Phi\mid_{V})=\min_{v\in V\backslash\{0\}}\frac{\|\Phi(v)\|}{\|v\|}.
	\end{align}
	When $V$ is the whole space, we simply say that $\|\Phi\|$ and $\m(\Phi)$ are the norm and conorm of $\Phi$.
\end{remark}

From now on, by Remark~\ref{rmk:norm-conorm}, we rewrite the inequality in the domination property:
$$\|Df^{\ell}\mid_{E(x_i)}\|\leq \frac{1}{2}\m(Df^{\ell}\mid_{F(x_i)}).$$

It should be pointed out some differences when the derivative is not invertible everywhere.  For instance, suppose that $E \oplus F$ is a splitting over an $f$-invariant set $\Lambda \subseteq M_f$ verifying all the properties of the definition above. Note that if $u_E+u_F \in \ker(Df_{x_i})$, where $u_E \in E(x_i)$ and $u_F \in F(x_i)$, then $Df(u_E)$ is parallel to $Df(u_F)$ which affects the invariance property. Moreover, if $u_F \in  \ker(Df_{x_i})$ then the domination property implies that $ E(x_i)$ is contained in $\ker(Df_{x_i})$, and so, neither $E(x_i)$ nor $F(x_i)$ are invariant. Therefore, in order to extend the notion of dominated splitting, we require the following:
$$\ker(Df_{x_i}^n) \subseteq E(x_i), \ \ \textrm{for each} \ \ x_i \ \ \text{in} \ \ (x_i)_i \in \Lambda \ \ \text{and each} \ \ n\geq 1.$$

In addition, we also require that the angle between $E$ and $F$ is uniformly away from zero. This will allow to extend the dominated splitting to the closure of $\Lambda$, see Proposition~\ref{cont-ext}. Otherwise, it may exist an orbit $(x_i)_i$ such that $E(x_i)=\ker(Df_{x_i})$ and the angle of $E(x_i)$ and $F(x_i)$ goes to zero as $i$ goes to $+\infty$, then on somewhere on the boundary of $\Lambda$ the extension of $E$ and $F$ have some intersection. See the following example.

\begin{example}\label{ex}
	Let $(A_n)_n$ be a sequence of square matrices defined by $$A_n=\left(\begin{array}{cc}
	0 & 1\\
	0 & (n+1)^{-1}
	\end{array}\right), \, \forall n \geq 1.$$
	
	Take $E:=E_n$ and $F_n$ as the subspaces generated by $v=(1,0)$ and $v_n=(1,1/n)$, respectively. Then, $E_n \oplus F_n$ is a dominated splitting for $(A_n)_n$ since $A_n(E_n) \subseteq E_{n+1}$ and $A_n(F_n)=F_{n+1}$. Moreover, $$\|A_n\mid_{E_n}\|=0\leq \dfrac{1}{2}\|A_n\mid_{F_n}\|,$$ for each $n \geq 1$. However, $E_n,$ and $F_n$ converge to the same subspace $E$. 
	
	This example shows the existence of a dominated splitting along an orbit which cannot be extended to the closure.
\end{example}

Thus, before proposing the definition of dominated splitting for endomorphism displaying critical points, we should introduce the notion about angle between subspaces.

The $\textit{angle}$ between the non-zero vectors $v,w \in T_xM$ with respect to the metric $\langle \cdot,\cdot\rangle$ (which, for simplicity, we ignore the dependence of the inner product on $x$ in $M$) is defined as the unique number $\varangle(v,w)$ in $[0,\pi]$ satisfying $\cos\varangle(v,w)=\langle v,w\rangle /\|v\|\|w\|$. Then, given $V$ and $W$ two nontrivial subspaces of $T_xM$, we define the angle between them by:
\begin{align}
\varangle(V,W)=\min_{v \in V\backslash\{0\}} \min_{w \in W\backslash\{0\}} \varangle(v,w).
\end{align}

The angle between two subspace is a number contained in $[0,\pi/2]$. We also write $\varangle(\mathbb{R}v,W)$ to refer the angle between the space generated by a non-zero vector $v$ and the subspace $W$.

We would like to emphasize that $\varangle(V,W)=0$ does not mean that $V=W$, it just means that the intersection between $V$ and $W$ is nontrivial; and $\varangle(V,W)>\alpha$ for some $\alpha>0$ means that $V$ and $W$ are far away from each other. This notion of angle between two subspaces will be useful to extend the definition of dominated splitting in the context of endomorphisms displaying critical points. Furthermore, that angle allows us to define a distance on $\mathrm{Grass}_r(T_xM)$, called \textit{$r$-dimensional Grassmannian of $T_xM$}, which consists of all the $r$-dimensional subspaces of $T_xM$. For all $V$ and $W$ in $\mathrm{Grass}_r(T_xM)$, we define the distance between them by:
\begin{align}\label{dist}
dist(V,W)=\cos \varangle(V^{\perp},W),
\end{align}
where $V^{\perp}$ denotes the orthogonal complement of $V$. It is well known that the Grassmannian endowed with this distance is a compact metric space. For more details about the distance see \cite[Appendix A.1]{BPS}

Now, we are able to define a dominated splitting for endomorphisms exhibiting critical points.

\begin{definition}\label{defi:DS} Let $f \in \mathrm{End}^1(M)$ be an endomorphism displaying critical points.	An invariant subset $\Lambda$ of $M_f$ admits a dominated splitting of index $\kappa$ for $f$ if for all $(x_i)_i \in \Lambda$ there exist two families $(E(x_i))_i$ and $(F(x_i))_i$ of $\kappa$ and $(d-\kappa)$-dimensional subspaces such that the following properties hold. 
	\begin{description}
		\item[Invariant splitting:] for each $i \in \mathbb{Z}$, one has that
		$$\left.\begin{array}{ll}
		Df(E(x_i))\subseteq E(f(x_i)), \ \ Df(F(x_i))=F(f(x_i)),\\
		\text{and} \ \ T_{x_i}M=E(x_i)\oplus F(x_i);
		\end{array}\right.$$
		
		\item[Uniform angle:] there exists $\alpha >0$ such that $\varangle (E(x_i),F(x_i))\geq \alpha$ for each $i \in \mathbb{Z}$;
		
		\smallskip
		
		\item[Domination property:] there exists $\ell \geq 1$ such that $E\prec_{\ell} F$.
	\end{description}
\end{definition}

We will say that $E\oplus F$ is an $(\alpha,\ell)$-dominated splitting if we want to emphasize the role of $\ell$ and $\alpha$. When $\Lambda=M_f$, we also say that \textit{$f$ has a dominated splitting}. For simplicity, we will write $E\prec F$ instead of $E\prec_{\ell} F$ when there is no confusion.

\begin{remark}\label{rmks}
	Observe that if $E$ and $F$ satisfy the items in Definition~\ref{defi:DS}, then $Df\mid_F$ is an isomorphism and $\ker(Df_{x_i}^n) \subseteq E(x_i)$ for each $(x_i)_i \in \Lambda$ and $n\geq 1$.
\end{remark}

\begin{remark}
	It is not required any uniform angle property to define dominated splitting for endomorphisms displaying critical points in the introduction. The uniformity of the angle follows from the fact that the  manifold is compact and the subbundles are continuous.
\end{remark}

More general, an invariant subset $\Lambda$ of $M_f$ admits a dominated splitting of index $\kappa$ for $f$ if for all $(x_i)_i \in \Lambda$ there exist non-trivial families $(E(x_i))_i$ and $(F_j(x_i))_i, \ \ 1\leq j \leq r$,   of $\kappa$ and $d_j$-dimensional subspaces with $d_1+d_2+\cdots+d_r=d-\kappa$ satisfying:
\begin{description}
	\item[Invariant splitting:] for each $i \in \mathbb{Z}$, one has that
	$$Df(E(x_i))\subseteq E(f(x_i)), \ \ Df(F_j(x_i))=F_j(f(x_i)), \ \
	\text{and} \ \ T_{x_i}M=E(x_i)\oplus_{j=1}^r F_j(x_i);
	$$
	\item[Uniform angle:] there exists $\alpha >0$ so that $\varangle (E(x_i),\oplus_{j=1}^rF_j(x_i))\geq \alpha$ for each $i \in \mathbb{Z}$;
	\smallskip
	
	\item[Domination property:] $E\prec F_1 \prec F_2 \prec \cdots \prec F_r$.
\end{description}

\subsection{Dominated splitting properties}\label{subsec:ds property}
Throughout this section, we adapt to the context of endomorphisms displaying critical points some of the main properties about dominated splitting which appear in \cite{Crovisier-Potrie} in the diffeomorphisms context.

Let $f$ be an endomorphism displaying critical points and $E\oplus F$ be a dominated splitting over $f$-invariant subset $\Lambda$ of $M_f$. 

The uniqueness of the dominated splitting is guaranteed by the following.

\begin{prop}\label{uniqueness}
	If $G\oplus H$ is a dominated splitting over $\Lambda$ for $f$, which holds $\dim E=\dim G$, then $E(x_i)=G(x_i)$ and $F(x_i)=H(x_i)$ for all $x_i$ in the orbit $(x_i)_i \in \Lambda$.
\end{prop}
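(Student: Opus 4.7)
The plan is to first establish $E(x_i)=G(x_i)$ for every orbit $(x_i)_i\in\Lambda$ and every $i\in\mathbb{Z}$, and then to deduce $F(x_i)=H(x_i)$ from that equality. After replacing $\ell$ by a common multiple of the domination iterates of both splittings (the factor $\tfrac{1}{2}$ only improves to $2^{-n}$ after $n$-fold iteration), I may assume the same $\ell$ dominates $E\prec F$ and $G\prec H$, each with uniform angle $\geq\alpha_1$ and $\geq\alpha_2$ respectively.

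\textbf{Step 1 (uniqueness of the contracted bundle).} I will argue that at least one of the inclusions $E\subseteq G$ or $G\subseteq E$ holds pointwise along the orbit; equality of dimensions then forces $E=G$. Suppose by contradiction that both fail at some $x_i$: pick unit vectors $v\in E(x_i)\setminus G(x_i)$ and $w\in G(x_i)\setminus E(x_i)$. Write $v=v_G+v_H$ in $G\oplus H$ (so $v_H\neq 0$) and $w=w_E+w_F$ in $E\oplus F$ (so $w_F\neq 0$). Forward-iterating $v$, the angle bound $\varangle(G,H)\geq\alpha_2$ combined with $G\prec_\ell H$ yields the lower bound $\|Df^{\ell n}(v)\|\geq \sin(\alpha_2)\,\m(Df^{\ell n}\mid_{H(x_i)})\,\|v_H\|$, whereas using $v\in E(x_i)$ and $E\prec_\ell F$ yields $\|Df^{\ell n}(v)\|\leq 2^{-n}\,\m(Df^{\ell n}\mid_{F(x_i)})\,\|v\|$. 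The symmetric estimate with $w$ (with the roles of the two splittings swapped) provides the reverse comparison between $\m(Df^{\ell n}\mid_F)$ and $\m(Df^{\ell n}\mid_H)$. Multiplying the two resulting inequalities, the factors $\m(Df^{\ell n}\mid_F)$ and $\m(Df^{\ell n}\mid_H)$ — positive by Remark~\ref{rmks}, since $Df\mid_F$ and $Df\mid_H$ are isomorphisms — cancel, leaving $1\leq C\cdot 2^{-2n}$ for all $n\geq 1$ with $C$ independent of $n$; this is absurd for $n$ large.

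\textbf{Step 2 (uniqueness of the expanded bundle).} Granted $E=G$, both $F$ and $H$ are invariant complements of the common bundle $E$. Fix a unit $v\in F(x_i)$ and decompose $v=v_E+v_H$ in $E\oplus H$; the goal is $v_E=0$. By Remark~\ref{rmks}, $Df\mid_F$ is an isomorphism, so there is a unique backward lift $v^{(-m)}\in F(x_{i-\ell m})$ with $Df^{\ell m}(v^{(-m)})=v$ and $\|v^{(-m)}\|\leq \|v\|/\m(Df^{\ell m}\mid_F)$. Decomposing $v^{(-m)}=a_m+b_m$ in $E\oplus H$ at $x_{i-\ell m}$ and pushing forward, the forward invariance $Df(E)\subseteq E$ and $Df(H)=H$, together with uniqueness of the $E\oplus H$ decomposition, identify $v_E=Df^{\ell m}(a_m)$. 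The angle bound $\varangle(E,H)\geq\alpha_2$ gives $\|a_m\|\leq \|v^{(-m)}\|/\sin(\alpha_2)$, and the domination $E\prec_\ell F$ then yields
\[
\|v_E\|\leq \|Df^{\ell m}\mid_E\|\,\|a_m\|\leq 2^{-m}\,\m(Df^{\ell m}\mid_F)\cdot \frac{\|v\|}{\sin(\alpha_2)\,\m(Df^{\ell m}\mid_F)}=\frac{2^{-m}}{\sin(\alpha_2)}\|v\|.
\]
Letting $m\to\infty$ gives $v_E=0$, hence $v\in H(x_i)$; thus $F(x_i)\subseteq H(x_i)$ and equality follows by dimension.

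The main obstacle is Step 1, where the argument has to be carried out pointwise along a single orbit with neither compactness nor recurrence at our disposal. The key device is the symmetric use of the two one-sided comparisons so that the unknown ratio between $\m(Df^{\ell n}\mid_F)$ and $\m(Df^{\ell n}\mid_H)$ cancels and one is left with the pure $2^{-2n}$ decay. Once $E=G$ is established, Step 2 is a cleaner forward/backward iteration exploiting only that $Df$ restricted to an expanded bundle is an isomorphism, together with the uniform angle between the candidate subbundles.
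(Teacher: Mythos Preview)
Your proof is correct, and it reaches the conclusion by a route that differs from the paper's in both halves. For the equality $E=G$ the paper proves the more general inclusion Lemma~\ref{lemma:uniqueness} (if $\dim E\leq\dim G$ then $E\subseteq G$) via a nested decomposition: one writes $u\in E$ as $u_G+u_H$, then further decomposes $u_H=u'_E+u'_F$ and argues $u'_F=0$ by growth comparison, eventually pinning down which of the cross-intersections $E\cap H$, $G\cap F$ must vanish. Your Step~1 bypasses this by exploiting the equal-dimension hypothesis directly: picking $v\in E\setminus G$ and $w\in G\setminus E$ simultaneously lets you multiply the two one-sided estimates so that the unknown ratio $\m(Df^{\ell n}\mid_F)/\m(Df^{\ell n}\mid_H)$ cancels. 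This is slicker but buys less, since it does not yield the refinement $E\oplus(G\cap F)\oplus H$ that the paper records. For the equality $F=H$ the paper's Lemma~\ref{F:uniqueness} splits into two cases (orbits avoiding $\mathrm{Cr}(f)$, where one can invert $Df$ and swap the roles of the bundles, versus general orbits, handled by reducing to a two-dimensional cocycle). Your Step~2 treats both cases uniformly by lifting $v\in F$ backward along $F$ using only that $Df\mid_F$ is an isomorphism, then pushing the $E$-component forward; the uniform angle bound and $E\prec_\ell F$ give the clean $2^{-m}$ decay without any case distinction. Both approaches are valid; yours is more economical for the exact statement at hand, while the paper's intermediate lemmas are slightly more general and feed into the later refinement of the splitting.
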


The proof needs some preliminaries.

\begin{lemma}\label{lemma:uniqueness}
	If $G\oplus H$ is a dominated splitting over $\Lambda$ for $f$ with $\dim E(x_i)\leq \dim G(x_i)$, for all $(x_i)_i \in \Lambda$, then $E(x_i)\subseteq G(x_i)$. In particular, $E\oplus (G\cap F) \oplus H$ is a dominated splitting over $\Lambda$ for $f$.
\end{lemma}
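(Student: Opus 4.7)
The plan is to first establish the pointwise inclusion $E(x_i) \subseteq G(x_i)$ along every orbit $(x_i)_i \in \Lambda$, and then to read off the three-summand dominated splitting $E \oplus (G \cap F) \oplus H$ from this inclusion. The first step is the substantive one, while the second is a routine check of the axioms in Definition~\ref{defi:DS}. Without loss of generality I may assume both splittings share the same domination time $\ell$, by replacing $\ell$ with a common multiple if necessary.

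For the inclusion, I argue by contradiction. Fix an orbit $(x_i)_i \in \Lambda$ and a unit vector $v \in E(x_0) \setminus G(x_0)$, and decompose $v = v_G + v_H$ in $G(x_0) \oplus H(x_0)$, so $v_H \neq 0$. The invariance of $G$ and $H$ under $Df$ combined with $G \prec_{\ell} H$ yields
\[
\frac{\|Df^{n\ell} v_G\|}{\|Df^{n\ell} v_H\|} \leq 2^{-n}\,\frac{\|v_G\|}{\|v_H\|} \xrightarrow[n\to \infty]{} 0,
\]
so $Df^{n\ell}v \in E(x_{n\ell})$ becomes arbitrarily aligned in direction with $H(x_{n\ell})$. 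Choosing unit vectors $w_n \in H(x_{n\ell})$ with $\varangle(w_n, Df^{n\ell}v) \to 0$ and decomposing $w_n = w_n^E + w_n^F$ in $E \oplus F$, the uniform angle $\varangle(E,F)\geq \alpha$ forces $\|w_n^F\| \to 0$. Iterating $w_n$ forward and applying $E \prec_{\ell} F$,
\[
\frac{\|Df^{m\ell}w_n^E\|}{\|Df^{m\ell}w_n^F\|} \leq 2^{-m}\,\frac{\|w_n^E\|}{\|w_n^F\|},
\]
for $m = m_n$ chosen sufficiently large the iterate $Df^{m_n\ell}w_n$, which by invariance lies in $H(x_{(n+m_n)\ell})$, becomes arbitrarily close in direction to $F(x_{(n+m_n)\ell})$. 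Alternating these two forward-alignment mechanisms — $E$-vectors aligning with $H$, then $H$-vectors aligning with $F$ — produces, at a common tangent space along the orbit, unit vectors whose near-collinearity quantitatively violates either $\varangle(E,F) \geq \alpha$ or $\varangle(G,H) \geq \alpha$, contradicting the uniform angle hypothesis. Hence $v_H = 0$ and $E(x_0) \subseteq G(x_0)$. The main obstacle is the careful synchronization of the two alignment estimates so that the contradiction occurs at a single point of the orbit; this is precisely where the \emph{uniformity} of $\alpha$ (rather than mere pointwise transversality) becomes essential.

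With the inclusion in hand, the three-summand splitting is immediate. For any $g \in G(x_i)$, write $g = g_E + g_F$ in $E(x_i) \oplus F(x_i)$; since $g_E \in E \subseteq G$, one has $g_F = g - g_E \in G \cap F$, yielding $G = E \oplus (G \cap F)$ and hence $T_{x_i}M = E(x_i) \oplus (G \cap F)(x_i) \oplus H(x_i)$. Invariance of $G \cap F$ under $Df$ is immediate from the invariance of $G$ and $F$, with $Df$ restricted to $G \cap F \subseteq F$ an isomorphism. The dominations $E \prec_{\ell} (G \cap F)$ and $(G \cap F) \prec_{\ell} H$ are inherited from $E \prec_{\ell} F$ (applied to vectors in $G \cap F \subseteq F$) and from $G \prec_{\ell} H$ (applied to vectors in $G \cap F \subseteq G$), and the required uniform angles between the three summands are inherited from those of $E \oplus F$ and $G \oplus H$. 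This verifies all the axioms of Definition~\ref{defi:DS} for $E \oplus (G \cap F) \oplus H$.
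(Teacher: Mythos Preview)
Your alignment argument does not reach the claimed contradiction. The uniform-angle hypotheses bound $\varangle(E,F)$ and $\varangle(G,H)$ from below, but your two mechanisms produce (i) a vector in $E$ close in direction to $H$ and then (ii) a vector in $H$ close in direction to $F$ --- neither of which is controlled by those bounds. To turn this into a violation of $\varangle(E,F)\geq\alpha$ you would need the vectors from (i) and (ii) to be nearly parallel \emph{at the same point of the orbit}; this forces $w_n$ to be specifically $Df^{n\ell}v_H/\|Df^{n\ell}v_H\|$ (not an arbitrary $H$-vector close to $Df^{n\ell}v$, since iteration can separate nearby directions), and even then the second mechanism only applies when $(v_H)^F\neq 0$, i.e.\ when $v_H\notin E$. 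In the remaining case $0\neq v_H\in E\cap H$ your argument stalls: $w_n\in E$, so $w_n^F=0$ and no $F$-alignment ever occurs. This case cannot be ruled out a priori, and handling it is precisely where the dimension hypothesis $\dim E\leq\dim G$ must enter --- a hypothesis your argument never invokes.

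The paper avoids angles entirely and compares growth rates. For $u\in E$ write $u=u_G+u_H$ and then $u_H=u'_E+u'_F$; a direct estimate gives $\|Df^{k\ell}u\|\geq(1-2^{-k})^2\|Df^{k\ell}u'_F\|$, which together with $E\prec_\ell F$ forces $u'_F=0$, so $u_H\in E\cap H$ \emph{exactly}. The symmetric decomposition gives $G=(G\cap E)\oplus(G\cap F)$, and now $\dim E\leq\dim G$ yields either $E=G$ or $G\cap F\neq\{0\}$. In the latter case a nonzero pair $u\in E\cap H$, $w\in G\cap F$ satisfies both $\|Df^\ell u\|\leq\tfrac12\|Df^\ell w\|$ (from $E\prec F$) and $\|Df^\ell w\|\leq\tfrac12\|Df^\ell u\|$ (from $G\prec H$), a contradiction; hence $E\cap H=\{0\}$ and $E\subseteq G$. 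Your verification of the three-summand splitting from the inclusion is correct.
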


\begin{proof}
	We can, without loss of generality, choose $\ell \geq 1$ such that $E\prec_{\ell} F$ and $G\prec_{\ell} H$. Moreover, it follows from Remark~\ref{rmks} that $Df\mid_{H}$ is an isomorphism and $\ker(Df^m_{x_i})$ must be contained in $E(x_i)$ and $G(x_i)$, for each $(x_i)_i \in \Lambda$ and $m\geq 1$.
	
	To conclude the proof of the proposition it remains to show that if $u \in E(x_i)$ such that $Df^m(u)\neq 0$, for every $m\geq 1$, then $u \in G(x_i)$. Then, for every $u \in E(x_i)$, one can decompose $u=u_G+u_H$ in a unique way where $u_G \in G(x_i)$ and $u_H \in H(x_i)$. Analogously, one can decompose $u_H=u'_E+u'_F$ where $u'_E \in E(x_i)$ and $u'_F \in F(x_i)$. Then $u'_F$ must be zero. Otherwise, we get that
	\begin{align*}
	\|Df^{k\ell}(u)\|&\geq \|Df^{k\ell}(u_H)\|-\|Df^{k\ell}(u_G)\|\geq \left(1-\frac{1}{2^{k}}\right)\|Df^{k\ell}(u_H)\|\\
	&\geq \left(1-\frac{1}{2^{k}}\right) \left(\|Df^{k\ell}(u'_F)\|-\|Df^{k\ell}(u'_E)\| \right)\\
	&\geq \left(1-\frac{1}{2^{k}}\right)^2 \|Df^{k\ell}(u'_F)\|,
	\end{align*}
	implying that $\|Df^{k\ell}(u)\|$ and $\|Df^{k\ell}(u'_F)\|$ have the same growth which is impossible.
	Therefore, $u_H \in E(x_i)\cap H(x_i)$ and $u_G \in E(x_i)\cap G(x_i)$. Symmetrically, we deduce that if $v \in G(x_i)$ whose $v=v_E+v_F$ then $v_E \in G(x_i)\cap E(x_i)$ and $v_F \in G(x_i)\cap F(x_i)$. Moreover, since $\dim E(x_i)\leq \dim G(x_i)$, we have that either $G(x_i)=E(x_i)$ or $G(x_i)\cap F(x_i)\neq\{0\}$.
	
	Take non-zero vectors $u \in E(x_i)\cap H(x_i)$ and $v \in G(x_i)\cap F(x_i)$. Then, as $G \prec H$, we deduce that $\|Df^{\ell}(u)\|$ grows faster than $\|Df^{\ell}(v)\|$ which contradicts the fact that $E \prec F$. Thus, at least one of these intersection $E(x_i)\cap H(x_i)$ and $G(x_i)\cap F(x_i)$ is trivial. Thus, we obtain that $E(x_i)\cap H(x_i)=\{0\}$. It implies that $E(x_i)$ is contained $G(x_i)$. To conclude the proof, we should observe that $G(x_i)\cap F(x_i)$ is invariant and $E(x_i)\prec G(x_i)\cap F(x_i) \prec H(x_i)$.
\end{proof}

As directly consequence of the previous lemma, we conclude the first part of the uniqueness of the dominated splitting.

\begin{corollary}\label{cor:uniqueness}
	If $G\oplus H$ is a dominated splitting over $\Lambda$ for $f$ such that $\dim E=\dim G$ then $E(x_i)=G(x_i)$ for all $x_i$ in the orbit $(x_i)_i \in \Lambda$.
\end{corollary}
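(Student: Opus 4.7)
The plan is to apply Lemma~\ref{lemma:uniqueness} directly, with no additional machinery. Under the hypothesis $\dim E = \dim G$, the pointwise inequality $\dim E(x_i) \leq \dim G(x_i)$ holds trivially for every orbit $(x_i)_i \in \Lambda$, which is exactly the input required by the lemma. The lemma then yields $E(x_i) \subseteq G(x_i)$. Combined with the equality of dimensions, this forces $E(x_i) = G(x_i)$ pointwise along every orbit in $\Lambda$, which is precisely the statement of the corollary.

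A small sanity check to carry out is that Lemma~\ref{lemma:uniqueness} does not require strict inequality on dimensions; reading the argument, the key step produces a non-zero vector in $E(x_i) \cap H(x_i)$ (respectively $G(x_i) \cap F(x_i)$) and derives a contradiction using the incompatible growth rates coming from $E \prec F$ and $G \prec H$. This contradiction is purely dynamical and does not see the relative dimensions, so equality is an admissible input. Hence the containment $E(x_i) \subseteq G(x_i)$ is genuinely produced by the lemma in our situation.

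I do not expect any substantive obstacle: the corollary is a one-line consequence of Lemma~\ref{lemma:uniqueness}. If one wanted to avoid invoking the asymmetry of the lemma (where $E$ plays the role of the smaller-dimensional bundle), one could equally well apply it twice, swapping the roles of $E\oplus F$ and $G\oplus H$, to obtain the two inclusions $E(x_i) \subseteq G(x_i)$ and $G(x_i) \subseteq E(x_i)$; but under the equal-dimension hypothesis a single application suffices.
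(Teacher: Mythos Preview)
Your proposal is correct and matches the paper's approach exactly: the corollary is stated as a direct consequence of Lemma~\ref{lemma:uniqueness}, using that $\dim E = \dim G$ gives the inclusion $E(x_i)\subseteq G(x_i)$ and hence equality by dimension count. Your sanity check that the lemma allows equality of dimensions is also on point.
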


To complete the proof of Proposition~\ref{uniqueness}, we state the following.

\begin{lemma}\label{F:uniqueness}
	If $E\oplus F$ and $E\oplus H$ are dominated splittings over $\Lambda$ for $f$ then $F(x_i)=H(x_i)$ for all $x_i$ in the orbit $(x_i)_i$ in $\Lambda$.
\end{lemma}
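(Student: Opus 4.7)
The plan is to show $F(x_i)\subseteq H(x_i)$; once this is established, the equality $F(x_i)=H(x_i)$ follows because both bundles have the same dimension (they are both complements of $E(x_i)$ in $T_{x_i}M$). Throughout, fix some integer $\ell\geq 1$ and $\alpha>0$ such that both $E\prec_{\ell}F$ and $E\prec_{\ell}H$ hold and such that the uniform angle estimates $\varangle(E,F)\geq \alpha$ and $\varangle(E,H)\geq \alpha$ are valid along $\Lambda$. The angle bound for $E\oplus H$ yields a constant $C=C(\alpha)>0$ such that whenever $w=a+b$ with $a\in E(x_j)$ and $b\in H(x_j)$, we have $\|a\|,\|b\|\leq C\|w\|$.

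Next, I would exploit that $F$ is genuinely invariant: $Df\colon F(x_{j})\to F(x_{j+1})$ is an isomorphism for every $j$ (Remark~\ref{rmks}). Fix $(x_i)_i\in\Lambda$ and a nonzero $v\in F(x_i)$. Then for every $n\geq 1$ there is a unique $v^{-n}\in F(x_{i-n\ell})$ with $Df^{n\ell}(v^{-n})=v$, and $v^{-n}\neq 0$. Decompose $v^{-n}=a_{-n}+b_{-n}$ with $a_{-n}\in E(x_{i-n\ell})$ and $b_{-n}\in H(x_{i-n\ell})$. Since $Df$ preserves $E$ and is an isomorphism on $H$, and using uniqueness of the $E\oplus H$ decomposition at $x_i$, one obtains $v_E:=Df^{n\ell}(a_{-n})$ and $v_H:=Df^{n\ell}(b_{-n})$, where $v=v_E+v_H$ is the fixed $E\oplus H$-decomposition of $v$ (independent of $n$).

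The core of the argument is then to apply the iterated domination $E\prec_{\ell}F$: inductively, $\|Df^{n\ell}\!\mid_{E(x_{i-n\ell})}\|\leq 2^{-n}\,\m(Df^{n\ell}\!\mid_{F(x_{i-n\ell})})$. If $a_{-n}\neq 0$, applied to the unit vectors $a_{-n}/\|a_{-n}\|$ and $v^{-n}/\|v^{-n}\|$ this gives
\begin{equation*}
\frac{\|v_E\|}{\|a_{-n}\|}\;\leq\;2^{-n}\,\frac{\|v\|}{\|v^{-n}\|},
\end{equation*}
which combined with $\|a_{-n}\|\leq C\|v^{-n}\|$ yields $\|v_E\|\leq C\,2^{-n}\|v\|$. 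If $a_{-n}=0$ for some $n$, then $v^{-n}\in H$ and forward invariance of $H$ gives $v\in H(x_i)$ directly, so $v_E=0$. In either case, letting $n\to\infty$ forces $v_E=0$, hence $v\in H(x_i)$. This proves $F(x_i)\subseteq H(x_i)$.

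The main (mild) obstacle is bookkeeping: making sure that the $E\oplus H$-components of the backward orbit $(v^{-n})$ are correctly tracked forward to recover the fixed decomposition of $v$, and that the uniform angle is used to convert the ratio $\|a_{-n}\|/\|v^{-n}\|$ into a universal constant. With these two ingredients in place, the domination inequality does the rest immediately.
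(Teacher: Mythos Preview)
Your proof is correct and is actually cleaner than the paper's argument. The paper first treats separately orbits that avoid the critical set (where $Df$ is invertible, so one can reverse the domination and quote Corollary~\ref{cor:uniqueness}), and then for the general case pulls back $u\in F(x_0)$ simultaneously along $F$ and $H$, restricts everything to the two-dimensional spaces $V_i=\mathrm{span}\{w_i,v_i\}$, and reruns the growth-rate contradiction of Lemma~\ref{lemma:uniqueness} for the sequence of $2\times 2$ maps $A_i$.

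Your approach avoids the case split and the reduction to planes: pulling back only along $F$ (where $Df$ is always an isomorphism), decomposing each preimage in $E\oplus H$, and pushing forward again uses nothing more than $Df(E)\subseteq E$, $Df(H)=H$, the uniform angle of $E\oplus H$ (to turn $\|a_{-n}\|/\|v^{-n}\|$ into a constant), and the single domination $E\prec_\ell F$. The payoff is a one-line exponential estimate $\|v_E\|\leq C\,2^{-n}\|v\|$. Note in particular that you never use $E\prec_\ell H$ in the core step; fixing a common $\ell$ is convenient but not essential. The paper's route, by contrast, makes the structural link to Lemma~\ref{lemma:uniqueness} explicit, at the cost of some extra bookkeeping with the $2$-dimensional subspaces.
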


\begin{proof}
	First, we assume that $(x_i)_i \in \Lambda$ with $x_i \notin \mathrm{Cr}(f)$ for all $i \in \mathbb{Z}$. Then, since $Df_{x_i}:T_{x_i}M \to T_{x_{i+1}}M$ is an isomorphism, one has that for every unit vectors $u \in E(x_i)$ and $v \in F(x_i)$ 
	\begin{align*}
	\|Df^{-\ell}(u)\|\geq \frac{1}{\|Df^{\ell}\mid_{E}\|}\geq 2\frac{1}{\m(Df^{\ell}\mid_{F})}\geq 2\|Df^{-\ell}(v)\|.
	\end{align*}
	In particular, $F\prec_{\ell} E$ for $Df^{-1}$ along of the orbit $(x_i)_i$ and, similarly, $H\prec_{\ell} E$ for $Df^{-1}$.
	Therefore, applying Corollary~\ref{cor:uniqueness}, one concludes that $F(x_i)=H(x_i)$ since $\dim F=\dim H$.
	
	In the general case, for a non-zero vector $u \in F(x_i)$ one deduces that there are two vectors $w \in E(x_i)$ and $v \in H(x_i)$, with $v\neq 0$, such that $u=w+v$. Assume without loss of generality that $i=0$. Then, using that $Df\mid_F$ and $Df\mid_H$ are isomorphisms, we can find the sequences $(u_j)_{j\leq 0}\subseteq F\backslash\{0\}, \, (w_i)_{j\leq 0} \subseteq E$, and $(v_j)_{j\leq 0} \subseteq H\backslash\{0\}$ such that $u_j=w_j+v_j$ and $Df(u_{j-1})=u_j$. Note that if $Df^m(w_{j-1})=0$, for some $1\leq m\leq |j|$, there is nothing to be proved since it direct implies that $u \in H(x_j)$. Hence, we can assume that $Df(\ast_{i-1})=\ast_i$ where $\ast=u,v,\, \text{and}\ \ w$ and $\ast_0=\ast$. 
	
	Denote $V_i=\mathrm{span}\{w_i,v_i\}$ the span of $w_i,v_i$; and define  $A_i:V_{i-1} \to V_i$ as the restriction of $Df_{x_{i-1}}$ to $V_{i-1}$, for each $i\leq 0$. Observe that defining $A_i^n:V_i \to V_{i+n}$ by $A_i^n=A_{i+n}^{-1}\cdots A_i^{- 1}$ for every $n \leq 0$, we can repeat the previous arguments in order to show that $V_i$ admits two dominated splittings for $A_i^{-1}$, which are $V_i=\mathrm{span}\{u_i\}\oplus \mathrm{span}\{w_i\}$ with $\mathrm{span}\{u_i\}\prec_{\ell} \mathrm{span}\{w_i\}$, and
	$V_i=\mathrm{span}\{v_i\}\oplus \mathrm{span}\{w_i\}$ with $\mathrm{span}\{v_i\}\prec_{\ell} \mathrm{span}\{w_i\}$ since $u_i \in V_i$ and the following holds
	$$\|A_i\mid_{\mathrm{span}\{w_i\}}\|\leq \|Df\mid_{E}\|,  \ \ \|A_i\mid_{\mathrm{span}\{u_i\}}\|\geq \m(Df\mid_{F}), \ \ \text{and} \ \ \|A_i\mid_{\mathrm{span}\{v_i\}}\|\geq \m(Df\mid_{H}).$$
	
	Therefore, we repeat the argument in proof of Lemma~\ref{lemma:uniqueness} to get that $v_0$ and $u_0$ are parallel. This completes the proof.
\end{proof}

We can use the uniqueness of the dominated splitting to show the continuity and extend it to the closure.

\begin{prop}\label{cont-ext}
	The subbundles $E$ and $F$ depend continuously\footnote{Continuity in this means that when considering local coordinates so that the tangent bundle becomes trivial, the bundles depend on the point continuously as subspaces of $\mathbb{R}^d$.} with the point $(x_i)_i \in \Lambda$ and the closure $\overline{\Lambda}$ in $M_f$ admits a dominated splitting which coincides with $E\oplus F$ in $\Lambda$. 
\end{prop}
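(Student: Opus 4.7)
The plan is to establish continuity on $\Lambda$ and the extension to $\overline{\Lambda}$ simultaneously through a compactness-plus-uniqueness argument. Fix any $(x_i)_i \in \overline{\Lambda}$ and any sequence $(x_i^n)_i \in \Lambda$ converging to it in $M_f$. Using smooth local trivializations of $TM$ near each $x_i$ and the compactness of $\mathrm{Grass}_{\kappa}(T_{x_i}M)$ and $\mathrm{Grass}_{d-\kappa}(T_{x_i}M)$, a diagonal extraction over $i\in\mathbb{Z}$ produces subspaces $\widetilde{E}(x_i), \widetilde{F}(x_i)$ with $E(x_i^n) \to \widetilde{E}(x_i)$ and $F(x_i^n) \to \widetilde{F}(x_i)$ along a common subsequence, with dimensions preserved.

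I would then check that $\widetilde{E}\oplus \widetilde{F}$ fulfils the items of Definition~\ref{defi:DS}. Continuity of the angle propagates the bound $\varangle(\widetilde{E}(x_i),\widetilde{F}(x_i))\geq \alpha$ to the limit, which combined with the preserved dimensions yields the direct sum $T_{x_i}M = \widetilde{E}(x_i) \oplus \widetilde{F}(x_i)$. Continuity of $Df$ gives both $Df(\widetilde{E}(x_i)) \subseteq \widetilde{E}(x_{i+1})$ and the inclusion $Df(\widetilde{F}(x_i)) \subseteq \widetilde{F}(x_{i+1})$. Since the maps $V\mapsto \|Df^{\ell}|_V\|$ and $V\mapsto \mathfrak{m}(Df^{\ell}|_V)$ are continuous on the Grassmannian (the conorm being the minimum of a continuous function over a compactly varying unit sphere), the domination $\|Df^{\ell}|_{\widetilde{E}}\|\leq \tfrac12 \mathfrak{m}(Df^{\ell}|_{\widetilde{F}})$ also transfers.

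The main obstacle is securing the equality in $F$-invariance, $Df(\widetilde{F}(x_i)) = \widetilde{F}(x_{i+1})$. By dimension count this reduces to showing $\widetilde{F}(x_i) \cap \ker(Df_{x_i}) = \{0\}$: otherwise a nonzero $v$ in the intersection would force $\mathfrak{m}(Df^{\ell}|_{\widetilde{F}(x_i)}) = 0$, hence by domination $\widetilde{E}(x_i) \subseteq \ker(Df^{\ell}_{x_i})$, and the uniform angle (keeping $v$ outside $\widetilde{E}$) would give $\dim\ker(Df^{\ell}_{x_i}) \geq \kappa+1$. To rule this out I would invoke the cone criterion from \S\ref{sec:weak-hyp}: the invariant cones $\mathcal{C}^E, \mathcal{C}^F$ are defined by continuous functions on all of $TM$ and their invariance under $Df^{\ell}$ is a closed condition, hence extends from $\Lambda$ to $\overline{\Lambda}$. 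Reconstructing $\widetilde{F}$ as the maximal $Df^{\ell}$-invariant family obtained by iterating $Df^{\ell}$ along the backward orbit inside $\mathcal{C}^F$ produces a $\widetilde{F}$ transverse to $\ker(Df^m)$, so the analogue of Remark~\ref{rmks} persists at every point of $\overline{\Lambda}$ and the equality in the $F$-invariance follows by dimension count.

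Finally, having verified that $\widetilde{E}\oplus\widetilde{F}$ is a dominated splitting on $\overline{\Lambda}$ of the same index as $E\oplus F$ and agreeing with it on $\Lambda$, Proposition~\ref{uniqueness} ensures that the limit does not depend on the chosen subsequence. Hence the full sequences $E(x_i^n)$ and $F(x_i^n)$ converge to $\widetilde{E}(x_i)$ and $\widetilde{F}(x_i)$, which simultaneously delivers the continuity of $E,F$ at every point of $\Lambda$ and the promised extension to $\overline{\Lambda}$.
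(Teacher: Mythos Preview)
Your overall strategy---extract subsequential limits in the Grassmannians, verify that the limiting pair inherits the angle bound and the domination inequality by continuity, and then invoke Proposition~\ref{uniqueness} to conclude that the limit is independent of the subsequence---is exactly the ``standard argument'' the paper alludes to. The paper's own proof consists of a single sentence deferring to \cite{Crovisier-Potrie}, so at the level of architecture you are doing precisely what is intended, only in more detail.

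You are also right to single out the equality $Df(\widetilde F(x_i))=\widetilde F(x_{i+1})$ as the one nontrivial point in the endomorphism setting, and your reduction of it to $\widetilde F(x_i)\cap\ker(Df^{\ell}_{x_i})=\{0\}$ is clean. However, the resolution you propose has a gap. The cone fields $\mathscr{C}_E,\mathscr{C}_E^{\ast}$ in \S\ref{subsec:cone-criterion} are \emph{built from the subbundle $E$}; they are not ``continuous functions on all of $TM$'' independent of the splitting. So to write down a cone at a point of $\overline\Lambda\setminus\Lambda$ you must already have extended $E$, and to reconstruct $\widetilde F$ from the cone via Claim~\ref{subbundle:F} (or Proposition~\ref{prop:conedom}) you need the hypothesis $\ker(Df^n_{x_i})\cap\mathscr{C}^{\ast}(x_i)=\{0\}$, which is equivalent to $\ker(Df^n_{x_i})\subseteq\widetilde E(x_i)$---precisely the conclusion you are after. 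The argument is circular.

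Your earlier observation is actually the right lever: if $\widetilde F(x_i)$ met $\ker(Df^{\ell}_{x_i})$ nontrivially, then domination forces $\widetilde E(x_i)\subseteq\ker(Df^{\ell}_{x_i})$ and hence $\dim\ker(Df^{\ell}_{x_i})\geq\kappa+1$. This \emph{is} a contradiction in every place the paper applies Proposition~\ref{cont-ext}, because there one has the standing bound $\dim\ker(Df^m)\leq\kappa$ from \eqref{eq:kappa}. If you want to close the argument at the generality in which the proposition is stated, you should either note explicitly that this kernel bound is being used (it holds on the projection of $\Lambda$ by Remark~\ref{rmks}, and you must argue it persists on the closure), or avoid the issue altogether by working directly with the uniform lower bound on $\mathfrak m(Df^{\ell}\mid_{F})$ that the domination and angle conditions provide along the approximating orbits.
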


\begin{proof}
	The proof is a standard argument once we have shown the uniqueness of the dominated splitting and the uniformity of the angle. This can be found in \cite{Crovisier-Potrie}.
\end{proof}

\begin{remark}\label{proj-E}
	It is well-known and follows from the proof of uniqueness that if  $E\oplus F$ is a dominated splitting, then the subbundle $E$ only depends of the forward orbits. That is, for all points $(x_i)_i$ and $(y_i)_i$ in $\Lambda$, one has that 
	\begin{align}\label{eq-E}
	E(x_i)=E(y_i), \forall i \geq 0, \ \ \text{whenever} \ \ x_0=y_0.
	\end{align}
	In particular, by Propositions \ref{uniqueness} and \ref{cont-ext} together with \eqref{eq-E}, the subbundle $E$ of $TM_f$ induces an invariant continuous subbundle of $TM$ which will be also denoted by $E$.
\end{remark}

%%%%%%%%%%%%%%%%%%%%%%%%%%%%%%%%%%%%%%%%%%%%%%%%%%%%%%%%%%%%%%%%%%%%%%%%%%%%%%%%%%%%%%%%%%%%%%%%%%%%%%%%%%%%%%%%%%%%%%%%%%%%%%%%%%%%%%%%%%%%%%%%%%%%%%%%%%%%%%%%%%%%%%%%%%%%%%%%%%%%%%%%%%%%%%%%%%%%%%%%%%%%%%%%%%%%%%%%%%%%%%%%%%%%%%%%%%%%%%%%%%%%%%%%%%%%%%%%%%%%%%%%%%%%%%%%%%%%%%%%%%%%%%%%%%%%%%%%%%%%%%%%%%%%%%%%%%%%%%%%%%%%%%%%%%%%%%%

\subsection{Cone-criterion}\label{subsec:cone-criterion}

In this section, we will show that the existence of dominated splitting for endomorphisms displaying critical points can also be characterized in terms of cone fields as well as in the diffeomorphisms context.

A \textit{cone-field $\mathscr{C}$ of dimension $k$ on $M$} is a continuous family of convex closed non-vanishing cones $\mathscr{C}(x)$ in $T_xM$ such that the subspaces of maximal dimension inside of $\mathscr{C}(x)$ are $k$-dimensional. The closure of the complement of $\mathscr{C}(x)$ in $T_xM$ is a cone of dimension $d-k$, called dual cone at $x$ and denoted by $\mathscr{C}^{\ast}(x)$. We  will be called \textit{dual cone-field of $\mathscr{C}$} the family $\mathscr{C}^{\ast}=\{\mathscr{C}^{\ast}(x):x \in M\}$. We say that a cone-field $\mathscr{C}$ is invariant by an endomorphism $f$ if there is an integer $\ell>0$ such that
$Df^{\ell}(\mathscr{C}(x))$ is contained in $\mathrm{int}(\mathscr{C}(f^{\ell}(x)))\cup\{0\}$, where $\mathrm{int}(\mathscr{C}(x))$ denotes the interior of $\mathscr{C}(x)$ in $T_xM$. We also can say that $\mathscr{C}$ is $\ell$-invariant if we want to emphasize the role of $\ell$.

We next state an equivalent notion of dominated splitting for endomorphisms $f:M \to M$ exhibiting critical points.

\begin{prop}\label{prop:domcone}
	Let $E\oplus F$ be a dominated splitting for $f$ with $\kappa=\dim E$. Then there is an $(d-\kappa)$-dimensional invariant cone-field $\mathscr{C}$ such that $E(x)\cap \mathscr{C}(x)=\{0\}$ for each $x \in M$.
\end{prop}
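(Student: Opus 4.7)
By Remark~\ref{proj-E}, the $\kappa$-dimensional subbundle $E$ descends to a continuous subbundle of $TM$, which I still denote by $E$. Let $\alpha>0$ be the uniform angle between $E$ and $F$ on $M_{f}$, and let $\ell_{0}\geq 1$ be an integer with $E\prec_{\ell_{0}}F$. I fix $\eta>0$ with $\eta\sin\alpha>\cos\alpha$ (i.e.\ $\eta>\cot\alpha$) and define
\[
\mathscr{C}(x)=\bigl\{v\in T_{x}M:\|P_{E(x)}(v)\|\leq\eta\,\|P_{E(x)^{\perp}}(v)\|\bigr\},
\]
where $P_{E(x)}$ and $P_{E(x)^{\perp}}$ are the orthogonal projections in $T_{x}M$. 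Since $E$ is continuous on $M$, this family is a continuous cone-field; it contains $E(x)^{\perp}$, and if $v\in E(x)\cap\mathscr{C}(x)$ then $P_{E(x)^{\perp}}(v)=0$ forces $v=0$. Thus the maximal linear subspace inside $\mathscr{C}(x)$ is exactly $E(x)^{\perp}$, of dimension $d-\kappa$, and $E(x)\cap\mathscr{C}(x)=\{0\}$.

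The content is the invariance. Given $v\in\mathscr{C}(x)$, pick any orbit $(x_{i})_{i}\in M_{f}$ with $x_{0}=x$ (possible because $M_{f}$ projects onto $M$) and decompose $v=u+w$ with $u\in E(x)$ and $w\in F(x)$, using the invariant splitting along this orbit. Since $u\in E(x)$ we have $P_{E(x)^{\perp}}(v)=P_{E(x)^{\perp}}(w)$, and the uniform angle $\varangle(w,E(x))\geq\alpha$ gives $\|P_{E(x)^{\perp}}(w)\|\geq\sin\alpha\,\|w\|$ while $\|P_{E(x)}(w)\|\leq\cos\alpha\,\|w\|$. Combining with $v\in\mathscr{C}(x)$ and the triangle inequality for $u=P_{E(x)}(v)-P_{E(x)}(w)$, one obtains
\[
\|u\|\leq\|P_{E(x)}(v)\|+\|P_{E(x)}(w)\|\leq\eta\,\|P_{E(x)^{\perp}}(w)\|+\cos\alpha\,\|w\|\leq(\eta+1)\,\|w\|.
\]

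Now I apply $Df^{\ell}$ with $\ell=k\ell_{0}$. By invariance of the splitting, $Df^{\ell}(u)\in E(x_{\ell})$ and $Df^{\ell}(w)\in F(x_{\ell})$ with $x_{\ell}=f^{\ell}(x)$. Iterating $E\prec_{\ell_{0}}F$ yields $\|Df^{\ell}(u)\|\leq 2^{-k}(\|u\|/\|w\|)\|Df^{\ell}(w)\|\leq 2^{-k}(\eta+1)\|Df^{\ell}(w)\|$. Decomposing $Df^{\ell}(v)$ in $E(x_{\ell})\oplus E(x_{\ell})^{\perp}$ and again using the uniform angle $\varangle(Df^{\ell}(w),E(x_{\ell}))\geq\alpha$, I get
\[
\|P_{E(x_{\ell})}(Df^{\ell}(v))\|\leq\bigl(2^{-k}(\eta+1)+\cos\alpha\bigr)\|Df^{\ell}(w)\|,\qquad \|P_{E(x_{\ell})^{\perp}}(Df^{\ell}(v))\|\geq\sin\alpha\,\|Df^{\ell}(w)\|.
\]
Choosing $k$ large enough so that $2^{-k}(\eta+1)+\cos\alpha<\eta\sin\alpha$ (feasible precisely because $\eta>\cot\alpha$), the ratio of the two quantities is strictly smaller than $\eta$, so $Df^{\ell}(v)\in\mathrm{int}(\mathscr{C}(f^{\ell}(x)))$, as required.

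\textbf{Main obstacle.} The main point to be careful about is that a base point $x\in M$ may lie on several distinct orbits of $M_{f}$, each producing a different subspace $F(x)$; but $E(x)$ is the same for all of them, the cone $\mathscr{C}(x)$ uses only $E(x)$, and in the estimate above the only quantities that intervene are the universal constants $\alpha$, $\ell_{0}$ and the chosen $\eta$. The delicate quantitative point is the choice $\eta>\cot\alpha$: the cone must be opened wide enough to contain every $F$ uniformly, otherwise the image of a vector close to some $F(x)$ could land outside $\mathscr{C}(f^{\ell}(x))$ no matter how large $k$ is taken. This is exactly the role played by the uniform angle property in Definition~\ref{defi:DS}.
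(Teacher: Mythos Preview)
Your argument is correct and follows essentially the same route as the paper: define the cone around $E^{\perp}$ using only the subbundle $E$ (which descends to $TM$), open it wide enough so that the uniform angle forces every $F$ inside with margin, decompose a cone vector along $E\oplus F$, and use iterated domination to push the $E$-component to zero relative to the $F$-component. The paper phrases the cone via the angle $\varangle(\mathbb{R}u,E(x))\geq\eta$ while you use the equivalent projection ratio $\|P_{E}\|\leq\eta\|P_{E^{\perp}}\|$; after that the computations match. One small wording slip: the maximal linear subspace inside $\mathscr{C}(x)$ is not \emph{exactly} $E(x)^{\perp}$ (there are many $(d-\kappa)$-dimensional subspaces in the cone), but your actual conclusion---that the cone has dimension $d-\kappa$---is correct.
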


In the proof, we will first define the cone-field and, then, will prove that it is invariant and transversal to the kernel.

By Remark~\ref{proj-E}, $E$ is an invariant continuous subbundle of $TM$ and, by Remark~\ref{rmks}, we have that $\ker(Df^n_x)$ is contained in $E(x)$ for all $x \in M$. Then, for every $x \in M$, we can define a \textit{cone-field} $\mathscr{C}_{E}$ on $M$ of dimension $\kappa$ and length $\eta >0$ by:
\begin{align}
\mathscr{C}_{E}(x,\eta)=\{(u_1,u_2) \in E(x)\oplus E(x)^{\perp}:\|u_2\|\leq \eta \|u_1\|\}.
\end{align}

Note that $u=(u_1,u_2) \in E(x)\oplus E^{\perp}(x)$ satisfies that
$$\tan \varangle(\mathbb{R}u,E^{\perp}(x))\leq \tan \varangle(u,u_2)=\dfrac{\|u_1\|}{\|u_2\|},$$
then, using that cosine is decreasing on $[0,\pi/2]$, the cone-field can be rewritten as:
\begin{align*}
\mathscr{C}_{E}(x,\eta)=\{u \in E(x)\oplus E(x)^{\perp}:\varangle(\mathbb{R}u,E^{\perp}(x))\geq \arctan \eta^{-1}\}.
\end{align*}

From now on, since $\arctan : (0,+\infty) \to (0,\pi/2)$ is a homeomorphism preserving the orientation, we will make an abuse of notation rewriting, for each $\eta \in (0,\pi/2)$,
\begin{align}
\mathscr{C}_{E}(x,\eta)=\{u \in E(x)\oplus E(x)^{\perp}:\varangle(\mathbb{R}u,E^{\perp}(x))\geq \pi/2-\eta\}.
\end{align}

Recall that the dual cone-field of $\mathscr{C}_{E}(x,\eta)$ is the closure of $T_xM\backslash \mathscr{C}_{E}(x,\eta)$ which is $(d-\kappa)$-dimensional cone-field and will be written as
\begin{align}
\mathscr{C}_{E}^{\ast}(x,\eta)=\{u \in E(x)\oplus E(x)^{\perp}: \varangle(\mathbb{R}u,E(x))\geq \eta\}.
\end{align}

It is clear that $E(x) \subseteq \mathscr{C}_E(x,\eta)$, for each $x \in M$ and $\eta \in (0,\pi/2)$. Thus, we can conclude that $\ker(Df_x^n)\cap \mathscr{C}_E^{\ast}(x,\eta)=\{0\}$ for each $x \in M$ and $n\geq 1$, since $\ker(Df_x^n) \subseteq E(x)$.

Therefore, to conclude the proof of Proposition~\ref{prop:domcone}, we state the following.

\begin{claim*}
	There is $\alpha>0$ such that the dual cone-field $\mathscr{C}_E^{\ast}=\{\mathscr{C}_E^{\ast}(x,\alpha):x\in M\}$ is invariant.
\end{claim*}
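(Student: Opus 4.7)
The plan is to find $\alpha > 0$ and an iterate $\ell' = k_0 \ell$ such that for every $x \in M$ and every nonzero $u \in \mathscr{C}_E^{\ast}(x,\alpha)$ one has $Df^{\ell'}(u) \in \mathrm{int}(\mathscr{C}_E^{\ast}(f^{\ell'}(x),\alpha))$. The underlying idea is that under forward iteration, domination forces any vector transverse to $E$ to align with $F$, and $F$ sits comfortably inside the dual cone thanks to the uniform angle $\varangle(E,F) \geq \alpha_0 > 0$.

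The first task is to translate between the orthogonal decomposition $T_xM = E(x) \oplus E(x)^{\perp}$ (used to define the cone) and the dynamical decomposition $T_xM = E(x) \oplus F(x)$ (on which domination acts cleanly). Since $\varangle(E,F) \geq \alpha_0$ uniformly, the oblique projections along the splitting $E\oplus F$ have operator norm at most $1/\sin\alpha_0$, so for any nonzero $v = v_E + v_F$ with $v_E\in E(x)$, $v_F\in F(x)$ one has $\max\{\|v_E\|,\|v_F\|\} \leq \|v\|/\sin\alpha_0$. In the other direction, writing $\pi_{E^{\perp}}$ for the orthogonal projection onto $E(x)^{\perp}$, one has $\pi_{E^{\perp}}(v) = \pi_{E^{\perp}}(v_F)$, so $\|\pi_{E^{\perp}}(v)\| \leq \|v_F\|$; if moreover $v \in \mathscr{C}_E^{\ast}(x,\alpha)$, then $\|\pi_{E^{\perp}}(v)\| \geq \sin(\alpha)\|v\|$, giving $\|v_F\| \geq \sin(\alpha)\|v\|$ and hence $\|v_E\|/\|v_F\| \leq 1/(\sin\alpha\,\sin\alpha_0) =: C(\alpha)$. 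Conversely, a short computation shows that if $\|v_E\| \leq \epsilon \|v_F\|$ for $\epsilon$ small enough in terms of $\alpha_0$, then $\mathbb{R}v$ is so close to $F$ that $\varangle(\mathbb{R}v, E) \geq \alpha_0/2$; denote by $\epsilon_0 = \epsilon_0(\alpha,\alpha_0) > 0$ the resulting threshold.

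Fix any $\alpha \in (0, \alpha_0/2)$ and let $C = C(\alpha)$. Given a nonzero $u \in \mathscr{C}_E^{\ast}(x,\alpha)$, decompose $u = u_E + u_F$; the first part of the previous paragraph guarantees $u_F \neq 0$ and $\|u_E\| \leq C\|u_F\|$. Because $E$ is forward-invariant and $F$ is invariant (Remark~\ref{rmks}), $Df^{k\ell}(u) = Df^{k\ell}(u_E) + Df^{k\ell}(u_F)$ respects the splitting, and iterating $E \prec_\ell F$ yields, for $u_E \neq 0$,
$$
\frac{\|Df^{k\ell}(u_E)\|}{\|u_E\|} \leq 2^{-k}\,\frac{\|Df^{k\ell}(u_F)\|}{\|u_F\|},
$$
which is trivially true if $u_E = 0$. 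Combined with $\|u_E\| \leq C\|u_F\|$, this gives $\|Df^{k\ell}(u_E)\| \leq 2^{-k} C\,\|Df^{k\ell}(u_F)\|$. Choose $k_0$ so that $2^{-k_0} C \leq \epsilon_0$; this choice is uniform in $x$ and $u$. The second part of the previous paragraph then places $Df^{k_0\ell}(u)$ inside $\mathrm{int}(\mathscr{C}_E^{\ast}(f^{k_0\ell}(x), \alpha_0/2)) \subseteq \mathrm{int}(\mathscr{C}_E^{\ast}(f^{k_0\ell}(x), \alpha))$, proving the claim with $\ell' = k_0\ell$.

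The main technical obstacle is the bookkeeping between the two decompositions of $T_xM$: the cones are naturally phrased in the orthogonal one, while the estimates coming from $E \prec_\ell F$ are naturally phrased in the dynamical one. The bridge between them is precisely the uniform angle bound $\varangle(E,F) \geq \alpha_0$, which is exactly why this condition is built into Definition~\ref{defi:DS}; Example~\ref{ex} illustrates the kind of degeneration that occurs if it is dropped.
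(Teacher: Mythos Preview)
Your proof is correct and follows essentially the same route as the paper's: both arguments use the uniform angle bound $\varangle(E,F)\geq \alpha_0$ to control the oblique decomposition $w=w_E+w_F$ of a vector in the dual cone, then apply the domination $E\prec_\ell F$ to shrink $\|Df^{k\ell}(w_E)\|/\|Df^{k\ell}(w_F)\|$ exponentially, and finally choose $k$ large enough so that the image lies in the interior of the dual cone. Your write-up is somewhat more explicit about the translation between the orthogonal and the dynamical splittings, but the content is the same.
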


Indeed, since the angle between $E$ and $F$ is bounded away from zero, there is a number $\alpha>0$ small enough satisfying $\varangle(F(x_i),E(x_i))\geq 2\alpha$, for each $i \in\mathbb{Z}$ in all the orbit $(x_i)_i$. In particular, we have that the direction $F(x_0)$ associated to any orbit $(x_i)_i$ with $x_0=x$ is contained in $\mathscr{C}_E^{\ast}(x,\alpha)$. Moreover, there is a constant $c>0$ (depending only on $\alpha$) such that any decomposition of a vector $w \in \mathscr{C}_E^{\ast}(x,\alpha)$ as $w=w_E+w_F$ with $w_E \in E(x)$ and $w_F$ in direction $F$ at $x$ satisfies $\|w_F\|\geq c\|w_E\|$. Otherwise, $w$ cannot belong to $\mathscr{C}_E^{\ast}(x,\alpha)$. Thus, we obtain that
\begin{align*}
\|Df^{k\ell}(w_E)\|&\leq \|Df^{k\ell}\mid_{E}\|\|w_E\|\leq\left(\dfrac{1}{2}\right)^k\m(Df^{k\ell}\mid_{F})\|w_E\|\\
&\leq \left(\dfrac{1}{2}\right)^k \|Df^{k\ell}(w_F)\|\dfrac{\|w_E\|}{\|w_F\|}\leq
\left(\dfrac{1}{2}\right)^k c^{-1} \|Df^{k\ell}(w_F)\|
\end{align*}
and, hence, we can take $k>0$ large enough such that $\mathscr{C}_E^{\ast}(x,\alpha)$ is $k$-invariant for every $x \in M$. Take $\mathscr{C}$ as the dual cone-field $\mathscr{C}_E^{\ast}$, we conclude the proof.  

Conversely, we will show that the existence of an invariant cone-field transversal to the kernel is sufficient to get a dominated splitting. More precisely, we state the following.

\begin{prop}\label{prop:conedom}
	If $\mathscr{C}$ is an invariant $(d-\kappa)$-dimensional cone-field by $f$ such that $\ker(Df_x^n)\cap \mathscr{C}(x)=\{0\}$ for each $x \in M$ and $n\geq 1$, then there exist two nontrivial subbundles $E$ and $F$ on $M_f$ such that $E\oplus F$ is a dominated splitting for $f$, which $E$ is $\kappa$-dimensional.
\end{prop}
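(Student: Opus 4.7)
The plan is to adapt the standard diffeomorphism cone-criterion to the endomorphism setting by building $F$ on the inverse limit $M_f$ and $E$ on $M$. Fix an integer $\ell\geq 1$ such that $Df^{\ell}(\mathscr{C}(x))\subseteq \mathrm{int}(\mathscr{C}(f^{\ell}(x)))\cup\{0\}$ for every $x\in M$. The hypothesis $\ker(Df^n_x)\cap \mathscr{C}(x)=\{0\}$ is equivalent to saying that $Df^{n}$ is injective on $\mathscr{C}(x)$ and that $\ker(Df^n_x)\subseteq \mathscr{C}^{\ast}(x)$ for all $n\geq 1$, a fact I will use repeatedly.

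For $F$, define on $M_f$
\[
F(x_0):=\bigcap_{n\geq 0} Df^{n\ell}\bigl(\mathscr{C}(x_{-n\ell})\bigr).
\]
Each set $Df^{n\ell}(\mathscr{C}(x_{-n\ell}))$ is the image under an injective linear map of a $(d-\kappa)$-dimensional convex closed cone, hence itself a closed cone containing a maximal linear subspace of dimension $d-\kappa$. By strict $\ell$-invariance these sets are nested decreasingly in $\mathrm{int}(\mathscr{C}(x_0))\cup\{0\}$, and a standard cone-contraction argument (for instance on $\mathrm{Grass}_{d-\kappa}(T_{x_0}M)$ with the distance from \eqref{dist}) shows that the intersection is a single $(d-\kappa)$-dimensional linear subspace lying in $\mathrm{int}(\mathscr{C}(x_0))\cup\{0\}$. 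The identity $Df(F(x_i))=F(x_{i+1})$ and continuity of $F$ in $(x_i)_i$ are immediate from the construction.

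For $E$, I define on $M$
\[
E(x):=\bigl\{v\in T_xM:\; Df^{n\ell}(v)\in \mathscr{C}^{\ast}(f^{n\ell}(x)) \text{ for every } n\geq 0\bigr\},
\]
which only uses the (unique) forward orbit of $x$. It is automatic that $E$ is forward $Df$-invariant and $\ker(Df^{n\ell}_x)\subseteq E(x)$ for every $n$. For any orbit $(x_i)_i$ with $x_0=x$ one has
\[
E(x)\cap F(x_0)\subseteq \mathscr{C}^{\ast}(x)\cap \bigl(\mathrm{int}(\mathscr{C}(x))\cup\{0\}\bigr)=\{0\}.
\]
Now running a dual cone-contraction argument along the forward orbit of $x$---exploiting the fact that $Df^{\ell}(v)\in \mathscr{C}^{\ast}(f^{\ell}(x))$ forces $v\in \mathscr{C}^{\ast}(x)$ by the strict $\ell$-invariance of $\mathscr{C}$---produces inside $E(x)$ a $\kappa$-dimensional linear subspace. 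Combined with the trivial intersection with $F(x_0)$ (and $\dim F(x_0)=d-\kappa$), this forces $T_xM=E(x)\oplus F(x_0)$ and $\dim E(x)=\kappa$, so $E(x)$ is the whole candidate subbundle.

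Domination and the uniform angle then come from compactness of $M$. The uniform strict inclusion $Df^{\ell}(\mathscr{C})\subseteq \mathrm{int}(\mathscr{C})$ gives, after a finite iterate $N\ell$, a uniform ratio below $1/2$ between the growth rates of $Df^{N\ell}$ on $E$ (contained in $\mathscr{C}^{\ast}$) and on $F$ (contained in $\mathrm{int}(\mathscr{C})$), yielding $E\prec_{N\ell} F$; the continuous positive gap between $\mathscr{C}^{\ast}$ and $\mathrm{int}(\mathscr{C})$ on a compact base gives the required $\alpha>0$. The main obstacle is the $\kappa$-dimensionality of $E(x)$: unlike the diffeomorphism case, one cannot take preimages along a genuine backward orbit, and the dual cone $\mathscr{C}^{\ast}$ is in general non-convex when $\kappa\geq 2$, so $E(x)$ is not a priori a linear subspace even though it is closed under scalings. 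Recovering the subspace structure requires a graph-transform argument playing $E$ against the already-built $F$: namely, showing that $E(x)$ is the unique graph over $F(x_0)^{\perp}$ contained in $\mathscr{C}^{\ast}(x)$ and forward-invariant under $Df$, which is the delicate technical step of the proof.
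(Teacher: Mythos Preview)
Your construction of $F$ as the nested intersection of cone images along a backward orbit is exactly the paper's (Claim~\ref{subbundle:F}).

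The gap is in $E$. You define $E(x)=\{v:Df^{n\ell}(v)\in\mathscr{C}^{\ast}(f^{n\ell}(x))\text{ for all }n\geq 0\}$ and argue that a dual cone-contraction yields a $\kappa$-plane inside $E(x)$, and that $E(x)\cap F(x_0)=\{0\}$ then forces $\dim E(x)=\kappa$. But that dimension count is valid only once $E(x)$ is already known to be a linear subspace, which is precisely the point in doubt: for $\kappa\geq 2$ the dual cone is non-convex, so the sum of two vectors in $E(x)$ need not lie in $E(x)$ a priori, and the standard Grassmannian contraction argument does not apply to $\mathscr{C}^{\ast}$. The graph-transform you invoke at the end can be made to work (the preimage of a $\kappa$-plane transverse to $F$ is again a $\kappa$-plane transverse to $F$, since $\ker(Df^{\ell})\cap F=\{0\}$), but to show it is a \emph{contraction} one needs exactly the quantitative estimate you defer to the end as ``domination from compactness'': that any vector whose iterates remain outside a cone about $F$ is exponentially dominated by $F$. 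This is not a soft compactness fact; it is the main technical lemma, and it must precede the construction of $E$, not follow it.

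The paper proceeds in that order and by a different mechanism. Using only $F$ and the cone hypothesis it proves Lemma~\ref{lemma:gap}: unit vectors whose iterates stay outside a cone $\mathcal{K}(\cdot,\delta)$ centered on $F$ satisfy $\|Df^{N\ell}(w)\|\leq\lambda\,\m(Df^{N\ell}|_{F})$. This yields a uniform singular-value gap $\sigma_{\kappa}(x,n)<c\lambda^{n}\sigma_{\kappa+1}(x,n)$ (Lemma~\ref{l.domsingular}), and $E(x)$ is then built as the limit of the span $E_n(x)$ of the bottom $\kappa$ singular directions of $Df^n_x$, which is shown to be Cauchy in $\mathrm{Grass}_{\kappa}(T_xM)$ (Lemma~\ref{cauchy sequence}); linearity of $E$ is automatic in this approach, and $Df$-invariance and transversality to $\mathscr{C}$ are verified afterwards. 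With Lemma~\ref{lemma:gap} in hand your set $E(x)$ can in fact be identified with a subspace---it coincides with $\{v:\sup_{n}\lambda^{-n}\|Df^{n\ell}(v)\|/\m(Df^{n\ell}|_{F})<\infty\}$, which is visibly linear---so the two routes ultimately agree; the point is that the quantitative domination is the engine of the proof, not a corollary.
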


To prove Proposition~\ref{prop:conedom}, some preliminaries are needed. For a linear transformation $\Phi: V \to W$ where $V$ and $W$ are $d$-dimensional vector spaces with an inner product, we denote by $\Phi^{\ast}:W \to V$ its adjoint. It is well known that if the kernel of $\Phi$ is $r$-dimensional then there are $\{v_1,\dots,v_d\}$ and $\{w_1,\dots,w_d\}$ orthonormal bases of $V$ and $W$, respectively, such that
$\Phi(v_i)=0,\ \ \Phi^{\ast}(w_i)=0$ for each $1\leq i \leq r$; and $\Phi(v_i)=\sigma_i w_i, \ \ \Phi^{\ast}(w_i)=\sigma_i v_i$ with $\sigma_i>0$ for each $r<i\leq d$. Moreover, we have that $\sigma_i=\|\Phi(v_i)\|=\|\Phi^{\ast}(w_i)\|$ for each $i=1,\dots,d$.

We call $\sigma_i$'s the \textit{singular values of $\Phi$}. Note that the singular values of $\Phi$ are the eigenvalues of $\sqrt{\Phi^{\ast}\Phi}$ associated to the eigenvectors $v_i$'s, and we will order them as the following:
$$\sigma_1 = \cdots = \sigma_r=0 < \sigma_{r+1}\leq \cdots \leq \sigma_d.$$

When the kernel of $\Phi$ is trivial (i.e., $r=0$), we have $0<\sigma_1\leq \sigma_2 \leq \cdots \leq \sigma_d$.

Note that $\sigma_i$'s are also the singular values of $\Phi^{\ast}$. Moreover, the singular values are typically ordered from largest to smallest, but we are taking the opposite convention. For more information see \cite[Theorem 7.3.2]{MatrixAnalysis}.

We would also like to introduce a useful ``minimax'' characterization of the singular values, which appears in \cite[Appendix A]{BPS}:
\begin{align}\label{minimax}
\begin{split}
\sigma_{j}=\min\{\|\Phi\mid_{P}\|: \, P \in \mathrm{Grass}_j(V)\}; \ \ \text{and} \\
\sigma_{j+1}=\max\{\m(\Phi\mid_{U}): \, U \in \mathrm{Grass}_{d-j}(V)\},
\end{split}
\end{align}
where $\mathrm{Grass}_r(V)$ denotes the $r$-dimensional Grassmannian of $V$. 

\medskip

From now on, we denote by  $\sigma_1(x,n)\leq \cdots \leq \sigma_d(x,n)$ the singular values of $Df^n_x$ and by $\{e_1^n(x),e_2^n(x),\dots,e_d^n(x)\}$ an orthogonal basis of $T_xM$ so that $\|Df^n(e_i^n(x))\|=\sigma_i(x,n)$, for all $x\in M, \, n\geq 1$ and $1\leq i \leq d$. To simplify notation, we omit the dependence on $x$ and write $e_i$ instead of $e_i(x)$.

In order to prove Proposition~\ref{prop:conedom}, we define $E_n(x)$ as the subspace of $T_xM$ spanned by the vectors $e_i^n$'s in the basis such that $\|Df^n(e_i^n)\|=\sigma_i(x,n)< \sigma_{\kappa+1}(x,n)$; and $E^{\perp}_n(x)$ as the orthogonal complement of $E_n(x)$. It should be noted that $E_n(x)$ is at most $\kappa$-dimensional and $E^{\perp}_n(x)$ is the fastest direction of $Df^n$, which means that $\m(Df^n\mid_{E^{\perp}_n(x)})\geq \|Df^n(v)\|$ for every unit vector $v \notin E^{\perp}_n(x)$.

Since the cone-field $\mathscr{C}$ is $(d-\kappa)$-dimensional and $\ker(Df_x^n)\cap \mathscr{C}(x)=\{0\}$ for each $x \in M$ and $n\geq 1$, we obtain that $\dim \ker(Df^n_x) \leq \kappa$ for each $x \in M$ and $n\geq 1$. Otherwise, the intersection between $\ker(Df^n_x)$ and the cone $\mathscr{C}(x)$ would be nontrivial. Moreover, as $\dim \ker(Df^n_x)=r$ implies that $\sigma_i(x,n)=0$ for each $1\leq i\leq r$, we can conclude that $\ker(Df^n_x)$ is contained in $E_n(x)$.

We will prove that for all $n\geq 1$ the subspaces $E_n(x)$ are $\kappa$-dimensional, implying that they are in $\mathrm{Grass}_{\kappa}(T_xM)$. %\sout{and so we can consider in the $\mathrm{Grass}_{\kappa}(T_xM)$}. 
After that, we will show that the family $(E_n)_n$ is a Cauchy sequences and, recalling that $\mathrm{Grass}_{\kappa}(T_xM)$ is a compact space with the usual distance \eqref{dist}, we will get the limit $E(x)$ for each $x \in M$. Finally, we will prove that $E$ is $Df$-invariant.

Note that the subspace $E(x)$ which is limit of $E_n(x)$ satisfies that $\ker(Df^n_x) \subseteq E(x)$ for each $n\geq 1$. Indeed, since $\ker(Df^n_x) \subseteq E_n(x)$ and $\ker(Df^j_x) \subseteq \ker(Df^{j+1}_x)$, we have that $\ker(Df^j_x) \subseteq E_n(x)$ for each $1\leq j \leq n$ and, then, we get that $\ker(Df^j_x) \subseteq E(x)$ for each $j\geq 1$. 

To define the direction $F$, we observe that the assumptions
\begin{align*}
Df_x^{\ell}(\mathscr{C}(x))\subseteq \mathrm{int}(\mathscr{C}(f^{\ell}(x))) \ \ \text{and} \ \  \ker(Df_x^n)\cap \mathscr{C}(x)=\{0\},
\end{align*}
for each $x \in M$ and each $n\geq 1$, allow us to claim the following which the proof can be found in \cite{Crovisier-Potrie}.

\begin{claim}\label{subbundle:F}
	For each orbit $(x_i)_i$, there is a $(d-\kappa)$-dimensional subbundle $F$ defined by 	$$F(x_i)=\cap_{n\geq 0} Df^{n\ell}(\mathscr{C}(x_{i-n\ell})).$$
	Moreover, it is  contained in $\mathrm{int}(\mathscr{C}(x_i))$ and is also $Df^\ell$-invariant.
\end{claim}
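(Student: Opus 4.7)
The plan is to realize $F(x_i)$ as the limit of a nested sequence of cones obtained by pushing forward $\mathscr{C}$ along the orbit, and then to argue that this nested intersection collapses to a single $(d-\kappa)$-dimensional subspace.

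First I would fix an orbit $(x_i)_i$ and define, for each $n\geq 0$,
\begin{align*}
F_n(x_i) := Df^{n\ell}\bigl(\mathscr{C}(x_{i-n\ell})\bigr) \subseteq T_{x_i}M.
\end{align*}
The strict invariance $Df^{\ell}(\mathscr{C}(x))\subseteq \mathrm{int}(\mathscr{C}(f^{\ell}(x)))\cup\{0\}$ applied at the point $x_{i-(n+1)\ell}$ and followed by $Df^{n\ell}$ immediately gives $F_{n+1}(x_i)\subseteq F_n(x_i)$; in particular the sequence is nested and every $F_n(x_i)$ is contained in $\mathrm{int}(\mathscr{C}(x_i))\cup\{0\}$ for $n\geq 1$. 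Moreover, the hypothesis $\ker(Df^{n\ell}_y)\cap \mathscr{C}(y)=\{0\}$ implies that $Df^{n\ell}$ is injective on any $(d-\kappa)$-dimensional subspace contained in $\mathscr{C}(x_{i-n\ell})$, so each $F_n(x_i)$ contains at least one $(d-\kappa)$-dimensional linear subspace of $T_{x_i}M$.

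Next I would show that the intersection $F(x_i):=\bigcap_{n\geq 0} F_n(x_i)$ is exactly one $(d-\kappa)$-dimensional linear subspace. To do this I pick, for each $n$, a $(d-\kappa)$-dimensional subspace $P_n \subseteq \mathscr{C}(x_{i-n\ell})$ and consider $Q_n := Df^{n\ell}(P_n) \in \mathrm{Grass}_{d-\kappa}(T_{x_i}M)$. By compactness of the Grassmannian, any subsequence has an accumulation point, and any such accumulation point clearly lies in every $F_m(x_i)$, hence in $F(x_i)$. The remaining (and main) point is to show that the choice of the $P_n$'s and of the accumulation point does not matter, that is, there is a \emph{unique} $(d-\kappa)$-dimensional subspace contained in $F(x_i)$. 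This is the standard ``cone contraction'' argument: since $Df^{\ell}$ maps $\mathscr{C}$ strictly inside the interior of $\mathscr{C}$ and avoids the kernel, the induced action on the compact space of $(d-\kappa)$-planes lying in $\mathscr{C}$ contracts the Hausdorff/Grassmannian diameter of $\{P\in\mathrm{Grass}_{d-\kappa}:P\subseteq \mathscr{C}(y)\}$ uniformly; applying this to the backward orbit shows that the diameter of $\{P\in\mathrm{Grass}_{d-\kappa}:P\subseteq F_n(x_i)\}$ tends to zero, yielding a unique limit subspace, which is $F(x_i)$. I expect this contraction step to be the main obstacle, and it is precisely the place where the reference \cite{Crovisier-Potrie} is invoked.

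Finally, invariance under $Df^{\ell}$ is essentially built into the definition: since $F_n(x_{i+\ell})=Df^{n\ell}(\mathscr{C}(x_{i+\ell-n\ell}))=Df^{\ell}\bigl(Df^{(n-1)\ell}(\mathscr{C}(x_{i-(n-1)\ell}))\bigr)=Df^{\ell}(F_{n-1}(x_i))$, passing to the intersection gives $F(x_{i+\ell})=Df^{\ell}(F(x_i))$, where we use that $Df^{\ell}$ restricted to $F(x_i)\subseteq\mathrm{int}(\mathscr{C}(x_i))$ is a linear isomorphism onto its image by the kernel-avoidance hypothesis. Combining all three steps proves the claim.
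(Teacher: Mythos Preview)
Your proposal is correct and follows exactly the standard route; the paper itself does not give an independent argument for this claim but simply refers to \cite{Crovisier-Potrie}, and what you have written is precisely an outline of that classical cone-contraction proof. Your identification of the contraction step (showing the Grassmannian diameter of the set of $(d-\kappa)$-planes inside $F_n(x_i)$ tends to zero) as the only non-formal point is accurate, and your treatment of nestedness, existence of $(d-\kappa)$-planes in each $F_n$, and $Df^{\ell}$-invariance is clean and complete.
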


To complete the proof of Proposition~\ref{prop:conedom}, we will prove that the subbundle $E$ is transversal to the cone-field, implying that $E\cap F=\{0\}$; and, finally, we will show that $E\prec F$.

These arguments will be divided into a series of lemmas. Before stating the first one, we need to introduce some notation.

Since the subbundle $F$, as in Claim~\ref{subbundle:F}, is in the cone-field $\mathscr{C}$ away from the boundary, we can choose $\delta_0>0$ small enough such that for every orbit $(x_i)_i$ the family of $(d-\kappa)$-dimensional cones $\{\mathcal{K}(x_i,\delta_0):i \in \mathbb{Z}\}$, 
$$\mathcal{K}(x_i,\delta_0):=\{(u,v) \in F^{\perp}(x_i)\oplus F(x_i):\|u\|\leq\delta_0\|v\|\},$$
satisfies that $\mathcal{K}(x_i,\delta_0)$ is the contained in interior of $\mathscr{C}(x_i)$ for each $i \in \mathbb{Z}$.

%Moreover, by the definition of $F$, it follows that for every $0<\varepsilon<\delta$, there is a large number $N>0$ such that
%\begin{align}\label{cone}
%Df^{N\ell}(\mathcal{K}(x_{i-N\ell},\delta))\subseteq Df^{N\ell}(\mathscr{C}(x_{i-N\ell})) \subseteq \mathcal{K}(x_i,\varepsilon),
%\end{align}
%for each $i \in \mathbb{Z}$. We fix the family $\{\mathcal{K}(x_i,\delta):i \in \mathbb{Z}\}$. 

We are getting this alternative family of cones because that family is centered at the direction $F$ along of the orbit $(x_i)_i$ which does not necessarily happen for the cone-field $\mathscr{C}$ and, this will be useful to make the computation.

Now, we state the first lemma toward the proof of Proposition~\ref{prop:conedom}. Roughly speaking, the lemma guarantees that the directions out of the cone $\mathcal{K}(x_i,\delta)$ are dominated by the direction $F$.

%\begin{lemma}\label{lemma:gap}
%For each $0<\lambda<1$ and each $\delta>0$ small enough, there are $\varepsilon>0$ and $N>0$ such that if $w \notin \mathcal{K}(x_i,\delta)$ with $\|w\|=1$ then at least one of the following holds:
%\begin{enumerate}[label=$\mathrm{(\roman*)}$]
%\item $Df^{N\ell}(w) \in \mathcal{K}(x_{i+N\ell},\delta)$;
%\item or $\|Df^{N\ell}(w)\|\leq \lambda \m(Df^{N\ell}\mid_{F(x_i)})$.
%\end{enumerate}
%\end{lemma}

\begin{lemma}\label{lemma:gap}
	For every $0<\delta<\delta_0$ and $\varepsilon>0$ small enough, there is an integer $N>0$ such that for every orbit $(x_i)_i$ holds
	\begin{align}\label{cone}
	Df^{N\ell}(\mathcal{K}(x_i,\delta))\subseteq \mathrm{int}(\mathcal{K}(x_{i+N\ell},\varepsilon))\cup\{0\}.
	\end{align}
	Moreover, for every $0<\lambda<1$ and every $0<\delta<\delta_0$ small enough, we can choose $N>0$ such that if $w \notin \mathcal{K}(x_i,\delta)$ with $\|w\|=1$ then at least one of the following holds:
	\begin{enumerate}[label=$\mathrm{(\roman*)}$]
		\item $Df^{N\ell}(w) \in \mathcal{K}(x_{i+N\ell},\delta)$;
		\item or $\|Df^{N\ell}(w)\|\leq \lambda \m(Df^{N\ell}\mid_{F(x_i)})$.
	\end{enumerate}
\end{lemma}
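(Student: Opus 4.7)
I would split the lemma according to its two conclusions, using compactness of the inverse limit $M_f$ as the main tool.

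For the first inclusion, note that $\mathcal{K}(x_i, \delta) \subseteq \mathrm{int}(\mathscr{C}(x_i))$ by the choice of $\delta_0$, so it suffices to show that $Df^{N\ell}(\mathscr{C}(x_i)) \subseteq \mathcal{K}(x_{i+N\ell}, \varepsilon)$ uniformly in the orbit $(x_i)$. The identity $F(x_0) = \bigcap_{n \geq 0} Df^{n\ell}(\mathscr{C}(x_{-n\ell}))$ from Claim~\ref{subbundle:F} exhibits the cones $Df^{n\ell}(\mathscr{C}(x_{-n\ell}))$ as a nested decreasing family shrinking to $F(x_0)$. I would introduce $\psi_n : M_f \to [0, \pi/2]$ as the supremum of $\varangle(\mathbb{R}v, F(x_0))$ over unit vectors $v \in Df^{n\ell}(\mathscr{C}(x_{-n\ell}))$. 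Using the continuity of $\mathscr{C}$ and the continuous dependence of $F$ on the orbit (inherited from the nested intersection of continuous cones), each $\psi_n$ is continuous; the sequence is decreasing in $n$ and converges pointwise to $0$, hence uniformly by Dini's theorem on the compact space $M_f$. Choosing $N$ so that $\psi_N$ is small enough yields the first conclusion after a routine trigonometric estimate relating angular width to the $\delta$-width defining $\mathcal{K}$.

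For the dichotomy, take $\|w\| = 1$ with $w \notin \mathcal{K}(x_i, \delta)$. If there exists $0 \leq j \leq N$ with $Df^{j\ell}(w) \in \mathscr{C}(x_{i+j\ell})$, strict $\ell$-invariance traps the subsequent iterates inside $\mathscr{C}$, and the first part applied to the remaining iterates forces $Df^{N\ell}(w) \in \mathcal{K}(x_{i+N\ell}, \delta)$, so (i) holds. Otherwise $Df^{j\ell}(w) \notin \mathscr{C}(x_{i+j\ell})$ for every $0 \leq j \leq N$, and I would derive (ii). Decomposing $Df^{N\ell}(w)$ along $F(x_{i+N\ell}) \oplus F(x_{i+N\ell})^\perp$ using the block form induced by $F$-invariance, and using the hypothesis that $Df^{N\ell}(w) \notin \mathcal{K}(x_{i+N\ell}, \delta)$, the desired bound reduces to a uniform estimate of the form $\|Df^{N\ell}(u)\| \leq \lambda' \, \m(Df^{N\ell}\mid_{F(x_i)}) \|u\|$ for unit vectors $u$ that remain outside $\mathscr{C}$ along the first $N$ iterates, where $\lambda'$ can be chosen as small as needed in terms of $\lambda$ and $\delta$.

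The main obstacle is this last uniform estimate, which I would establish by a compactness/contradiction argument on $M_f$. Were it to fail, one could extract orbits, times $N_k \to \infty$, and unit vectors $u_k$ persistently outside $\mathscr{C}$ through $N_k$ iterates with $\|Df^{N_k \ell}(u_k)\| / \m(Df^{N_k \ell}\mid_F) \geq \lambda'$; passing to a limit in $M_f$ using continuity and closedness of $\mathscr{C}$ together with continuity of $F$ would produce a limit direction lying outside the closed cone whose forward images grow at the same rate as $F$, in direct conflict with the uniform cone-shrinking from the first part. The delicate step is ensuring that the ``persistently outside $\mathscr{C}$'' property and the norm-comparability to $F$ both survive the limit, which hinges on the strict gap $Df^\ell(\mathscr{C}) \subseteq \mathrm{int}(\mathscr{C})$ and on the uniform angle between $F$ and the boundary of $\mathscr{C}$ provided by Claim~\ref{subbundle:F}.
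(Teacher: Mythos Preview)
Your treatment of the first inclusion is essentially the paper's argument, made more precise via Dini's theorem. One caution: you invoke the continuity of $F$ on $M_f$, but at this point in the argument $F$ has only been obtained as the nested intersection in Claim~\ref{subbundle:F}, and its continuity has not yet been established (it will follow later from Proposition~\ref{cont-ext}, once domination is in hand). To avoid circularity, run Dini on the angular width of $Df^{n\ell}(\mathscr{C}(x_{-n\ell}))$ itself; this quantity is continuous in the orbit (it depends only on $\mathscr{C}$ and finitely many derivatives), decreases in $n$, and tends to zero pointwise.

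The genuine gap is in the dichotomy. Your case split and compactness scheme do not close. First, the split ``$Df^{j\ell}(w)$ enters $\mathscr{C}$ for some $0\leq j\leq N$'' does not by itself yield (i): if the entry time $j$ is within $N'$ of $N$ (where $N'$ is the constant from the first part with $\varepsilon=\delta$), there are too few remaining iterates to land inside $\mathcal{K}(\cdot,\delta)$. More seriously, the final contradiction you sketch does not materialize. Passing to a limit along $N_k\to\infty$ produces an orbit and a unit direction lying outside $\mathrm{int}(\mathscr{C})$, but the hypothesis $\|Df^{N_k\ell}(u_k)\|\geq \lambda'\,\m(Df^{N_k\ell}\mid_F)$ is a condition at the \emph{moving} time $N_k$ and does not survive as a usable property of the limit direction at time zero. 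There is no mechanism by which ``cone-shrinking'' alone forces small growth for vectors that merely stay outside $\mathscr{C}$; indeed, such a vector can carry a nontrivial $F$-component, and the quotient bundle argument you allude to does not separate this out.

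The paper bypasses all of this with a direct, one-step computation that you are missing: fix $\varepsilon$ with $0<\varepsilon<\bigl(\tfrac{\delta}{1+\delta}\bigr)\bigl(\tfrac{\delta\lambda}{1+\delta\lambda}\bigr)$ and take the corresponding $N$ from the first part. If some unit $w\notin\mathcal{K}(x_i,\delta)$ violates both (i) and (ii), pick a unit $v\in F(x_i)$ realizing $\m(Df^{N\ell}\mid_{F(x_i)})$ and choose $c$ so that $w'=cw+v\in\partial\mathcal{K}(x_i,\delta)\subseteq\mathscr{C}(x_i)$; one checks $|c|\geq\delta$. Writing $Df^{N\ell}(w)=au_1+bv_1$ in $F^{\perp}\oplus F$ with $|a|\geq\delta|b|$, the failure of (ii) gives $|a|\geq\tfrac{\delta}{1+\delta}\,\lambda\,\m(Df^{N\ell}\mid_F)$, and a short chain of inequalities shows the $F^{\perp}/F$ ratio of $Df^{N\ell}(w')$ exceeds $\varepsilon$. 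Hence $Df^{N\ell}(w')\notin\mathcal{K}(x_{i+N\ell},\varepsilon)$, contradicting the first part applied to $w'\in\mathscr{C}(x_i)$. This is the key idea your compactness argument lacks: one must \emph{mix} the bad vector with an $F$-vector to convert the growth comparison into a cone-violation.
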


\begin{proof}
	Since $\mathcal{K}(x_i,\delta) \subseteq \mathscr{C}(x_i)$ along of the orbit $(x_i)_i$ and by the definition of $F$, we have that for every $0<\varepsilon<\delta$ there is a large number $N>0$ such that
	$$Df^{N\ell}(\mathcal{K}(x_i,\delta))\subseteq Df^{N\ell}(\mathscr{C}(x_i)) \subseteq \mathcal{K}(x_{i+N\ell},\varepsilon),$$
	for each $i \in \mathbb{Z}$. This proves the first part of the lemma.
	
	In order to prove the second part, let us fix any $\varepsilon$ which $0<\varepsilon<\left(\frac{\delta}{1+\delta}\right)\left(\frac{\delta\lambda}{1+\delta\lambda}\right)$ and $N>0$ satisfying \eqref{cone}. We suppose by contradiction that there is a unit vector $w \notin \mathcal{K}(x_i,\delta)$ such that $$\|Df^{N\ell}(w)\|\geq \lambda \m(Df^{N\ell}\mid_{F(x_i)}) \ \ \text{and} \ \ Df^{N\ell}(w) \notin \mathcal{K}(x_{i+N\ell},\delta).$$
	Then, we will prove that \eqref{cone} does not happen, contradicting the first part of the lemma.
	
	Indeed, write $Df^{N\ell}(w)=au_1+bv_1$ where $u_1 \in F^{\perp}(x_{i+N\ell})$ and $v_1 \in F(x_{i + N\ell})$ are unit vectors such that $|a|\geq \delta |b|$. Then, we can bound $|a|$ from below and $|b|$ from above as follows: 
	$$|a|\left(1+\dfrac{1}{\delta}\right)\geq|a|+|b|\geq \|Df^{ N\ell}(w)\|
	\ \ \text{and} \ \  \|Df^{ N\ell}(w)\|\geq |b|,$$
	which gives
	$$|a| \geq \left(\dfrac{\delta}{1+\delta}\right) \|Df^{ N\ell}(w)\|.$$
	
	Take a unit vector $v \in F(x_i)$ satisfying $\|Df^{N\ell}(v)\|=\m(Df^{N\ell}\mid_{F(x_i)})$ and choose a number $c \in \mathbb{R}$ such that $w'=cw+v$ belongs to the boundary of $\mathcal{K}(x_i,\delta)$, which is contained in $\mathscr{C}(x_i)$. Note that the constant $c$ satisfies $|c|\geq\delta$, since $w$ is a unit vector out of the cone $\mathcal{K}(x_i,\delta)$ and $v$ is a unit vector in $F(x_i)$.
	
	We now decompose $Df^{N\ell}(w')=ca u_1+ cbv_1+\m(Df^{N\ell}\mid_{F(x_i)})v_0$ where $v_0=Df^{N\ell}(v)/\|Df^{N\ell}(v)\| \in F(x_{i+N\ell})$ and, then, we can get that
	\begin{align*}
	\dfrac{\|acu_1\|}{\|bcv_1+\m(Df^{N\ell}\mid_{F(x_i)})v_0\|}&
	\geq\dfrac{|a||c|}{|b||c|+\m(Df^{N\ell}\mid_{F(x_i)})}\\
	&\geq\left(\dfrac{\delta}{1+\delta}\right) \dfrac{\|Df^{N\ell}(w)\||c|}{\|Df^{N\ell}(w)\||c|+\m(Df^{N\ell}\mid_{F(x_i)})}\\
	&\geq \left(\dfrac{\delta}{1+\delta}\right)\left(\dfrac{|c|}{|c|+\lambda^{-1}}\right)\\
	&\geq \left(\dfrac{\delta}{1+\delta}\right) \left(\dfrac{\lambda|c|}{\lambda|c|+1}\right)\\
	&\geq \left(\dfrac{\delta}{1+\delta}\right) \left(\dfrac{\lambda}{\lambda+|c|^{-1}}\right)\\
	&
	\geq \left(\dfrac{\delta}{1+\delta}\right) \left(\dfrac{\lambda}{\lambda+\delta^{-1}}\right)
	\geq \left(\dfrac{\delta}{1+\delta}\right)\left(\dfrac{\delta\lambda}{1+\delta\lambda}\right)>\varepsilon.
	\end{align*}
	Therefore, we obtain that $Df^{N\ell}(w')$ does not belong to $\mathcal{K}(x_{i+N\ell},\varepsilon)$ which contradicts \eqref{cone} and therefore completes the proof of the lemma.
\end{proof}

%%%%%%%%%%%%%%%%%%%%%%%%%%%%%%%%%%%%%%%%%%%%%%%%%%%%%%%%%%%%%%%%%%%%%%%%%%%%%%%%%%%%%%%%%%%%%%%%%%%%%%%%%%%%%%%%%%%%%%%%%%%%%%%%%%%%%%%%%%%%%%%%%%%%%%%%%%%%%%%%%%%%%%%%%%%%%%%%%%%%%%%%%%%%%%%%%%%%%%%%%

%\sout{From now on, we fix $0<\lambda<1$ as in Lemma~\ref{lemma:gap} and take $\varepsilon>0$ the constant given by that lemma and, to simplify notation, we will assume that $N=1$.}

From now on, we fix $\lambda, \delta$, and $\varepsilon$ as in Lemma~\ref{lemma:gap} and, to simplify notation, we will assume that $N=1$.

We will prove that there is an ``exponential gap'' which means the quotient between $\sigma_{\kappa}(x,n)$ and $\sigma_{\kappa+1}(x,n)$ decrease exponentially fast to zero. It will be useful to prove that $(E_n(x))_n$ is a Cauchy sequence.

\begin{lemma}\label{l.domsingular}
	There is $c>0$ such that $$\sigma_{\kappa}(x,n) < c \lambda^n \sigma_{\kappa+1}(x,n),$$ for all $x \in M$ and $n\geq 1$.
\end{lemma}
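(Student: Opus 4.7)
The plan is to produce, for each $x \in M$ and each $n \ge 1$, a $\kappa$-dimensional subspace $P \subseteq T_xM$ on which $Df^n$ has norm at most $\lambda^n \m(Df^n|_{F(x)})$. Combined with the minimax characterization \eqref{minimax} (using $\dim F(x) = d-\kappa$ to get $\m(Df^n|_{F(x)}) \le \sigma_{\kappa+1}(x,n)$), this will yield
\[
\sigma_\kappa(x,n) \le \|Df^n|_P\| \le \lambda^n \m(Df^n|_{F(x)}) \le \lambda^n \sigma_{\kappa+1}(x,n),
\]
and any $c \ge 2$ then gives the desired strict inequality.

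After fixing any backward orbit ending at $x_0 = x$ (so that $F$ is defined along the forward iterates via Claim~\ref{subbundle:F}), the natural candidate is $P := (Df^n_x)^{-1}(F^\perp(x_n))$. By rank--nullity, $\dim P = \dim \ker Df^n_x + \dim(F^\perp(x_n) \cap \mathrm{Im}\, Df^n_x) \ge \kappa$, so I would shrink $P$ to a $\kappa$-dimensional subspace if necessary; the dimension inequality uses only that $\dim \ker Df^n_x \le \kappa$, which follows from the transversality of the cone-field to the kernel already established in the proof of Proposition~\ref{prop:conedom}.

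The heart of the argument is to show that every $w \in P$ with $Df^n(w) \ne 0$ has its entire forward orbit $w, Df(w), \ldots, Df^{n-1}(w)$ outside the cones $\mathcal{K}(x_k,\delta)$. Suppose instead that $Df^k(w) \in \mathcal{K}(x_k,\delta)$ for some $k < n$; then the forward invariance of the cones from the first part of Lemma~\ref{lemma:gap} forces $Df^n(w) \in \mathcal{K}(x_n,\delta)$. But by the choice of $P$ one has $Df^n(w) \in F^\perp(x_n)$, and the only vector in $F^\perp(x_n) \cap \mathcal{K}(x_n,\delta)$ is the origin (decomposing along $F^\perp \oplus F$ forces the $F$-component to vanish, hence the $F^\perp$-component too), contradicting $Df^n(w) \ne 0$. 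With this in hand, applying Lemma~\ref{lemma:gap}(ii) to the (nonzero) unit vectors $Df^k(w)/\|Df^k(w)\|$ for $k = 0, \ldots, n-1$ and telescoping gives
\[
\|Df^n(w)\| \le \lambda^n \prod_{k=0}^{n-1} \m(Df|_{F(x_k)}) \, \|w\| \le \lambda^n \m(Df^n|_{F(x)})\, \|w\|,
\]
where the last inequality uses the submultiplicativity of the conorm on the invariant subbundle $F$. By continuity this bound extends to all of $P$ (trivially on $P \cap \ker Df^n_x$), completing the chain above.

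The main obstacle is the cone-avoidance claim; this is the only step that exploits the specific choice $P = (Df^n_x)^{-1}(F^\perp(x_n))$, and in particular the fact that a vector whose orbit has entered the narrow cone around $F$ cannot end up orthogonal to $F$. The remaining pieces --- the dimension count for $P$, the iteration of Lemma~\ref{lemma:gap}(ii), and the final minimax comparison --- are routine once this framework is in place.
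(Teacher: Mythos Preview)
Your strategy mirrors the paper's: pick a $\kappa$-dimensional test subspace whose forward images stay outside the cones $\mathcal{K}(\cdot,\delta)$, apply the dichotomy of Lemma~\ref{lemma:gap} iteratively, and finish with the minimax characterization~\eqref{minimax}. The genuine difference is the choice of subspace. The paper takes $P_0 = E_{n\ell}(x)$ and must argue that $P_n := Df^{n\ell}(P_0)$ avoids $\mathcal{K}(x_{n\ell},\delta)$ by first showing $Df^{n\ell}(E_{n\ell}^\perp(x)) \subset \mathcal{K}(x_{n\ell},\delta)$ and then invoking the orthogonality $P_n \perp Df^{n\ell}(E_{n\ell}^\perp(x))$. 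Your choice $P = (Df^n_x)^{-1}(F^\perp(x_n))$ makes this step immediate, since $F^\perp(x_n) \cap \mathcal{K}(x_n,\delta) = \{0\}$ directly from the definition of $\mathcal{K}$; the price is the rank--nullity computation $\dim P \ge \kappa$, which you handle correctly. This is a legitimate and slightly cleaner variant of the same argument.

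There is one gap you should repair. After the convention $N=1$, Lemma~\ref{lemma:gap} is still a statement about $Df^\ell$, not about $Df$: both the forward invariance $Df^\ell(\mathcal{K}(x_i,\delta)) \subset \mathcal{K}(x_{i+\ell},\delta)$ and the alternative (ii) concern $\ell$-blocks. As written, your cone-avoidance step (``$Df^k(w)\in\mathcal{K}(x_k,\delta)$ forces $Df^n(w)\in\mathcal{K}(x_n,\delta)$'') and your telescoping $\|Df^n(w)\|\le \lambda^n\prod_{k=0}^{n-1}\m(Df|_{F(x_k)})\,\|w\|$ both invoke Lemma~\ref{lemma:gap} at single iterates of $Df$, which is not what the lemma provides. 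The fix is exactly what the paper does: first treat $n$ a multiple of $\ell$, check cone-avoidance and apply (ii) only at $k\in\{0,\ell,2\ell,\dots\}$ to obtain $\sigma_\kappa(x,n\ell)\le\lambda^n\sigma_{\kappa+1}(x,n\ell)$, and then absorb the remainder $1\le j<\ell$ into the constant $c$ via uniform bounds on $\|Df^j\|$ and on $\m(Df^j|_U)$ for $(d-\kappa)$-planes $U\subset\mathscr{C}$.
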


\begin{proof}
	We will first prove the lemma for $n$ multiple of $\ell$.
	
	We will prove for an orbit $(x_i)_i$ that 
	\begin{align}\label{claim-1}
	\sigma_{\kappa}(x_i,n\ell)\leq \lambda^n \sigma_{\kappa+1}(x_i,n\ell),
	\end{align}
	for every $n\geq 1$.
	
	Consider $P_0=E_{n\ell}(x_i)$ and $P_j=Df^{j\ell}(E_{n\ell}(x_i))$ for each $1\leq j \leq n$. We will firstly show that $P_j \cap \mathcal{K}(x_{i+j\ell},\delta) =\{0\}$ for each $0\leq j \leq n$ and, by invariance of the family $\{\mathcal{K}(x_i,\delta):i \in \mathbb{Z}\}$, it is enough to show this for $P_n$. Note that $P_n$ is orthogonal to $Df^{n\ell}(E_{n\ell}^\perp(x_i))$ by definition and, since $\dim E_{n\ell}^\perp(x_i)= \dim Df^{n\ell}(E_{n\ell}^\perp(x_i))$ is at least the dimension of the cone, it is enough to show that $Df^{n\ell}(E_{n\ell}^\perp(x_i)) \subset \mathcal{K}(x_{i+n\ell},\delta)$. If this were not the case, we would find a unit vector $v \in E_{n\ell}^{\perp}(x)$ which does not belong to the cone $\mathcal{K}(x_i,\delta)$ such that $Df^{n\ell}(v)\notin \mathcal{K}(x_{i+n\ell},\delta)$. We have that $\|Df^{n\ell}(v)\|\geq \sigma_{\kappa+1}(x_i,n\ell)\geq \m(Df^{n\ell}\mid_{F(x_i)})$, by definition of $E_{n\ell}(x_i)^{\perp}$. Then, a computation similar to the one in Lemma \ref{lemma:gap} gives the contradiction that $Df^{n\ell}(\mathcal{K}(x_i,\delta))$ is not contained in $\mathcal{K}(x_{i+n\ell},\varepsilon))$.
	
	To prove \eqref{claim-1}, we use the ``minimax'' characterization in \eqref{minimax} which says
	\begin{align*}
	\dfrac{\sigma_{\kappa}(x_i,n\ell)} {\sigma_{\kappa+1}(x_i,n\ell)}
	=\dfrac{\min\{\|Df^{n\ell}\mid_{P}\|:P\in \mathrm{Grass}_{\kappa}(T_{x_{i}}M)\}}
	{\max\{\m(Df^{n\ell}\mid_{U}):U \in \mathrm{Grass}_{d-\kappa}(T_{x_{i}}M)\}}\\
	\leq \dfrac{\|Df^{n\ell}\mid_{E_{n\ell}(x_{i})}\|} {\m(Df^{n\ell}\mid_{F(x_i)})}\leq  \Pi_{j=0}^{n-1}\dfrac{\|Df^{\ell}\mid_{P_j}\|} {\m(Df^{\ell}\mid_{F(x_{i+j\ell})})}.
	\end{align*}
	
	Thus, for every $w \in P_j$, we have that $Df^{\ell}(w) \notin \mathcal{K}(x_{i+j\ell},{\color{blue} \delta})$ and, by Lemma~\ref{lemma:gap}, we obtain that
	$\|Df^{\ell}\mid_{P_j}\|\leq \lambda \m(Df^{\ell}\mid_{F(x_{i+j\ell})})$.
	
	Finally, by continuity of $\mathscr{C}$ and by $\ker(Df^j)\cap \mathscr{C}=\{0\}$, we can get a uniform constant $c>0$ such that
	%Finally, to complete the proof of Lemma~\ref{l.domsingular}, we observe that as the cone-field $\mathscr{C}$ depends continuously on $x \in M$ and is transversal to kernel of $Df^j$,  there exists a uniform constant $c>0$ such that 
	$$\frac{\max\{\|Df^j_x\|:x \in M\}}{\min\{\m(Df^j_{x}\mid_{U}):U\in \mathrm{Grass}_{d-\kappa}(T_x) \ \ \text{and} \ \ U \subseteq \mathscr{C}(x)\}}<c$$
	for each $1\leq j<\ell$
	and, then, conclude that
	$\sigma_{\kappa}(x_i,n)\leq c \lambda^n \sigma_{\kappa+1}(x_i,n)$.
\end{proof}

The following result follows as immediate consequence of the previous lemma, since $\sigma_{\kappa}(x,n)< \sigma_{\kappa+1}(x,n)$ for every $n$ large enough and $x \in M$.

\begin{corollary}\label{cor:dim}
	We have that $E_n(x)$ is $\kappa$-dimensional, for every $n$ large enough independently on $x \in M$. Consequently, $E^{\perp}_n(x)$ is $(d-\kappa)$-dimensional.
\end{corollary}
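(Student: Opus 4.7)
The plan is to read Corollary~\ref{cor:dim} as an immediate arithmetic consequence of the exponential singular-value gap in Lemma~\ref{l.domsingular}. The definition recalls that $E_n(x)$ is spanned by those basis vectors $e_i^n$ with $\sigma_i(x,n)<\sigma_{\kappa+1}(x,n)$, so the dimension of $E_n(x)$ is exactly the number of singular values strictly below $\sigma_{\kappa+1}(x,n)$. Hence the whole issue is whether $\sigma_\kappa(x,n)<\sigma_{\kappa+1}(x,n)$ strictly, and whether this strictness is uniform in $x$.

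First I would invoke Lemma~\ref{l.domsingular} with the already fixed $0<\lambda<1$ and the constant $c>0$: for every $x\in M$ and every $n\geq 1$,
\[
\sigma_{\kappa}(x,n)\;<\;c\,\lambda^{n}\,\sigma_{\kappa+1}(x,n).
\]
Note that $\sigma_{\kappa+1}(x,n)>0$: indeed the hypothesis $\ker(Df_x^n)\cap\mathscr{C}(x)=\{0\}$ with $\dim\mathscr{C}(x)=d-\kappa$ forces $\dim\ker(Df_x^n)\leq\kappa$, so at most the first $\kappa$ singular values can vanish. Next I would choose $n_0\geq 1$ such that $c\lambda^{n_0}<1$; this choice is uniform in $x$ because $c$ and $\lambda$ are. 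Then for every $n\geq n_0$ and every $x\in M$,
\[
\sigma_{\kappa}(x,n)\;<\;\sigma_{\kappa+1}(x,n).
\]

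Finally I would count dimensions. The singular values are ordered $\sigma_1(x,n)\leq\cdots\leq\sigma_d(x,n)$, so the strict inequality at position $\kappa/\kappa{+}1$ means that exactly the indices $1,\dots,\kappa$ satisfy $\sigma_i(x,n)<\sigma_{\kappa+1}(x,n)$, while indices $\kappa{+}1,\dots,d$ do not. By definition, $E_n(x)=\mathrm{span}\{e_i^n:\sigma_i(x,n)<\sigma_{\kappa+1}(x,n)\}$, so $\dim E_n(x)=\kappa$ for every $x\in M$ and every $n\geq n_0$. Taking orthogonal complements inside $T_xM$ then gives $\dim E_n^{\perp}(x)=d-\kappa$, which concludes the corollary.

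There is really no obstacle here: the only thing one must be slightly careful about is the uniformity of $n_0$, but that is free because the gap constants $c$ and $\lambda$ from Lemma~\ref{l.domsingular} are independent of $x$. Once one has a single $n_0$ that works simultaneously for all $x\in M$, the dimension count above is purely linear-algebraic.
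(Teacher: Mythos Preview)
Your proof is correct and follows exactly the approach the paper takes: the paper simply remarks that the corollary is an immediate consequence of Lemma~\ref{l.domsingular} since $\sigma_\kappa(x,n)<\sigma_{\kappa+1}(x,n)$ for all $n$ large enough and all $x\in M$. You have spelled out the same argument in more detail, including the uniformity of $n_0$ and the verification that $\sigma_{\kappa+1}(x,n)>0$, which the paper leaves implicit.
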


%%%%%%%%%%%%%%%%%%%%%%%%%%%%%%%%%%%%%%%%%%%%%%%%%%%%%%%%%%%%%%%%%%%
%%%%%%%%%%%%%%%%%%%%%%%%%%%%%%%%%%%%%%%%%%%%%%%%%%%%%%%%%%%%%%%%%%%

Let us remind the reader what we have done so far in order to prove Proposition~\ref{prop:conedom}. By Corollary~\ref{cor:dim}, we can suppose without loss of generality that $E_n(x)$ is $\kappa$-dimensional for all $n\geq 1$ and we will next show that the sequence $(E_n(x))_n$ is of Cauchy on $\mathrm{Grass}_{\kappa}(T_xM)$ endowed with the distance \eqref{dist}.

We now state the following result (see \cite[Lemma A.4]{BPS} and also see \cite{BG,QTZ,Z} for related results). 

\begin{lemma}\label{cauchy sequence}
	The sequence $(E_n(x))_n$ is of Cauchy.
\end{lemma}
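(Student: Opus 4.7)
The strategy is to exploit the exponential singular value gap $\sigma_\kappa(x,n)<c\lambda^n\sigma_{\kappa+1}(x,n)$ from Lemma~\ref{l.domsingular} to show that the ``slow'' $\kappa$-dimensional subspace of $Df^n_x$ stabilizes as $n$ grows. This is essentially the scheme of \cite[Lemma A.4]{BPS}, transplanted to the present setting, where now Lemma~\ref{l.domsingular} provides the decisive gap and Corollary~\ref{cor:dim} guarantees constant dimension for large $n$.

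First I would fix $x\in M$ and, without loss of generality, restrict to $n\geq N_0$ so that $E_n(x)\in\mathrm{Grass}_\kappa(T_xM)$. For $m\leq n$, by the characterization \eqref{dist} of the Grassmannian distance, showing $\mathrm{dist}(E_m(x),E_n(x))$ is small amounts to showing that for every unit vector $e\in E_n(x)$, its orthogonal projection $v$ onto $E_m^\perp(x)$ has small norm. Writing the orthogonal decomposition $e=u+v$ with $u\in E_m(x)$ and $v\in E_m^\perp(x)$, the key structural fact is that the singular value decomposition of $Df^m_x$ makes the images $Df^m(u)$ and $Df^m(v)$ orthogonal in $T_{f^m(x)}M$. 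Therefore $\|Df^m(v)\|\geq \sigma_{\kappa+1}(x,m)\|v\|$. On the other hand, since $e\in E_n(x)$, the ``minimax'' characterization \eqref{minimax} yields $\|Df^n(e)\|\leq \sigma_\kappa(x,n)$.

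The main obstacle is then to convert the above into an actual upper bound on $\|v\|$, which requires controlling $\|Df^{n-m}_{f^m(x)}\big(Df^m(v)\big)\|$ from below: under $Df^{n-m}_{f^m(x)}$, the image $Df^m(v)$ could a priori be contracted. To handle this I would invoke the same cone argument as in Lemma~\ref{lemma:gap}: the $(d-\kappa)$-dimensional subspace $Df^m(E_m^\perp(x))$ lies in the ``fast'' image of $Df^m$ and, using $\ker(Df^k)\cap\mathscr{C}=\{0\}$ together with the $\ell$-invariance of $\mathscr{C}$, it must be transverse by a uniform angle to any $\kappa$-dimensional slow subspace at $f^m(x)$. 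Consequently $Df^{n-m}_{f^m(x)}$ retains, up to a multiplicative constant, the expansion provided by the minimax bound $\mathfrak{m}(Df^{n-m}_{f^m(x)}|_{Df^m(E_m^\perp(x))})\gtrsim \sigma_{\kappa+1}(x,n)/\sigma_{\kappa+1}(x,m)$. Combining the three estimates and applying Lemma~\ref{l.domsingular} produces an inequality of the form $\|v\|\leq C\lambda^m$, uniformly in $n\geq m$ and in $x\in M$, which is exactly the Cauchy condition. The hard part, as noted, is calibrating the cone/minimax argument cleanly enough so that the constants do not depend on $n$, but since the cone invariance is at a fixed iterate $\ell$ and the angle bound is uniform on $M$, this goes through in the standard way.
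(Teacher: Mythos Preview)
Your approach diverges from the paper's in a way that creates a real difficulty. The paper does \emph{not} compare $E_m$ directly with $E_n$ for arbitrary $m\le n$. Instead it uses the triangle inequality to telescope
\[
\mathrm{dist}(E_m(x),E_n(x))\le \sum_{j=0}^{n-m-1}\mathrm{dist}(E_{m+j}(x),E_{m+j+1}(x)),
\]
and then bounds only the consecutive increment $\mathrm{dist}(E_n,E_{n+1})$. For that, it takes a unit vector $w\in E_n$ and decomposes it along $E_{n+1}\oplus E_{n+1}^\perp$ (note: the \emph{larger} index). The payoff is that one only ever needs a \emph{single} iterate of $Df$ to pass from $n$ to $n+1$: the upper bound $\|Df^{n+1}(w)\|\le K\|Df^n(w)\|\le K\sigma_\kappa(x,n)$ is trivial, and orthogonality of $Df^{n+1}(E_{n+1})$ and $Df^{n+1}(E_{n+1}^\perp)$ gives the lower bound. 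Then Lemma~\ref{ineq} and Lemma~\ref{l.domsingular} finish it. No cone estimate on long compositions $Df^{n-m}$ is ever needed.

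By contrast, your decomposition of $e\in E_n$ along $E_m\oplus E_m^\perp$ forces you to control $Df^{n-m}$ on $Df^m(E_m^\perp)$, and this is where the gap lies. First, the claimed inequality $\mathfrak{m}(Df^{n-m}\mid_{Df^m(E_m^\perp)})\gtrsim \sigma_{\kappa+1}(x,n)/\sigma_{\kappa+1}(x,m)$ does not follow from cone invariance alone: knowing that $Df^m(E_m^\perp)\subset\mathscr{C}(f^m(x))$ (Lemma~\ref{lemma:n}) only gives a lower bound of the type $K_1^{n-m}$ with $K_1=\min_x\mathfrak{m}(Df\mid_{\mathscr{C}(x)})$, not a comparison with the singular values of the full composition. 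To get your stated bound you would essentially need to know that $Df^m(E_m^\perp)$ is uniformly transverse to $E_{n-m}(f^m(x))$, which is tantamount to the Cauchy property you are trying to prove. Second, even granting your bound, you still face the problem that $Df^n(u)$ and $Df^n(v)$ are \emph{not} orthogonal (only $Df^m(u)$ and $Df^m(v)$ are), so passing from $\|Df^n(e)\|\le\sigma_\kappa(x,n)$ to a bound on $\|v\|$ via the triangle inequality introduces a term $\|Df^n(u)\|\le K^{n-m}\sigma_\kappa(x,m)$, and the resulting estimate carries a factor $(K/K_1)^{n-m}$ that is not uniform in $n$. The telescoping trick is precisely what eliminates this dependence, and it is in fact the scheme used in \cite[Lemma~A.4]{BPS}; I would recommend redoing the argument with the consecutive-term reduction.
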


Before the proof of the lemma, we first need two useful lemmas.

\begin{lemma}\label{lemma:n}
	For large $n$, we have that $Df^n(E_n^{\perp}(x))\subseteq \mathscr{C}(f^n(x))$ for every $x \in M$.
\end{lemma}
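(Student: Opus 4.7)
The plan is to argue by contradiction, iteratively applying the cone dichotomy of Lemma~\ref{lemma:gap}. Under the normalization $N=1$, the cone family $\{\mathcal{K}(x_i, \delta)\}$ is forward $\ell$-invariant and $\mathcal{K}(\cdot,\delta)\subset\mathscr{C}(\cdot)$, so it suffices to prove the stronger inclusion
\begin{align*}
Df^{k\ell}\bigl(E_{k\ell}^\perp(x)\bigr) \subseteq \mathcal{K}\bigl(f^{k\ell}(x), \delta\bigr)
\end{align*}
for all $k\geq 1$, and then extend to general $n$ by absorbing the remainder $r$ in $n = k\ell + r$.

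Suppose for contradiction that some unit vector $v \in E_{k\ell}^\perp(x)$ satisfies $Df^{k\ell}(v) \notin \mathcal{K}(f^{k\ell}(x), \delta)$. By forward invariance of the cone family, none of the unit vectors $w_j := Df^{j\ell}(v)/\|Df^{j\ell}(v)\|$, for $0 \leq j \leq k-1$, can lie in $\mathcal{K}(f^{j\ell}(x), \delta)$; otherwise its forward image would remain inside $\mathcal{K}$, contradicting our hypothesis at level $k$. Hence the dichotomy of Lemma~\ref{lemma:gap} forces case (ii) at each level $j$:
\begin{align*}
\|Df^\ell(w_j)\| \leq \lambda\,\m\bigl(Df^\ell\mid_{F(f^{j\ell}(x))}\bigr).
\end{align*}
Multiplying these estimates over $j=0,\ldots,k-1$ and using $\prod_j \m(Df^\ell\mid_{F(f^{j\ell}(x))}) \leq \m(Df^{k\ell}\mid_{F(x)})$ (since $Df\mid_F$ is a composition of isomorphisms and the conorm is super-multiplicative on invertible maps), we obtain $\|Df^{k\ell}(v)\| \leq \lambda^k\,\m(Df^{k\ell}\mid_{F(x)})$. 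On the other hand, $v \in E_{k\ell}^\perp(x)$ gives $\|Df^{k\ell}(v)\| \geq \sigma_{\kappa+1}(x, k\ell)$, and the minimax formula~\eqref{minimax} applied to the $(d-\kappa)$-dimensional subspace $F(x)$ yields $\sigma_{\kappa+1}(x, k\ell) \geq \m(Df^{k\ell}\mid_{F(x)})$. Combining these forces $1 \leq \lambda^k$, contradicting $\lambda < 1$ for every $k \geq 1$.

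The main subtlety lies in passing from multiples of $\ell$ to arbitrary large $n$: since $\mathscr{C}$ is only $\ell$-invariant, one must exploit that the argument above actually places $Df^{k\ell}(v)$ in the strictly smaller cone $\mathcal{K}(\cdot, \varepsilon)$ with $\varepsilon<\delta$, which sits inside the interior of $\mathscr{C}$ with a uniform buffer. By compactness of $M$ and continuity of $F$ and $\mathscr{C}$, this buffer is bounded below, so the boundedly many intermediate iterations $Df^r$ with $0 \leq r < \ell$ can be absorbed for $k$ sufficiently large, giving $Df^n(v) \in \mathscr{C}(f^n(x))$ for all large $n$.
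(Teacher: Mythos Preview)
Your core argument for $n=k\ell$ is correct and is essentially the paper's proof: both iterate the dichotomy of Lemma~\ref{lemma:gap} to show that a vector staying outside the cone must grow strictly slower than $\m(Df^{k\ell}\mid_{F})$, contradicting membership in $E_{k\ell}^\perp$ via the minimax characterization of $\sigma_{\kappa+1}$.

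The genuine gap is in your extension to general $n=k\ell+r$. You establish the inclusion $Df^{k\ell}(E_{k\ell}^\perp(x))\subseteq\mathcal{K}(f^{k\ell}(x),\delta)$ and then propose to ``absorb'' the extra $Df^r$. But the lemma concerns $E_n^\perp(x)$, which is the fast singular subspace of $Df^n_x$, not of $Df^{k\ell}_x$; these are in general different $(d-\kappa)$-planes, and no relation between them has been established at this point (the Cauchy property is Lemma~\ref{cauchy sequence}, which comes \emph{after} the present lemma and depends on it via Lemma~\ref{ineq}). So your absorbed conclusion $Df^n(v)\in\mathscr{C}(f^n(x))$ is a statement about $v\in E_{k\ell}^\perp(x)$, not $v\in E_n^\perp(x)$.

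The fix is exactly what the paper does: run the same contradiction argument directly on a unit $v\in E_n^\perp(x)$, but decompose $n=r+k\ell$ with the remainder taken \emph{first}. If $Df^n(v)\notin\mathcal{K}(f^n(x),\delta)$, forward invariance of $\mathcal{K}$ forces $Df^{i\ell}(Df^r(v))\notin\mathcal{K}$ for every $0\leq i\leq k$, so iterating part~(ii) of Lemma~\ref{lemma:gap} from the base point $f^r(x)$ gives $\|Df^n(v)\|\leq\lambda^k\,\m(Df^{k\ell}\mid_{F(f^r(x))})\,\|Df^r(v)\|$. Since $\|Df^r(v)\|$ and $\m(Df^r\mid_{F(x)})^{-1}$ are uniformly bounded over $0\leq r<\ell$ and $x\in M$, for $k$ large this drops below $\m(Df^n\mid_{F(x)})\leq\sigma_{\kappa+1}(x,n)\leq\|Df^n(v)\|$, the same contradiction as before.
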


\begin{proof}
	The proof will follow from the item (ii) in Lemma~\ref{lemma:gap}. Indeed, suppose by contradiction that $Df^n(E_n^{\perp})$ is not contained the cone. In particular, there is a vector $v \in E_n^{\perp}(x)$ such that $Df^{j\ell}(Df^r(v))$ does not belongs to the cone with $n=j\ell + r$ and $0\leq r \leq \ell-1$ satisfying:
	\begin{align*}
	\|Df^n(v)\|\leq \lambda^j\m(Df^{j\ell}\mid_{F(x)})\|Df^r(v)\|.
	\end{align*}
	Thus, since $j$ goes to $+\infty$ as $n$ goes to $+\infty$, we can get $n$ large enough such that $\|Df^n(v)\|<\m(Df^{j\ell}\mid_{F(x)})$. Contradicting the definition of $E_n^{\perp}(x)$ to be the fastest direction of $Df^n$. %\sout{if $n$ is large enough, we have that the fastest direction of $Df^n$ is $F$ which contradicts the definition of $E_n^{\perp}$.} 
\end{proof}

\begin{lemma}\label{ineq}
	There is a constant $K_1>0$ such that for every $x \in M$ and $n$ large the following inequality holds:
	\begin{align*}
	\min\{\|Df_{f^n(x)}\|\sigma_{\kappa+1}(x,n), \|Df_x||\sigma_{\kappa+1}(f(x),n)\} \geq \sigma_{\kappa+1}(x,n+1)\geq K_1\sigma_{\kappa+1}(x,n).
	\end{align*}
\end{lemma}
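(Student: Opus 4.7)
The lemma has two upper bounds and one lower bound; I handle each using the minimax characterization \eqref{minimax} of singular values together with the fact that $\ker Df\cap \mathscr C=\{0\}$.

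For the first upper bound, I use $Df^{n+1}_x=Df_{f^n(x)}\circ Df^n_x$. For any $(d-\kappa)$-dimensional subspace $U\subseteq T_xM$ and any unit $v\in U$, the estimate $\|Df^{n+1}(v)\|\leq \|Df_{f^n(x)}\|\,\|Df^n(v)\|$ gives $\m(Df^{n+1}|_U)\leq \|Df_{f^n(x)}\|\,\m(Df^n|_U)$. Taking the max over $U$ and using \eqref{minimax} yields $\sigma_{\kappa+1}(x,n+1)\leq \|Df_{f^n(x)}\|\,\sigma_{\kappa+1}(x,n)$. For the second upper bound, I write $Df^{n+1}_x=Df^n_{f(x)}\circ Df_x$. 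For $n$ large enough that $\sigma_{\kappa+1}(x,n+1)>0$ (which holds since $\dim\ker Df^{n+1}_x\le\kappa$), the maximum in \eqref{minimax} is attained by some $U$ with $U\cap\ker Df_x=\{0\}$; setting $U':=Df_x(U)\in\mathrm{Grass}_{d-\kappa}(T_{f(x)}M)$, a direct substitution gives $\m(Df^{n+1}|_U)\leq \|Df_x\|\,\m(Df^n|_{U'})\leq \|Df_x\|\,\sigma_{\kappa+1}(f(x),n)$, and the upper bound follows.

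For the lower bound, I use the test subspace $U:=E_n^\perp(x)$, which is $(d-\kappa)$-dimensional (for large $n$, by Corollary \ref{cor:dim}) and satisfies $\m(Df^n|_{E_n^\perp(x)})=\sigma_{\kappa+1}(x,n)$ by the definition of the $e_i^n$. By Lemma \ref{lemma:n}, for $n$ large we have $Df^n(E_n^\perp(x))\subseteq \mathscr C(f^n(x))$. The crucial observation is that the continuous function $(x,v)\mapsto \|Df_x(v)\|$ defined on the compact set $\{(x,v):x\in M,\ v\in\mathscr C(x),\ \|v\|=1\}$ is strictly positive, since $\ker Df_x\cap\mathscr C(x)=\{0\}$ by hypothesis; hence it admits a positive lower bound $K_1>0$. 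Applying this to $w=Df^n(v)/\|Df^n(v)\|\in \mathscr C(f^n(x))$ for any unit $v\in E_n^\perp(x)$ gives
\[
\|Df^{n+1}(v)\|=\|Df_{f^n(x)}(Df^n(v))\|\geq K_1\|Df^n(v)\|\geq K_1\sigma_{\kappa+1}(x,n),
\]
so that $\sigma_{\kappa+1}(x,n+1)\geq \m(Df^{n+1}|_{E_n^\perp(x)})\geq K_1\sigma_{\kappa+1}(x,n)$.

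The only genuinely delicate point is verifying that we can extract a \emph{uniform} positive lower bound $K_1$ on $\|Df\cdot\|$ restricted to unit vectors of the cone field, which is where the transversality assumption $\ker Df\cap \mathscr C=\{0\}$ combined with compactness of $M$ and continuity of the cone field is essential; once this is in hand, the rest is a straightforward application of \eqref{minimax} together with Lemma \ref{lemma:n} to control the image of $E_n^\perp$ under $Df^n$.
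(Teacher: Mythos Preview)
Your proof is correct and follows essentially the same architecture as the paper: the lower bound is obtained exactly as in the text, by testing against $E_n^\perp(x)$, invoking Lemma~\ref{lemma:n} to place $Df^n(E_n^\perp(x))$ inside the cone, and using the uniform positive lower bound $K_1=\min_{x\in M}\min_{v\in\mathscr C(x),\,\|v\|=1}\|Df_x(v)\|$ coming from compactness and the transversality $\ker Df\cap\mathscr C=\{0\}$.

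The two upper bounds are where you deviate slightly. For $\|Df_{f^n(x)}\|\sigma_{\kappa+1}(x,n)\geq\sigma_{\kappa+1}(x,n+1)$ the paper tests against the $(\kappa+1)$-dimensional subspace $V=\mathrm{span}(E_n(x)\cup\{e^n_{\kappa+1}\})$ and uses the $\min$-side of \eqref{minimax}, whereas you stay on the $\max$-side with $(d-\kappa)$-planes; both are one-line minimax manipulations. For $\|Df_x\|\sigma_{\kappa+1}(f(x),n)\geq\sigma_{\kappa+1}(x,n+1)$ the paper passes to the adjoint $(Df^n)^{\ast}$ and applies minimax in $T_{f^{n+1}(x)}M$, while you push the optimal $(d-\kappa)$-plane $U$ forward by $Df_x$ to $U'=Df_x(U)$ and compare $\m(Df^{n+1}|_U)$ with $\m(Df^n|_{U'})$. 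Your route is more elementary (no adjoints needed) and just as rigorous; the only small point to make explicit is that the maximizing $U$ satisfies $U\cap\ker Df_x=\{0\}$ because $\sigma_{\kappa+1}(x,n+1)>0$ (which holds for \emph{all} $n$, not just large $n$, since $\dim\ker Df^{n+1}_x\le\kappa$), so that $U'$ is genuinely $(d-\kappa)$-dimensional.
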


\begin{proof}
	We take the constant $K_1=\min\{\|Df\mid_{\mathscr{C}(x)}\|: x \in M\}$. Since the cone-field $\mathscr{C}$ depends continuous on $x$ and is transversal to $\ker(Df^n_x)$ for all $n\geq 1$, we have that $K_1>0$.
	
	We will first prove that 
	\begin{align*}%\label{eq}
	\|Df_{f^n(x)}\|\sigma_{\kappa+1}(x,n)\geq \sigma_{\kappa+1}(x,n+1) \geq K_1\sigma_{\kappa+1}(x,n).
	\end{align*}
	
	For that, we take $V$ as the subspace of $T_xM$ generated by $E_n(x)\cup \{e_{\kappa+1}^n\}$. Then,
	\begin{align*}
	\|Df_{f^n(x)}\|\sigma_{\kappa+1}(x,n)&\geq \|Df_x^{n+1}\mid_V\|\\
	&=\sigma_{\kappa+1}(x,n+1)\\
	&\geq \m(Df^{n+1}_x\mid_{E^{\perp}_n(x)})\\
	&\geq \m(Df_{f^n(x)}\mid_{Df^n(E^{\perp}_n(x))}) \m(Df^n_x\mid_{E^{\perp}_n(x)}).
	\end{align*}
	
	Thus, we conclude the proof using that $Df^n_x\mid_{E^{\perp}_n(x)}$ is contained in $\mathscr{C}(f^n(x))$ given by Lemma~\ref{lemma:n}.
	
	To complete the proof, it is sufficient to show that $\|Df_x\|\sigma_{\kappa+1}(f(x),n)\geq \sigma_{\kappa+1}(x,n+1)$, since we have already known that $\sigma_{\kappa+1}(x,n+1)\geq K_1\sigma_{\kappa+1}(x,n)$.
	
	In order to do that we will use that
	\begin{align*}
	\sigma_j(f(x),n)=\min\{\|(Df^n)^{\ast}\mid_{P}\|: \, P \in \mathrm{Grass}_j(T_{f^{n+1}(x)}M)\}; \ \ \text{and} \\
	\sigma_{j+1}(f(x),n)=\max\{\m((Df^n)^{\ast}\mid_{U}): \, U \in \mathrm{Grass}_{d-j}(T_{f^{n+1}(x)}M)\}.
	\end{align*}
	
	We consider two orthogonal bases $\{e_1^{n},\dots,e_d^{n}\}$ of $T_{f(x)}M$ and $\{w_1^{n},\dots,w_d^{n}\}$ of $T_{f^{n+1}(x)}M$ satisfying $Df^{n}(e_j^{n})=\sigma_j(f(x),n)w_j^{n}$ and $(Df^{n})^{\ast}(w_j^{n})=\sigma_j(f(x),n)e_j^{n}$. After that, taking $P=\{w_1^n,\dots,w_{\kappa+1}^n\}$, we can verify that $\|Df_x\|=\|(Df_x)^{\ast}\|$ and
	\begin{align*}
	\|(Df_x)^{\ast}\|\sigma_{\kappa+1}(f(x),n)&\geq  \|(Df_x)^{\ast}\|\|(Df_{f(x)}^n)^{\ast}\mid_{P}\|\\
	&\geq\|(Df_x^{n+1})^{\ast}\mid_{P}\|\geq\sigma_{\kappa+1}(x,n+1).
	\end{align*}
\end{proof}

Finally, we can prove that $(E_n(x))_n$ is a Cauchy sequence.

\begin{proof}[Proof of Lemma~\ref{cauchy sequence}] 
	Suppose $n<m$ and observe that
	$$dist(E_m(x),E_n(x)) \leq \sum_{j=1}^{m-n} dist(E_{n+j-1}(x),E_{n+j}(x)).$$
	Then, to prove the lemma, it is sufficient to show that
	the distance $dist(E_{n+1}(x),E_n(x))$ tends to zero exponentially fast with $n$ and uniformly in $x$.
	
	We will omit the point $x$ and simply rewrite $E_n$ and $E_n^{\perp}$ throughout the proof. In order to estimate the distance between $E_n$ and $E_{n+1}$, we take $w\in E_n$ as the farthest unit vector from $E_{n+1}$ and decompose $w=u+v$ in a unique way where $u \in E_{n+1}$ and $v \in  E_{n+1}^{\perp}$. Then, we obtain that  $\cos\varangle(E_n,E_{n+1}^{\perp})=\cos\varangle(w,v)=\|v\|$.
	
	On the one hand, since the image of $E_{n+1}$ and $E_{n+1}^{\perp}$ by $Df^{n+1}$ are orthogonal, we have that
	\begin{align*}
	\|Df^{n+1}(w)\|&=\|Df^{n+1}(u)\|+\|Df^{n+1}(v)\|\\
	&\geq \|Df^{n+1}(v)\|\geq \sigma_{\kappa+1}(x,n+1)\|v\|.
	\end{align*}
	On the other hand,
	\begin{align*}
	\|Df^{n+1}(w)\|\leq K \|Df^n(w)\|\leq K\sigma_{\kappa}(x,n),
	\end{align*}
	where $K= \sup_x \|Df_x\| < \infty$. Thus, by Lemmas \ref{ineq} and \ref{l.domsingular}, one concludes that:
	\begin{align*}
	\mathrm{dist}(E_{n+1}(x),E_n(x))&=\cos\varangle (E_{n+1}^{\perp}(x),E_n(x))\\
	&=\|v\|\leq \dfrac{K \sigma_{\kappa}(x,n)}{\sigma_{\kappa+1}(x,n+1)}\\
	&\leq K\dfrac{\sigma_{\kappa}(x,n)}{K_1\sigma_{\kappa+1}(x,n)}\leq \dfrac{Kc}{K_1} \lambda^n.
	\end{align*}
	
\end{proof}

It should be note that $(E_n(x))_n$ is a Cauchy for every $x \in M$. %sequence independently on $x \in M$.
Thus, we can define a continuous $\kappa$-dimensional subbundle of $TM$ by
$$E(x)=\lim E_n(x), \forall x \in M.$$

Recall that $\ker(Df_x^j) \subseteq E(x)$ for all $x \in M$ and all $j\geq 1$ since $\ker(Df_x^j)\subseteq E_n(x)$ for each $1\leq j\leq n$. 

We next prove that $E$ is transversal to the cone-field $\mathscr{C}$ and is a $Df$-invariant subbundle. 

\begin{lemma}
	For each $x \in M$, we have that $E(x)\cap \mathscr{C}(x)=\{0\}$ and $Df(E(x))\subseteq E(f(x))$.
\end{lemma}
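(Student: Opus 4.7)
The plan is to play two exponential estimates against each other. I will retroactively fix the parameter $\lambda$ in Lemma~\ref{lemma:gap} so small that $(\lambda\|Df\|)^{\ell}<c_{1}$, where
\[
c_{1}:=\inf\bigl\{\|Df^{\ell}_{x}(w)\|/\|w\|\,:\,x\in M,\ 0\neq w\in\mathscr{C}(x)\bigr\}>0
\]
is strictly positive by continuity of $\mathscr{C}$, compactness of $M$, and $\ker(Df^{\ell})\cap\mathscr{C}=\{0\}$. Iterating the invariance $Df^{\ell}(\mathscr{C}(y))\subset\mathscr{C}(f^{\ell}(y))$ yields the lower bound $\|Df^{n\ell}(w)\|\geq c_{1}^{n}\|w\|$ for every $w\in\mathscr{C}(y)$. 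For the complementary upper bound on $E$, I write a unit $v\in E(x)$ as $v=v_{n}+v_{n}^{\perp}$ with $v_{n}\in E_{n}(x)$ and $v_{n}^{\perp}\in E_{n}^{\perp}(x)$. The Cauchy rate extracted from the proof of Lemma~\ref{cauchy sequence} gives $\|v_{n}^{\perp}\|\leq C'\lambda^{n}$ uniformly in $x$; Lemma~\ref{l.domsingular} gives $\|Df^{n}(v_{n})\|\leq\sigma_{\kappa}(x,n)\leq c\lambda^{n}\|Df\|^{n}$; and crudely $\|Df^{n}(v_{n}^{\perp})\|\leq\|Df\|^{n}\|v_{n}^{\perp}\|$. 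Orthogonality of the two images then produces a uniform constant $\tilde C$ with
\[
\|Df^{n}(v)\|\,\leq\,\tilde C\,(\lambda\|Df\|)^{n}\qquad\text{for every }n\geq 1.
\]

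With these two bounds the first assertion is immediate: a unit vector in $E(x)\cap\mathscr{C}(x)$ would have to satisfy $c_{1}^{n}\leq\tilde C(\lambda\|Df\|^{\ell})^{n}$ for every $n$, contradicting the choice of $\lambda$ for $n$ large. Since $F(x)\subset\mathrm{int}(\mathscr{C}(x))$, this forces $E(x)\cap F(x)=\{0\}$ and hence, by a dimension count, $T_{x}M=E(x)\oplus F(x)$.

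For the $Df$-invariance, I fix $v\in E(x)$ and decompose $Df(v)=a+b$ according to the splitting $T_{f(x)}M=E(f(x))\oplus F(f(x))$ obtained in the previous step. Since $b\in F\subset\mathscr{C}$ and $F$ is $Df^{\ell}$-invariant by Claim~\ref{subbundle:F}, the cone lower bound iterates to $\|Df^{n\ell}(b)\|\geq c_{1}^{n}\|b\|$; the upper bound applied at $f(x)$ to $a\in E(f(x))$ gives $\|Df^{n\ell}(a)\|\leq\tilde C(\lambda\|Df\|)^{n\ell}\|a\|$; and applied to $v$ itself it gives $\|Df^{n\ell+1}(v)\|\leq\tilde C(\lambda\|Df\|)^{n\ell+1}$. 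The triangle inequality, using $Df^{n\ell+1}(v)=Df^{n\ell}(a)+Df^{n\ell}(b)$, then forces
\[
c_{1}^{n}\|b\|\,\leq\,\|Df^{n\ell}(b)\|\,\leq\,\|Df^{n\ell+1}(v)\|+\|Df^{n\ell}(a)\|\,\leq\,\tilde C''\,(\lambda\|Df\|^{\ell})^{n},
\]
so $\|b\|$ is dominated by a null sequence and must vanish, giving $Df(v)=a\in E(f(x))$.

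The main technical friction is the crude estimate $\sigma_{d}(x,n)\leq\|Df\|^{n}$ used in the upper bound on $E$; it is precisely this step that compels the retroactive smallness of $\lambda$ in Lemma~\ref{lemma:gap}, so that the domination gap $\lambda$ beats the cone-growth rate $c_{1}$. A sharper control of the norm on $E_{n}^{\perp}$ would allow a larger $\lambda$, but is not needed for the statement at hand.
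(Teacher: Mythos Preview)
Your argument is correct, modulo a harmless notational slip: in two places you write $(\lambda\|Df\|^{\ell})^{n}$ where the intended quantity is $((\lambda\|Df\|)^{\ell})^{n}$, matching your earlier condition $(\lambda\|Df\|)^{\ell}<c_{1}$. The logic is unaffected. One point worth making explicit is that $E(x)=\lim E_{n}(x)$ is defined purely through the singular-value decomposition of $Df^{n}_{x}$ and is therefore independent of the parameter $\lambda$; retroactively shrinking $\lambda$ only sharpens the rate estimates in Lemmas~\ref{l.domsingular} and~\ref{cauchy sequence} without altering the object $E$.

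Your route differs genuinely from the paper's. For transversality the paper argues that each $E_{n}(x)$ misses $\mathscr{C}(x)$ (via the orthogonality of $Df^{n}(E_{n})$ and $Df^{n}(E_{n}^{\perp})$ together with Lemma~\ref{lemma:n}) and then passes to the limit; for invariance it runs an inner-product computation with the adjoint $(Df^{n})^{\ast}$ to show that $\cos\varangle(\mathbb{R}Df(v_{n}),E_{n}^{\perp}(f(x)))\to 0$. You instead set up a single growth-rate dichotomy---exponentially slow on $E$, uniformly bounded below on $\mathscr{C}$---and read off both conclusions at once, the second via the decomposition $T_{f(x)}M=E(f(x))\oplus F(f(x))$ already available from the first part and Claim~\ref{subbundle:F}. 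Your approach is more unified and avoids the adjoint machinery, at the cost of the extra tuning of $\lambda$; the paper's approach works for any $\lambda\in(0,1)$ but treats the two assertions by separate mechanisms.
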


\begin{proof}
	We first prove that $E(x)\cap \mathscr{C}(x)=\{0\}$. For that, recall that $Df^n(E_n(x))$ and $Df^n(E_n^{\perp}(x))$ are perpendicular and, by Lemma~\ref{lemma:n}, $Df^n(E_n^{\perp}(x))$ is contained in $\mathscr{C}(x)$. Then, the intersection between $Df^n(E_n(x))$ and the cone-field is trivial, implying that $E_n(x)\cap \mathscr{C}(x)=\{0\}$. Thus, we can conclude that the intersection between $E(x)$ and $\mathscr{C}(x)$ is also trivial.
	
	To show the invariance property, we will prove that the distance between $Df(E_{n+1}(x))$ and $E_n(f(x))$ goes to zero as $n$ tends to infinity. Before, we fix $\rho>0$ such that
	$$\|Df(v)\|\geq \rho \|v\|$$ for every $v \in \ker(Df_x)^{\perp}$ for every $x \in M$.
	
	We start the proof of invariance, taking $v \in E(x)$ such that $Df(v)=0$. Then, clearly, $Df(v) \in E(f(x))$.
	
	Now, we take a unit vector $v \in \ker(Df_x)^{\perp}\cap E(x)$ and choose a sequence of unit vectors $v_n \in E_{n+1}(x)\cap \ker(Df_x)^{\perp}$ such that $v_n \to v$. Then, we write $Df(v_n)=u_n+z_n$ where $u_n \in E_n(f(x))$ and $z_n \in E_n^{\perp}(f(x))$.
	
	Consider two orthogonal bases
	$\{e_1^n,\dots,e_d^n\}$ and $\{w_1^n,\dots,w_d^n\}$ of $T_{f(x)}M$ and  $T_{f^{n+1}(x)}M$, respectively, so that, for each $1\leq i \leq d$, $Df_{f(x)}^n(e_i^n)=\sigma_i(f(x),n) w_i^n$ and $(Df_{f(x)}^n)^{\ast}(w_i^n)=\sigma_i(f(x),n) e_i^n$. Take a vector $\tilde{z}_n \in \mathrm{span}\{w_{\kappa+1}^n,\cdots, w_d^n\}$ such that $(Df_{f(x)}^n)^{\ast}(\tilde{z}_n)=z_n$ and, then, 
	$\|z_n\|\geq \sigma_{\kappa+1}(f(x),n)\|\tilde{z}_n\|$. Thus, fixing $K=\sup \|Df_x\|$ and using Lemma~\ref{ineq}, we obtain that
	\begin{align*}
	\cos\varangle(\mathbb{R}Df(v_n),z_n)&=\dfrac{\langle Df_x(v_n),z_n \rangle}{\|Df_x(v_n)\|\|z_n\|}\\
	&=\dfrac{\langle Df_x(v_n), (Df_{f(x)}^n)^{\ast}(\tilde{z}_n)\rangle}{\|Df_x(v_n)\|\|z_n\|}\\
	&=\dfrac{\langle Df_x^{n+1}(v_n),\tilde{z}_n\rangle}{\|Df_x(v_n)\|\|z_n\|}\\
	& \leq \dfrac{\|Df_x^{n+1}(v_n)\|\|\tilde{z}_n\|}{\|Df_x(v_n)\|\|z_n\|}\\
	&\leq\dfrac{\sigma_{\kappa}(x,n+1)}{\sigma_{\kappa+1}(f(x),n)} \cdot\dfrac{\|v_n\|}{\|Df(v_n)\|}.
	\end{align*}
	Then, by Lemmas~\ref{lemma:gap} and \ref{ineq}, we obtain that
	\begin{align*}
	\cos\varangle(\mathbb{R}Df(v_n),z_n)&\leq
	\dfrac{\sigma_{\kappa}(x,n+1)}{\sigma_{\kappa+1}(f(x),n)} \cdot\dfrac{\|v_n\|}{\|Df(v_n)\|}\\
	&\leq \dfrac{\sigma_{\kappa}(x,n+1)}{\sigma_{\kappa+1}(x,n+1)/K}\cdot \dfrac{1}{\rho}
	\leq \dfrac{1}{\rho} Kc\lambda^{n+1}.
	\end{align*}
	
	Since $\cos\varangle(\mathbb{R}Df(v_n), E_n^{\perp}(f(x)))\leq \cos\varangle(\mathbb{R}Df(v_n),z_n)$, we have that $Df(v_n)$ and $E_n^{\perp}(f(x))$ are getting perpendicular when $n$ tends to infinity. Thus, $Df(v)=\lim Df(v_n)$ is perpendicular to $E(f(x))^{\perp}$ and, consequently, $Df(v)$ belongs to $E(f(x))$.
\end{proof}

So far, we have showed that $E\oplus F$ is nontrivial splitting on $TM_f$. Finally, we will next show that it is a dominated splitting for $f$ and so prove Proposition~\ref{prop:conedom}.

\begin{proof}[Proof of Proposition~\ref{prop:conedom}]
	We have proved so far that under the assumption of Proposition~\ref{prop:conedom} the splitting $E\oplus F$ satisfy for each orbit $(x_i)_i$ that $Df(E(x_i)) \subseteq E(f(x_i))$ and $Df^{\ell}(F(x_i))=F(f^{\ell}(x_i))$.
	Moreover, as the intersection between $E$ and the cone-field $\mathscr{C}$ is trivial, we conclude by (ii) in Lemma~\ref{lemma:gap} (recall that we are assuming $N=1$) that $$\dfrac{\|Df_{x_i}^{n\ell}\mid_{E(x_i)}\|}{\m(Df_{x_i}^{n\ell}\mid_{F(x_i)})}<\lambda^n.$$
	Thus, $E\oplus F$ is a dominated splitting for $f^{\ell}$.
	
	To complete the proof, we use that $E\oplus Df(F)$ is also a dominated splitting for $f^{\ell}$ and get that $Df(F)=F$ by the uniqueness of dominated splitting (see Lemma~\ref{F:uniqueness}).
\end{proof}

\begin{remark}
	It should be emphasized that the existence of a dominated splitting is an open property in the $C^1$ topology. That is, if $f$ admits an invariant cone-field $\mathscr{C}$ transversal to the kernel, then there exists a neighborhood $\mathcal{U}$ of $f$ in $\mathrm{End}^1(M)$ such that the cone-field $\mathscr{C}$ so is for each $g \in \mathcal{U}$.
\end{remark}

\section{Proof of Theorem \ref{thm A}}\label{sec:proof of thm A}
Let us recall that $f_0: M \to M$ is a robustly transitive endomorphism displaying critical points and $\mathcal{F}_0$ is a subset of $\mathcal{U}_0$ which accumulates on $f_0$ and for each $f \in \mathcal{F}_0$ has $\mathrm{int}(\mathrm{Cr}_{\kappa}(f))\neq \emptyset$.

We next prove that $\Lambda_f$ is dense in $M_f$ for every $f \in \mathcal{F}_0$.

\begin{lemma}\label{lambdaset-non-empty}
	If $f$ is a transitive endomorphism and $\mathrm{int}(\mathrm{Cr}_{\kappa}(f))\neq \emptyset$ then $\Lambda_f$ is a nonempty set. Moreover, $\Lambda_f$ is a dense subset of $M_f$.
\end{lemma}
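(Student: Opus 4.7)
\medskip

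\noindent\textbf{Proof proposal.} The plan is to lift the problem to the inverse limit space and use a standard Baire category argument. Let $U := \mathrm{int}(\mathrm{Cr}_{\kappa}(f))$, which is by hypothesis a nonempty open subset of $M$. Consider the induced homeomorphism $\hat f : M_f \to M_f$ from \eqref{inverse-limit-space}, and let
$$\hat U \ := \ \pi_0^{-1}(U) \ = \ \{(x_i)_i \in M_f : x_0 \in U\},$$
which is open in $M_f$. A point $(x_i)_i \in M_f$ belongs to $\Lambda_f$ precisely when its $\hat f$-orbit meets $\hat U$ infinitely often in both forward and backward time. So it suffices to produce a residual set of points in $M_f$ whose bilateral $\hat f$-orbit is dense.

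First I would check that $\hat f$ is itself topologically transitive on $M_f$. Since $f$ is continuous, transitive, and $M$ is compact, the image $f(M)$ is closed and dense, hence equals $M$, so $M_f \neq \emptyset$ and $\hat f$ is a homeomorphism. Using that the identity $x_{i+1} = f(x_i)$ lets one replace a cylinder condition on finitely many coordinates by a single condition on the most negative one, a basis for the topology of $M_f$ is given by the sets $\pi_{-l}^{-1}(W)$ with $l \geq 0$ and $W \subseteq M$ open. Given two basic sets $\pi_{-l_1}^{-1}(W_1)$ and $\pi_{-l_2}^{-1}(W_2)$, transitivity of $f$ provides $y \in W_1$ and $m \geq 0$ arbitrarily large with $f^m(y) \in W_2$; extending $y$ to a full pre-orbit by surjectivity of $f$ yields a point of $\pi_{-l_1}^{-1}(W_1)$ whose $\hat f^n$-image, for the suitable $n = m + l_2 - l_1 \geq 0$, lies in $\pi_{-l_2}^{-1}(W_2)$. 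Hence $\hat f$ is transitive on $M_f$.

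Next, since $\hat f$ is a transitive homeomorphism of the compact metric space $M_f$ (which has no isolated points, as $M$ is a positive-dimensional manifold), the standard Baire argument applies: if $\{\hat V_n\}_n$ is a countable basis of $M_f$, then the set of points with dense forward $\hat f$-orbit equals $\bigcap_n \bigcup_{k \geq 0} \hat f^{-k}(\hat V_n)$, each factor being open (by continuity of $\hat f$) and dense (by transitivity of $\hat f$), hence residual. The same argument applied to $\hat f^{-1}$, which is also transitive, shows that the set of points with dense backward orbit is residual, and so is the intersection.

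Finally, any such point $(x_i)_i$ has its bilateral $\hat f$-orbit dense in $M_f$, and therefore visits the open set $\hat U$ for infinitely many positive indices and infinitely many negative indices, i.e.\ $x_n \in U \subseteq \mathrm{Cr}_{\kappa}(f)$ along an unbounded set of $n > 0$ and of $n < 0$. Thus this residual set is contained in $\Lambda_f$, proving both that $\Lambda_f$ is nonempty and that it is dense in $M_f$. The only step that requires any genuine care is verifying that $\hat f$ is transitive on $M_f$, since there one has to translate transitivity of $f$ on $M$ through the correct basis for the inverse limit topology and use surjectivity of $f$ to extend forward data into full orbits; everything else is formal.
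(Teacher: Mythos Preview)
Your proof is correct and follows essentially the same approach as the paper: lift to the inverse limit, use that the induced shift is a transitive homeomorphism, apply Baire to obtain a residual set of points with dense bilateral orbit, and observe that such orbits visit $\mathrm{int}(\mathrm{Cr}_\kappa(f))$ infinitely often in both time directions. The only difference is that the paper invokes a reference for the transitivity of $\hat f$ on $M_f$, whereas you supply a direct argument via cylinder sets and surjectivity of $f$.
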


\begin{proof}
	On the one hand, the transitivity of $f$ on $M$ implies that $f: (x_i)_i \mapsto (f(x_i))_i$ is a transitive homeomorphism on $M_f$ (see \cite[Theorem 3.5.3]{AH}). Then, considering a base of open sets $\{B_n\}_n$ of $M_{f}$, we have that $A_n^{+}=\cup_{j\geq 0}f^j(B_n)$ and $A_n^{-}=\cup_{j\geq 0}f^{-j}(B_n)$ are dense open sets in $M_{f}$, for each $n$. By Baire category theorem, we get that $\cap_n (A_n^{+}\cap A_n^{-})$ is a residual set in $M_f$ which consists of the points which their backward and forward orbits are dense in $M_{f}$. On the other hand, since $\mathrm{Cr}_{\kappa}(f)$ has non-empty interior, we have that any orbit in $\cap_n A_n^{+}\cap A_n^{-}$ visits the interior of $\mathrm{Cr}_{\kappa}(f)$ infinitely many times for the past and the future and, hence, it belongs to $\Lambda_{f}$. Thus, $\Lambda_f$ is nonempty and dense in $M_f$.
\end{proof}

We are now assuming that Theorem~\ref{thm B} holds. Then, we have that there are $\alpha>0$ and an integer $\ell>0$ such that $E_f\oplus F_f$ is an $(\alpha,\ell)$-dominated splitting over $\Lambda_f$ for every $f \in \mathcal{F}_0$. Since $\Lambda_f$ is dense on $M_f$ by Lemma~\ref{lambdaset-non-empty}, we use Proposition~\ref{cont-ext} to conclude that $E_f\oplus F_f$ is an $(\alpha,\ell)$-dominated splitting over $M_f$.  This implies Theorem~\ref{thm A1}, as we have already mentioned.

Now we will prove Theorem A. In order to do this, we will need to push the domination on $f \in \mathcal{F}_0$ to $f_0$. %and, for this, we will deal with the cone-field on $M$ instead of the splitting on the space of orbits.}

By the equivalence in Proposition~\ref{prop:domcone}, we can use the uniformity of the dominated splitting to obtain the cone-field $\mathscr{C}_{E_f}:=\{\mathscr{C}_{E_f}(x, \alpha):x \in M\}$ which its dual cone-field $\mathscr{C}_{E_f}^{\ast}$ is $\ell$-invariant uniformly in $f \in \mathcal{F}_0$, up to increasing the constant $\ell$. Then, we consider a sequence of $f_n \in \mathcal{F}_0$ converging to $f_0$, and we define $E_0(x)$ and $\mathscr{C}_{E_0}^{\ast}(x,\alpha)$ respectively as the limit of $E_n(x)$ and $\mathscr{C}_{E_n}^{\ast}(x,\alpha)$, which satisfy $E_0(x)\cap \mathscr{C}_{E_0}^{\ast}(x,\alpha)=\{0\}$.

Finally, it remains to show that $\mathscr{C}_{E_0}^{\ast}(x,\alpha)$ is invariant and $\ker(Df_0^m)\subseteq E_0(x)$ for every $x\in M$ and $m\geq 1$. For that, we first use that $\mathscr{C}_{E_n}^{\ast}(x,\alpha)$ is $\ell$-invariant for every $n$ to conclude that $\mathscr{C}_{E_0}^{\ast}(x,\alpha)$ is also $\ell$-invariant. Secondly, to show that $\ker(Df^m_0)$ at $x \in M$ is contained in $E_0(x)$, it is enough prove that if $u \in \ker(Df_0^m)$ at $x \in M$ then $u \in E_n(x)$ for large $n\geq 1$ since $E_0(x)$ is the limit of $E_n(x)$. Thus, suppose that $u \notin E_n(x)$ for infinitely many $n$. Considering $E_n\oplus F_n$ as an $(\alpha,\ell)$-dominated splitting for $f_n$, we can conclude that for $k$ large enough $\|Df_n^{k\ell}(u)\|\approx \m(Df_n^{k\ell}\mid_{F_n})$, which is away from zero independently on $n$. This contradicts the fact that $Df_n^{k\ell}(u)$ must have to converge to $Df_0^{k\ell}(u)=0$ for $k\ell\geq m$. \qed

\subsection{Proof of Corollary \ref{cor:topobst}}

Let $M=S^{2n}$ be an even dimensional sphere and a robustly transitive map $f: M \to M$.
Let $E \subseteq TM$ be a $\kappa$-dimensional subbundle, that is, a map $E: M \to \mathrm{Grass}_{\kappa}(TM)$ so that $E(x) \in \mathrm{Grass}_{\kappa}(T_xM)$ for every $x \in M$. Note that  we can associate an Euler class to $E$ which is an element of $H^{\kappa}(M),$ the $\kappa$-th dimensional de Rham cohomology of $M$. Suppose $E\neq TM$ (or equivalently, $\kappa \neq 2n=\dim(M)$), then we have that its Euler class is zero since all intermediate homologies vanish on spheres. On the other hand, we have that $E^\perp$ is another subbundle of dimension $2n-\kappa$ complementary to $E$ in $TM$, then it follows that $E \oplus E^\perp = TM$. Therefore, the Euler class of $TM$ (which is the Euler characteristic of $M$ times the class of the volume form $M$ in $H^{2n}(M)$) is the product of the Euler classes of $E$ and $E^\perp$. Hence, the Euler characteristic of $M$ is zero, but this is a contradiction since the Euler characteristic of an even dimensional sphere is $2$. This argument shows that even dimensional spheres do not admit non-trivial subbundles. This argument is classical and is taken from \cite{AB}, but it can be also found in \cite{Milnor-Stasheff}[Property 9.6]. 

Now, we can apply Theorem \ref{thm A} which implies that if an endomorphism $f$ is robustly transitive, then it cannot have critical points as it would imply the existence of a non-trivial subbundle.  Therefore we conclude that it is a  local diffeomorphism (covering map) on $M$. However, since $M$ is simply connected, the endomorphism $f$ should be a diffeomorphism. We can use now \cite{BDP} to conclude the corollary.  \qed

\section{Proof of Theorem \ref{thm B}}\label{sec:proof of thm B}

Here, let $f_0$ be an endomorphism displaying critical points and $\mathcal{F}$ be a family of endomorphisms converging to $f_0$ satisfying:
\begin{align}\label{eq:assumption}
\Lambda_f\neq \emptyset \ \ \text{and} \ \ \dim \ker(Df^m)\leq \kappa, \, \forall f \in \mathcal{F} \ \ \text{and} \ \ m\geq 1. 
\end{align}

First, let us recall that for each $f \in \mathcal{F}$ and for each $x_i$ in the orbit $(x_i)_i \in \Lambda_f$ we have defined the candidate to dominated splitting by
$$E(x_i)=\ker(Df^{m_f+\tau_i^{+}}_{x_i}) \ \ \text{and} \ \ F(x_i)=\im(Df^{|\tau_i^{-}|}_{x_{i+\tau_i^{-}}}),$$
where 
\begin{align*}
\begin{array}{c}
\tau^{+}_i=\min\{n\geq 0:x_{i+n} \in \mathrm{Cr}_{\kappa}(f)\}, \ \  \text{and} \ \
\tau^{-}_i=\max\{n\leq -m_f:x_{i+n} \in \mathrm{Cr}_{\kappa}(f)\}.
\end{array}
\end{align*}

The next statement will be useful to prove that $E$ and $F$ are invariant subbundles over $\Lambda_f$.

\begin{lemma}\label{lemma:1}
	If $x \in \mathrm{Cr}_{\kappa}(f)$ and $n\geq m_f$ then $\ker(Df^n_x)$ and $\im(Df^n_x)$ are $\kappa$- and $(d-\kappa)$-dimensional, respectively. Moreover, if $x_j=f^j(x)$, for each $0\leq j\leq n$ with $n\geq m_f$, such that $x_0,\, x_n \in \mathrm{Cr}_{\kappa}(f)$ then $T_{x_j}M=\ker(Df^{m_f+{n-j}}_{x_j})\oplus \im(Df^j_{x_0})$, for each $m_f\leq j \leq n$.
\end{lemma}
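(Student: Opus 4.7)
The plan is to establish the two statements in sequence, relying on the uniform upper bound $\dim \ker(Df^m) \leq \kappa$ from \eqref{eq:assumption}, the nestedness $\ker(Df^k_x) \subseteq \ker(Df^{k+1}_x)$, and the rank-nullity theorem.

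For the first statement, fix $x \in \mathrm{Cr}_{\kappa}(f)$ and $n \geq m_f$. By the defining property of $\mathrm{Cr}_{\kappa}(f)$, $\dim \ker(Df^{m_f}_x) = \kappa$, so nestedness forces $\dim \ker(Df^n_x) \geq \kappa$. The uniform bound then pins it to exactly $\kappa$, and rank-nullity yields $\dim \im(Df^n_x) = d - \kappa$.

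For the second statement, I will exploit the two factorizations
\[
Df^{m_f + n}_{x_0} = Df^{m_f + n - j}_{x_j} \circ Df^{j}_{x_0}, \qquad Df^{m_f + n - j}_{x_j} = Df^{m_f}_{x_n} \circ Df^{n - j}_{x_j},
\]
applying Part~1 at $x_0$ (for exponents $j$ and $m_f + n$) and at $x_n$ (for exponent $m_f$). Set $E_j = \ker(Df^{m_f + n - j}_{x_j})$ and $F_j = \im(Df^{j}_{x_0})$; Part~1 immediately gives $\dim F_j = d - \kappa$. The key observation is that $\ker(Df^j_{x_0}) \subseteq \ker(Df^{m_f + n}_{x_0})$ and both are $\kappa$-dimensional, hence equal; combined with the left factorization this forces $E_j \cap F_j = \{0\}$, because any $v = Df^j_{x_0}(w) \in F_j \cap E_j$ satisfies $Df^{m_f + n}_{x_0}(w) = 0$, so $w \in \ker(Df^j_{x_0})$ and $v = 0$. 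To see that $\dim E_j = \kappa$, the right factorization sandwiches $\im(Df^{m_f + n - j}_{x_j})$ between $\im(Df^{m_f + n}_{x_0})$ (dimension $d - \kappa$) from below and $\im(Df^{m_f}_{x_n})$ (also dimension $d - \kappa$) from above, so $\dim \im(Df^{m_f + n - j}_{x_j}) = d - \kappa$ and rank-nullity gives $\dim E_j = \kappa$. Dimensions then add to $d$ and the direct sum fills $T_{x_j}M$.

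The argument is purely linear-algebraic once Part~1 is in hand, and I do not anticipate a real obstacle. The only delicate point is bookkeeping the compositions so that the $\kappa$-critical hypotheses at both endpoints $x_0$ and $x_n$ simultaneously pin all the intermediate kernels and images to their extremal dimensions; the uniform bound $\dim \ker(Df^m) \leq \kappa$ is what makes this squeeze work.
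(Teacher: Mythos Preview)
Your proof is correct and follows essentially the same route as the paper's: Part~1 is identical, and for Part~2 both arguments use the factorization $Df^{m_f+n}_{x_0} = Df^{m_f+n-j}_{x_j}\circ Df^{j}_{x_0}$ together with the fact that $\dim\ker(Df^{j}_{x_0})=\dim\ker(Df^{m_f+n}_{x_0})=\kappa$ to force trivial intersection, and the hypothesis $x_n\in\mathrm{Cr}_\kappa(f)$ to pin $\dim\ker(Df^{m_f+n-j}_{x_j})=\kappa$. The only cosmetic difference is that the paper phrases the trivial-intersection step as a contradiction (a nonzero vector in the intersection would produce a $(\kappa+1)$-dimensional kernel), whereas you phrase it as the equality $\ker(Df^{j}_{x_0})=\ker(Df^{m_f+n}_{x_0})$; and the paper obtains $\dim E_j=\kappa$ from the kernel inequality \eqref{eq:inv} rather than your image sandwich, but these are the same computation read through rank--nullity.
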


\begin{proof}
	By the assumption \eqref{eq:assumption} and $Df^{m+n}_{y}=Df^{m}_{f^n(y)} \cdot Df^n_{y}$, we can get that
	\begin{align}\label{eq:inv}
	\max\{\dim \ker(Df^{n}_{y}), \dim \ker(Df^{m}_{f^n(y)})\}\leq \dim \ker(Df^{m+n}_y)\leq \kappa.
	\end{align}
	Then, when $y$ or $f^n(y)$ belongs to $\mathrm{Cr}_{\kappa}(f)$, we have that $$\dim \ker(Df^{m_f+n}_y)=\kappa \ \ \text{for all} \ \ n\geq 0.$$
	In particular, $\ker(Df^m_x)$ is $\kappa$-dimensional for every $x \in \mathrm{Cr}_{\kappa}(f)$ and $m\geq m_f$, and so, $\im(Df^m_x)$ is $(d-\kappa)$-dimensional by the theorem of kernel and image. To prove the second part, we assume that there exists a non-zero vector $v \in T_{x_0}M$ such that $u=Df^j_{x_0}(v)$ is a non-zero vector in $\ker(Df^{m_f+n-j}_{x_j})\cap \im(Df^j_{x_0})$. Note that $v \notin \ker(Df^j_{x_0})$ and $v \in \ker(Df^{m_f+n}_{x_0})$. It contradicts the assumption \eqref{eq:assumption} since $\ker(Df^{m_f+n}_{x_0})$ contains $\ker(Df^{j}_{x_0})\oplus \mathrm{span}\{v\}$ that is $(\kappa+1)$-dimensional for any $m_f \leq j \leq n$.
\end{proof}

\begin{invproplemma}\label{lemma:invariance}
	The following statements hold:
	\begin{enumerate}[label=$\mathrm{(\roman*)}$]
		\item the subbundles $E$ and $F$ are $\kappa$ and $(d-\kappa)$-dimensional, respectively;
		\item for every orbit $(x_i)_i \in \Lambda_f$, one has that $T_{x_i}M=E(x_i)\oplus F(x_i), \forall i \in \mathbb{Z}$;
		\item the subbundles $E$ and $F$ are invariant.
	\end{enumerate}
\end{invproplemma}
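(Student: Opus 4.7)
My plan is to reduce everything to Lemma~\ref{lemma:1} and the bound \eqref{eq:assumption}, treating the three items in order, with item (iii) split into the cases where $x_i$ has just visited, is currently in, or has not yet reached $\mathrm{Cr}_{\kappa}(f)$.

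For (i) I would observe that $f^{\tau_i^{+}}(x_i)=x_{i+\tau_i^{+}}\in\mathrm{Cr}_{\kappa}(f)$, so that applying the inequality \eqref{eq:inv} inside the proof of Lemma~\ref{lemma:1} to $y=x_i$, $m=m_f$, $n=\tau_i^{+}$ gives $\dim\ker(Df^{m_f+\tau_i^{+}}_{x_i})=\kappa$, hence $\dim E(x_i)=\kappa$. Similarly, since $x_{i+\tau_i^{-}}\in\mathrm{Cr}_{\kappa}(f)$ and $|\tau_i^{-}|\geq m_f$, the first part of Lemma~\ref{lemma:1} yields $\dim\im(Df^{|\tau_i^{-}|}_{x_{i+\tau_i^{-}}})=d-\kappa$, hence $\dim F(x_i)=d-\kappa$.

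For (ii) I would invoke the second part of Lemma~\ref{lemma:1} along the finite orbit segment starting at $x_0:=x_{i+\tau_i^{-}}$ and ending at $x_n:=x_{i+\tau_i^{+}}$, both of which lie in $\mathrm{Cr}_{\kappa}(f)$, with $n=\tau_i^{+}+|\tau_i^{-}|\geq m_f$. Taking $j=|\tau_i^{-}|$ one has $m_f\leq j\leq n$ (since $\tau_i^{+}\geq 0$), and the position $j$ in the reparametrized orbit is precisely $x_i$. The lemma then gives
\begin{equation*}
T_{x_i}M=\ker(Df^{m_f+\tau_i^{+}}_{x_i})\oplus\im(Df^{|\tau_i^{-}|}_{x_{i+\tau_i^{-}}})=E(x_i)\oplus F(x_i).
\end{equation*}

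Item (iii) is the main obstacle, and I would split it into four subcases according to whether $x_i\in\mathrm{Cr}_{\kappa}(f)$ and whether $x_{i+1-m_f}\in\mathrm{Cr}_{\kappa}(f)$. For $E$, when $\tau_i^{+}>0$ one has $\tau_{i+1}^{+}=\tau_i^{+}-1$, and the identity $Df^{m_f+\tau_{i+1}^{+}}_{x_{i+1}}\circ Df_{x_i}=Df^{m_f+\tau_i^{+}}_{x_i}$ gives $Df(E(x_i))\subseteq E(x_{i+1})$ immediately; when $\tau_i^{+}=0$ one has $Df(v)\in\ker(Df^{m_f-1}_{x_{i+1}})\subseteq\ker(Df^{m_f+\tau_{i+1}^{+}}_{x_{i+1}})=E(x_{i+1})$. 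For $F$, when $x_{i+1-m_f}\notin\mathrm{Cr}_{\kappa}(f)$ one has $\tau_{i+1}^{-}=\tau_i^{-}-1$ and $x_{i+1+\tau_{i+1}^{-}}=x_{i+\tau_i^{-}}$, so
\begin{equation*}
Df(F(x_i))=Df\bigl(\im(Df^{|\tau_i^{-}|}_{x_{i+\tau_i^{-}}})\bigr)=\im(Df^{|\tau_i^{-}|+1}_{x_{i+\tau_i^{-}}})=F(x_{i+1}).
\end{equation*}
When $x_{i+1-m_f}\in\mathrm{Cr}_{\kappa}(f)$ instead, $\tau_{i+1}^{-}=-m_f$ and I would use the factorization $Df^{|\tau_i^{-}|+1}_{x_{i+\tau_i^{-}}}=Df^{m_f}_{x_{i+1-m_f}}\circ Df^{|\tau_i^{-}|+1-m_f}_{x_{i+\tau_i^{-}}}$ to get $Df(F(x_i))\subseteq\im(Df^{m_f}_{x_{i+1-m_f}})=F(x_{i+1})$. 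The remaining subtlety — this is where the plan could fail if one is not careful — is to upgrade inclusion to equality; for this I would use the dimension count from (i) combined with injectivity of $Df_{x_i}|_{F(x_i)}$, which follows because any $w\in F(x_i)\cap\ker Df_{x_i}$ would lie in $\ker(Df^{m_f+\tau_i^{+}}_{x_i})=E(x_i)$, hence in $E(x_i)\cap F(x_i)=\{0\}$ by (ii). This forces $\dim Df(F(x_i))=d-\kappa=\dim F(x_{i+1})$ and closes the argument.
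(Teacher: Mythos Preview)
Your proof is correct and follows essentially the same route as the paper: items (i) and (ii) are reduced to Lemma~\ref{lemma:1} exactly as the authors do, and item (iii) is handled by the same case split on whether $x_{i+1-m_f}\in\mathrm{Cr}_{\kappa}(f)$. The only cosmetic difference is in the second $F$-case: the paper applies Lemma~\ref{lemma:1} at the intermediate point $x_{i+1-m_f}$ to obtain $Df(F(x_i))=F(x_{i+1})$ directly, whereas you first get inclusion and then upgrade to equality via injectivity of $Df|_{F(x_i)}$ and the dimension count---both arguments are equivalent.
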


\begin{proof}
	The items (i) and (ii) follow from Lemma~\ref{lemma:1} since for each $x_i$ in the orbit $(x_i)_i$, we have that $x_{i+\tau_i^{-}}, x_{i+\tau_i^{+}}$ belong to $\mathrm{Cr}_{\kappa}(f)$ and $|\tau_i^{-}|\geq m_f$.
	
	In order to prove the item (iii), we shall observe that $\tau_{i}^{-}= \tau_{i+1}^{-}+1$ and $|\tau_{i+1}^{-}|=|\tau_{i}^{-}|+1$ if $x_{i+1-m_f} \notin \mathrm{Cr}_{\kappa}(f)$. Thus, $$Df(F(x_i))=Df(\mathrm{Im}(Df_{x_{i+\tau_i^{-}}}^{|\tau_i^{-}|}))=\mathrm{Im}(Df_{x_{i+1+\tau_{i+1}^{-}}}^{|\tau_i^{-}|+1})=F(x_{i+1}).$$
	On the other hand, if $x_{i+1-m_f} \in \mathrm{Cr}_{\kappa}(f)$ then $\tau_{i+1}^{-}=-m_f$ and, hence, $\tau_i^{-}<~\tau_{i+1}^{-}$. By Lemma~\ref{lemma:1}, we have that 
	$$T_{x_{i+1+\tau_{i+1}^{-}}}M=\ker(Df^{|\tau_{i+1}^{-}|}_{x_{i+\tau_{i+1}^{-}}})\oplus \im(Df^{|\tau_i^{-}|+1-m_f}_{x_{i+\tau_i^{-}}})
	$$
	since $x_{i+1+\tau_{i+1}^{-}}=x_{i+1-m_f}=f^{|\tau_i^{-}|+1-m_f}(x_{i+\tau_i^{-}})$. Therefore, $$Df(F(x_i))=\mathrm{Im}(Df_{x_{i+\tau_i^{-}}}^{|\tau_i^{-}|})=Df^{m_f}(\im(Df^{|\tau_i^{-}|+1-m_f}_{x_{i+\tau_i^{-}}}))=F(x_i).$$
	
	The invariance of $E$ follows from the assumption \eqref{eq:assumption} and from the fact that $E$ is $\kappa$-dimensional.
\end{proof}

The following lemma will be often used throughout this paper.

\begin{lemma}\label{Franks-version}
	There exists a small real number $\varepsilon >0$ such that if $f \in \mathcal{U}$ whose $\Lambda_f \neq \emptyset$ then for every finite collection $\Sigma$ in $(x_i)_i \in \Lambda_f$, say $\Sigma=\{x_0,...,x_{n-1}\}$, and every collection of linear maps $L_i:T_{x_i}M \to T_{f(x_i)}M$ for $i=0,1,\dots,n-1$, satisfying the following two conditions:
	$$\exists v \in F(x_0) \ \ \text{such that} \ \ L_n\cdots L_1(v)\in E(x_n) \ \ \text{and} \ \ \|L_i-Df_{x_{i-1}}\|<\varepsilon,$$
	there exists an endomorphism $\hat{f} \in \mathcal{U}$ and a neighborhood $B$ of $\{x_0,\dots,x_{n-1}\}$ so that
	$\dim \ker(D\hat{f}^m)\geq k+1$ for some positive integer $m$.
\end{lemma}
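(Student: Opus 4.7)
The plan is to realize the perturbed linear maps as the derivative of a genuine $C^1$ perturbation via Franks' Lemma, and then to read off an extra kernel direction by composing the perturbed middle with the unperturbed derivatives coming from the preceding and following visits to $\mathrm{Cr}_\kappa(f)$. The vector $v\in F(x_0)$ gives a vector at the earlier critical point that escapes the natural kernel, while $\Phi(v)\in E(x_n)$ guarantees it is annihilated by the later derivative; together with the existing $\kappa$-dimensional kernel this forces the kernel of some iterate of $D\hat f$ to have dimension at least $\kappa+1$.

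\textbf{Step 1 (orbit setup).} Fix an orbit $(x_i)_i \in \Lambda_f$ containing $\Sigma=\{x_0,\dots,x_{n-1}\}$, and let $\tau_0^{-}$ and $\tau_n^{+}$ be the times defining $F(x_0)=\mathrm{Im}(Df^{|\tau_0^{-}|}_{x_{\tau_0^{-}}})$ and $E(x_n)=\ker(Df^{m_f+\tau_n^{+}}_{x_n})$. Set $m=|\tau_0^{-}|+n+m_f+\tau_n^{+}$, so that the relevant orbit segment of length $m$ runs from $x_{\tau_0^{-}}$ to $x_{n+m_f+\tau_n^{+}}$. After possibly enlarging $\Sigma$ and relabeling, we may assume the orbit points along this segment are pairwise distinct (if the orbit has coincidences inside the segment, we can shift to a nearby orbit in $\Lambda_f$, which is possible by the density argument of Lemma~\ref{lambdaset-non-empty} and the fact that $E$ and $F$ are defined in terms of visits to $\mathrm{Cr}_\kappa(f)$, which is stable under small choices).

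\textbf{Step 2 (Franks' perturbation).} Take $\varepsilon$ smaller than the perturbation size given by Franks' Lemma on the open set $\mathcal U$. Apply Franks' Lemma to $\Sigma$ and the prescribed linear maps $L_i$: this produces $\hat f\in\mathcal U$ and a neighborhood $B$ of $\Sigma$, with balls $B_i$ around each $x_i$ on which $D\hat f_{x_i}=L_i$, such that $\hat f(x_i)=f(x_i)$ and $\hat f\equiv f$ outside $B$. Shrinking $B$ if necessary, we arrange that the flanking orbit points $x_{\tau_0^{-}},\dots,x_{-1}$ and $x_n,\dots,x_{n+m_f+\tau_n^{+}}$ lie outside $B$, so the $\hat f$-orbit of $x_{\tau_0^{-}}$ coincides with the $f$-orbit and $D\hat f$ equals $Df$ at every point of this orbit except at $x_0,\dots,x_{n-1}$.

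\textbf{Step 3 (kernel enlargement).} Writing $\Phi=L_{n-1}\cdots L_0$, the chain rule gives
\[
D\hat f^{\,m}_{x_{\tau_0^{-}}} \;=\; Df^{\,m_f+\tau_n^{+}}_{x_n}\circ \Phi \circ Df^{\,|\tau_0^{-}|}_{x_{\tau_0^{-}}}.
\]
By Lemma~\ref{lemma:1}, $\ker Df^{\,|\tau_0^{-}|}_{x_{\tau_0^{-}}}$ is $\kappa$-dimensional and is automatically contained in $\ker D\hat f^{\,m}_{x_{\tau_0^{-}}}$. Because $v\in F(x_0)=\mathrm{Im}(Df^{\,|\tau_0^{-}|}_{x_{\tau_0^{-}}})$ is non-zero, pick $w\in T_{x_{\tau_0^{-}}}M$ with $Df^{\,|\tau_0^{-}|}_{x_{\tau_0^{-}}}(w)=v$; then $w\notin\ker Df^{\,|\tau_0^{-}|}_{x_{\tau_0^{-}}}$, yet $D\hat f^{\,m}_{x_{\tau_0^{-}}}(w)=Df^{\,m_f+\tau_n^{+}}_{x_n}(\Phi(v))=0$ since $\Phi(v)\in E(x_n)$. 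Hence
\[
\ker D\hat f^{\,m}_{x_{\tau_0^{-}}}\;\supseteq\;\ker Df^{\,|\tau_0^{-}|}_{x_{\tau_0^{-}}}\oplus\mathrm{span}\{w\},
\]
and this space has dimension at least $\kappa+1$, proving the lemma.

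\textbf{Main obstacle.} The delicate point is the distinctness/disjointness issue in Step 1 and the separation of $B$ from the flanking orbit segment in Step 2: if the orbit revisits one of the perturbation balls before reaching $x_n$ or before leaving from $x_{\tau_0^{-}}$, the decomposition of $D\hat f^{\,m}_{x_{\tau_0^{-}}}$ in Step 3 acquires additional perturbed factors and the kernel computation is no longer transparent. The resolution should come from a density argument inside $\Lambda_f$ allowing us to choose the orbit segment free of such recurrences, together with the freedom to shrink the Franks perturbation balls; this is essentially bookkeeping, but it is the step that must be justified with care.
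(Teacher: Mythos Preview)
Your argument is correct and follows the same approach as the paper: realize the $L_i$ via Franks' Lemma while keeping $\hat f=f$ on the flanking orbit segment, then combine a preimage $w$ of $v$ with the $\kappa$-dimensional kernel at $x_{\tau_0^-}$ to obtain $\dim\ker D\hat f^{\,m}\ge\kappa+1$. Your ``main obstacle'' is precisely what the paper treats by shrinking $B$ so that $B\cap\{x_{\tau_0^{-}},\dots,x_{\tau_0^{+}}\}=\Sigma$ (together with a without-loss-of-generality assumption $n\le\tau_0^{+}$, in place of your direct use of $\tau_n^{+}$); the density route you suggest is not needed.
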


\begin{proof}
	Let $\varepsilon >0$ be given by \hyperref[lemma:Franks]{Franks' Lemma}. Then, up to shrinking the neighborhood $\mathcal{U}$
	Let us recall the orbit $(x_i)_i \in \Lambda_f$ hits on $\mathrm{Cr}_{\kappa}(f)$ infinitely many times for the past and for the future. Hence, assume, without loss of generality, that $\tau_0^{-}$ and $\tau_0^{+}$ defined for the orbit $(x_i)_i$ satisfy $\tau_0^{-}\leq 0 < n\leq \tau_0^{+}$. Also by \hyperref[lemma:Franks]{Franks' Lemma}, there exists $\hat{f} \in \mathcal{U}$ such that the neighborhood $B$ of $\Sigma$ such that verifies
	$$B\cap\{x_{\tau_0^{-}},\dots,x_0,\dots,x_{n-1},\dots,x_{\tau_0^{+}}\}=\Sigma.$$
	Then, there is $w \in F(x_{\tau_0^{-}})$ such that $Df^{|\tau_0^{-}|}(w)=v$, and taking $m=m_f+\tau_0^{+}+|\tau_0^{-}|$ we get that
	\begin{align*}
	D\hat{f}^m(w)&=Df^{m_f+\tau_0^{+}+-n}L_n\cdots L_1 Df^{|\tau_0^{-}|}(w)\\
	&=Df^{\tau_0^{+}-n}L_n\cdots L_1(v)\in Df^{m_f+\tau_0^{+}-n}(E(x_n))=Df^{m_f}(E(x_{\tau_0^{+}}))=\{0\}.
	\end{align*}
	Therefore, $E(x_{\tau_0^{-}})\oplus \mathrm{span}\{ w\}$ is contained in $\ker(D\hat{f}^m)$. This completes the proof.
\end{proof}

From now on, to prove Theorem~\ref{thm B}, we suppose that there is a  neighborhood $\mathcal{U}_0$ of $f_0$ such that
\begin{align}\label{assumption}
\dim \ker(Df^m)\leq \kappa, \, \forall f\in \mathcal{U}_0 \ \ \text{and} \ \ m \geq 1 .
\end{align}
In other words, we are assuming that the second item of Theorem~\ref{thm B} cannot happen. 

Furthermore, we can also assume that:
\begin{align}
\sup_{(x,v) \in TM, \|v\|=1}\|Df_x(v)\| \leq K, \forall f \in \mathcal{U}_0.
\end{align}

Throughout the rest of this paper, we fix $\varepsilon_0 >0$ as in Lemma~\ref{Franks-version}, and choose $\alpha>0$ the angle small enough so that for all rotation $R$ on $T_{f(x)}M$ of angle smaller than $\alpha$ satisfies:
\begin{align}\label{rotation}
\|R\circ Df_x -Df_x\|<\varepsilon_0, \forall f \in \mathcal{U}_0.
\end{align}

We would like the reader to keep in mind that the existence of a family of rotations $R_1,\dots,R_n$ of angles smaller than $\alpha$ such that $L_i=R_i\circ Df_{x_{i-1}}$, for $1\leq i \leq n$, satisfying the hypothesis of Lemma~\ref{Franks-version} is incompatible with \eqref{assumption}. In the course of this section, we will evoke this incompatibility. 

We now are devoted to prove the first item in Theorem~\ref{thm B}. %\sout{, which remains to prove that the uniformity of the angle and the domination properties}.
More precisely, we will prove there are $\ell>0$ and $\alpha>0$ such that $E\oplus F$ is an $(\alpha,\ell)$-dominated splitting over $\Lambda_f$ for every $f \in \mathcal{F}$.

\begin{anglelemma}\label{lemma-angles}
	For every $f \in \mathcal{F}$ and each $(x_i)_i \in \Lambda_f$ holds that $$\varangle(E(x_i),F(x_i))\geq \alpha, \, \forall i \in \mathbb{Z}.$$
\end{anglelemma}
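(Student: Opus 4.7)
The plan is to argue by contradiction: an orbit point at which the angle between $E$ and $F$ drops below $\alpha$ would, via a single application of Lemma~\ref{Franks-version}, produce an endomorphism whose derivative has a kernel of dimension strictly greater than $\kappa$, contradicting the standing hypothesis \eqref{assumption}. Intuitively, a small rotation of $Df$ at one point can align a vector of the $F$-direction with the $E$-direction, whose subsequent iterates are then annihilated by a suitable power of the unperturbed $Df$, on account of the defining equality $E(x_i)=\ker(Df^{m_f+\tau_i^+}_{x_i})$.

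Suppose for contradiction that there exist $f\in\mathcal{F}$, an orbit $(x_i)_i\in\Lambda_f$ and an index $i_0$ with $\varangle(E(x_{i_0}),F(x_{i_0}))<\alpha$, where $\alpha$ is fixed as in \eqref{rotation}. Pick unit vectors $e\in E(x_{i_0})$ and $v\in F(x_{i_0})$ realizing the angle. The Invariance property lemma guarantees that $Df|_F$ is an isomorphism along every orbit in $\Lambda_f$, so there is a unique $v'\in F(x_{i_0-1})$ with $Df_{x_{i_0-1}}(v')=v$, and $v'\neq 0$.

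Let $R$ be the rotation of $T_{x_{i_0}}M$ supported in the two-plane $\mathrm{span}\{v,e\}$, of angle $\varangle(v,e)<\alpha$, sending $v$ to $e$, and set $L:=R\circ Df_{x_{i_0-1}}\colon T_{x_{i_0-1}}M\to T_{x_{i_0}}M$. By the choice of $\alpha$ in \eqref{rotation} we have $\|L-Df_{x_{i_0-1}}\|<\varepsilon_0$, while
\[
L(v')\;=\;R\bigl(Df_{x_{i_0-1}}(v')\bigr)\;=\;R(v)\;=\;e\;\in\;E(x_{i_0}).
\]
This is exactly the hypothesis of Lemma~\ref{Franks-version} with $\Sigma=\{x_{i_0-1}\}$, the single perturbed map $L$, and the vector $v'\in F(x_{i_0-1})$. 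The lemma thus produces $\hat f\in\mathcal{U}_0$ and $m\geq 1$ with $\dim\ker(D\hat f^m)\geq\kappa+1$, contradicting \eqref{assumption}. The only delicate point is the choice to pre-compose $Df_{x_{i_0-1}}$ with the rotation (rather than post-compose), which lets us exploit the small angle at $x_{i_0}$ directly together with the forward invariance $Df(E)\subseteq E$; after that observation, a one-step Franks-type argument suffices and no multi-point perturbation scheme is needed.
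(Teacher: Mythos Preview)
Your argument is correct and is essentially the same as the paper's own proof: both assume a point on an orbit in $\Lambda_f$ where $\varangle(E,F)<\alpha$, take a rotation $R$ of angle $<\alpha$ at that point sending a unit vector of $F$ into $E$, set $L=R\circ Df$ at the preceding orbit point, and invoke Lemma~\ref{Franks-version} with a single-point $\Sigma$ to contradict \eqref{assumption}. Your write-up is slightly more explicit (you name the preimage $v'\in F(x_{i_0-1})$ and justify its existence via $Df\mid_F$ being an isomorphism), but the idea and the perturbation are identical.
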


\begin{proof}
	Otherwise, there is a unit vector $w \in F(f(x))$, for some $x$ in $(x_i)_i \in \Lambda_f$, so that $\varangle(\mathbb{R} w, E(f(x)))< \alpha$. Thus, we take a rotation $R: T_{f(x)}M \to T_{f(x)}M$ of angle $<\alpha$ such that $R(w) \in E(f(x))$ which contradicts the assumption \eqref{assumption} since $\Sigma=\{f(x)\}$ and $L_1=R\circ Df_x$ satisfy the hypothesis of Lemma~\ref{Franks-version}.
\end{proof}

To conclude the proof of Theorem~\ref{thm B} remains to show the following.

\begin{dominatelemma}
	There exists an integer $\ell>0$ such that for every $f \in \mathcal{F}$, the splitting $E\oplus F$ over $\Lambda_f$ satisfies $E\prec_{\ell} F$.
\end{dominatelemma}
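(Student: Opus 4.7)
The plan is to argue by contradiction using Lemma~\ref{Franks-version}. Suppose no uniform $\ell$ works. Then for every integer $n\geq 1$ we can find some $f_n\in\mathcal{F}$, an orbit $(x_i)_i\in\Lambda_{f_n}$, and (after re-indexing) a point $x_0$ along it such that
$$\|Df_n^{n}|_{E(x_0)}\|>\tfrac12\,\m(Df_n^{n}|_{F(x_0)}).$$
The strategy is to exploit this lack of domination to construct, for some $n$ sufficiently large, rotations $R_i$ on $T_{x_i}M$ of angles strictly less than $\alpha$ so that, setting $L_i:=R_i\circ Df_{x_{i-1}}$, the composition $L_n\circ\cdots\circ L_1$ maps some unit vector $v\in F(x_0)$ into $E(x_n)$. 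By \eqref{rotation} we have $\|L_i-Df_{x_{i-1}}\|<\varepsilon_0$, so Lemma~\ref{Franks-version} yields an endomorphism $\hat f\in\mathcal{U}_0$ with $\dim\ker(D\hat f^{m})\geq\kappa+1$ for some $m\geq 1$, contradicting \eqref{assumption}.

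To produce these rotations, I would first apply a Pliss-type lemma to pass from the single bad point $x_0$ to an orbit segment of length $n$ along which every intermediate ratio $\|Df^{k}|_{E(x_i)}\|/\m(Df^{k}|_{F(x_i)})$ stays uniformly bounded below. Next, choose $u_0\in E(x_0)$ realizing $\|Df^{n}|_{E}\|$ and $v_0\in F(x_0)$ realizing $\m(Df^{n}|_{F})$, and set $u_i:=Df^i(u_0)$, $v_i:=Df^i(v_0)$. Since $E$ and $F$ are $Df$-invariant, each $2$-plane $P_i:=\mathrm{span}(u_i,v_i)\subseteq T_{x_i}M$ is also $Df$-invariant; by the Uniform Angle Lemma, $\varangle(u_i,v_i)\geq\alpha$. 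At each step, insert a rotation $R_i$ supported in $P_i$ (and equal to the identity on $P_i^\perp$) by angle $\theta_i\in(-\alpha,\alpha)$. Because domination fails along the whole segment, the restriction $Df|_{P_{i-1}\to P_i}$ has both singular values comparable, so this $2$-plane cocycle is near-conformal. Consequently a composition of $n$ such ``small rotation + near-conformal'' maps rotates the projective direction of $v_0$ by approximately $\sum\theta_i$. Choosing the $\theta_i$ with common sign and $|\theta_i|\leq \pi/(2n)$ so that $\sum \theta_i=\varangle(v_n,u_n)\leq \pi/2$, the iterated image $L_n\cdots L_1(v_0)$ lands on the line $\mathbb{R}u_n\subseteq E(x_n)$, as required.

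The main technical step is making the compounding-of-rotations argument quantitative: one needs an explicit estimate showing that on a $2$-dimensional cocycle whose singular-value ratio is bounded away from $0$ and $\infty$ (the quantitative form of ``no domination''), the projective action of $R_{\theta}\circ Df$ is an $O(1/n)$-perturbation of a rigid rotation by $\theta$ composed with a scaling, so that $n$ such compositions accumulate predictably. This is a short $SL(2)$-calculation, but it must be paired with the Pliss selection from the previous step to guarantee the bounds hold uniformly along the entire segment. The higher-dimensional nature of $E$ and $F$ causes no additional issue because the rotation $R_i$ acts by the identity on $P_i^\perp$, reducing the problem to the $2$-plane analysis, and the resulting $\hat f$ still falls in $\mathcal{U}_0$ since the rotations are within the $\varepsilon_0$-ball guaranteed by \eqref{rotation}.
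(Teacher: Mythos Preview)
Your overall strategy---reduce to a $2$-plane cocycle and compose small rotations to push an $F$-vector into $E$---is essentially the paper's, but there is a genuine gap: you never control the norm of the inverse of the one-step maps $A_i:=Df|_{P_{i-1}\to P_i}$. When the orbit segment passes near $\mathrm{Cr}_\kappa(f)$, the restriction $Df|_{E(x_i)}$ can be arbitrarily small (it vanishes on $\mathrm{Cr}_\kappa(f)$), so $\m(A_i)$ is not bounded below and your ``near-conformal'' claim fails exactly there. A Pliss selection controls only cumulative products, not one-step ratios, so it does not rescue this. The paper's key extra ingredient is precisely Lemma~\ref{lemma-domination}: it shows (in the delicate case $\dim\ker(Df_0)=\kappa$) that any orbit segment along which domination fails must stay \emph{outside} a fixed neighborhood $U$ of $\mathrm{Cr}_\kappa(f_0)$. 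Only after this do you get a uniform bound $1/N\leq\|Df_x|_\Gamma\|\leq N$ for all $\kappa$-planes $\Gamma$ along the segment, which is exactly the hypothesis $\|A_i\|,\|A_i^{-1}\|\leq N$ needed to run the $2$-dimensional rotation argument (the paper's Lemma~\ref{lemma-matrices}).

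A second, smaller issue: even with two-sided norm bounds, ``lack of domination along the segment'' does not mean each $A_i$ is near-conformal; the singular-value ratio of individual steps can be large provided the products balance. So your hands-on ``rotations accumulate like $\sum\theta_i$'' heuristic is not justified as stated. The correct statement is the black-box Lemma~\ref{lemma-matrices}: given bounded $\|A_i\|,\|A_i^{-1}\|$ and the failure of \eqref{eq-matrices}, one can find rotations of angle $<\alpha$ achieving the alignment. With Lemma~\ref{lemma-domination} supplying the norm bound and Lemma~\ref{lemma-matrices} replacing the near-conformal heuristic, the rest of your outline (invoke Lemma~\ref{Franks-version}, contradict \eqref{assumption}, then concatenate to pass from ``some $j\leq\ell_0$ works'' to a uniform $\ell$) matches the paper's Steps~1 and~2.
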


The proof of this lemma, in some sense, carries the same ideas used in the context of diffeomorphisms, where the lack of domination property allows us to create a point exhibiting some obstruction for the robust transitivity. However, for endomorphisms displaying critical points, the proof is more subtle.

Let us make some comments in order to be clear how we are proceeding with the proof of the domination property lemma. We will assume by contradiction that the splitting $E\oplus F$ does not %\sout{have}
satisfy the domination property and, hence, there are some unit vectors $v \in E(x)$ and $w \in F(x)$ so that for some point $x$ and large positive integer $\ell$ the following holds
$$\|Df^j(v)\|\geq \frac{1}{2} \|Df^j(w)\|,$$
for each $1\leq j \leq \ell$. Note that, as $Df^j(w)\neq 0$, we have that $Df^j$ restricts to %\sout{the span of $v$ and $w$ is an isomorphism onto its image. Then, we can assume that $Df$ restricts to the bidimensional subspace generated by $Df^{j}(v)$ and $Df^{j}(w)$ at $T_{f^j(x)}M$ is an isomorphism matrix. Finally, if we have some control over the norms of $Df$ and $Df^{-1}$ restrict to the span of $v$ and $w$ along of the orbit of $x$, we can apply Lemma~\ref{lemma-matrices} to mix the subbundles to increase dimension of critical set, which is incompatible with \eqref{assumption}.}
the bidimensional subspace $V_0=\mathrm{span}\{v,w\}$ is an isomorphism onto its image, $V_j\subseteq T_{f^j(x)}M$. Let us call $Df\mid_{V_{j-1}}:V_{j-1} \to V_j$ by $A_j$. Then, we provide some control over the norms of $A_j$ and $A_j^{-1}$ for each $1\leq j\leq \ell$ in order to apply Lemma~\ref{lemma-matrices} to mix the subbundles $E$ and $F$ to increase the dimension of the critical set, which is incompatible with \eqref{assumption}. However, the control over the norms of $A_j$ and $A_j^{-1}$ for each $1\leq j \leq \ell$ is the subtle part of proof since $Df^{-1}$ is not necessarily defined due to the existence of critical points.

To obtain such norm control of $Df^{-1}$, we divide our approach in two cases. The first one, we assume that $\dim \ker(Df_0)=r<\kappa$. In this case, up to shrink the neighborhood of $f_0$, we have that $\dim \ker(Df)\leq r$ for every $f \in \mathcal{U}_0$. Then, we can find a large $N>0$ such that
\begin{align}\label{ineqb}
\frac{1}{N}\leq \|Df_x|_{\Gamma}\|\leq N,
\end{align}
where for all $x \in M$ and all $\kappa$-dimensional subspace $\Gamma$ of $T_xM$. Then, we can conclude that $1/N \leq  \|A_j\|,\|A_j^{-1}\|\leq N$ for each $1\leq j \leq \ell$. %\sout{Then, we have that the norm of $Df^{-1}$ is uniformly bounded by $N$ (in some sense, since $Df^{-1}$ not necessary exists).}

The second case, we assume that $\dim\ker(Df_0)=\kappa$. %\sout{We recall here that if $\dim \ker(Df_0)=\kappa$ then}
Then,
\begin{center}
	$\mathrm{Cr}_{\kappa}(f_0)=\{x \in M: \dim\ker({Df_0}^{m_{f_0}}_x)=\kappa\}$ is non-empty and $m_{f_0}=1$.
\end{center}

This allows us to conclude that every $f \in \mathcal{F}$ satisfies that $m_f=1$ since $m_f\leq m_{f_0}$. Then, to obtain the control of $\|A_j\|$ and $\|A_j^{-1}\|$, we will show that there is a neighborhood $U$ of $\mathrm{Cr}_{\kappa}(f_0)$ in $M$ such that if $f \in \mathcal{F}$ then $\mathrm{Cr}_{\kappa}(f) \subseteq U$ and the piece of the orbit $\{f^j(x):1\leq j \leq \ell\}$ along which there is lack of domination property is away from $U$. This will imply a similar inequality as in \eqref{ineqb} since $\dim \ker(Df)<\kappa$ out of $U$.
%\sout{we take that $U_0$ is a neighborhood of $\mathrm{Cr}_{\kappa}(f_0)$ in $M$ such that the set $\mathrm{Cr}_{\kappa}(f)$ is contained in $U_0$ for each $f \in \mathcal{F}$ and so get that the norm of $Df^{-1}$ is uniformly bounded out of $U_0$.} \sout{Therefore, if the lack of domination happens away from a neighborhood of the component $\mathrm{Cr}_{\kappa}(f)$, we can use the same argument before. However, since the norm of $Df^{-1}$ is getting large if we remain close to the component $\mathrm{Cr}_{\kappa}(f)$, we have that the subtle part is if the lack of domination may happen around of $\mathrm{Cr}_{\kappa}(f)$.}

The next lemma deals with this subtle part of the proof.

\begin{lemma}\label{lemma-domination}
	Assume that $\dim \ker(Df_0)=\kappa$. Then, there exists a neighborhood $\mathcal{U} \subseteq \mathcal{U}_0$ of $f_0$ and another one $U$ of $\mathrm{Cr}_{\kappa}(f_0)$ such that for any $f \in \mathcal{F}\cap \mathcal{U}$ has its critical point $\mathrm{Cr}_{\kappa}(f)$ is contained in $U$ and if $(x_j)_j \in \Lambda_f$ satisfies that
	\begin{align}\label{eq-lemma}
	\|Df^j\mid_{E(x_0)}\|\geq \frac{1}{2} \m(Df^j\mid_{F(x_0)}),
	\end{align}
	for each $1\leq j \leq l$, then  $x_0,x_1,...,x_{l-1} \notin U$. 
\end{lemma}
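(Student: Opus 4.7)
The plan is to build $\mathcal{U}$ and $U$ from upper semi-continuity of the kernel dimension and then, for the second conclusion, to split into two cases depending on whether $x_k$ lies in $\mathrm{Cr}_\kappa(f)$ or only in $U\setminus\mathrm{Cr}_\kappa(f)$.

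First, I would construct the neighborhoods. Since the rank of a linear map is lower semi-continuous, the map $(f,x)\mapsto \dim\ker(Df_x)$ is upper semi-continuous on $\mathcal{U}_0\times M$. For any open neighborhood $U\supseteq \mathrm{Cr}_\kappa(f_0)$, this upper semi-continuity together with the compactness of $M\setminus U$ and the standing assumption \eqref{assumption} produces a $C^1$-neighborhood $\mathcal{U}\subseteq\mathcal{U}_0$ of $f_0$ with $\dim\ker(Df_x)\leq \kappa-1$ for every $(f,x)\in \mathcal{U}\times (M\setminus U)$. Since $m_{f_0}=1$ and $m_f\leq m_{f_0}$, every $f\in\mathcal{U}$ satisfies $m_f=1$, so $\mathrm{Cr}_\kappa(f)=\{x:\dim\ker(Df_x)=\kappa\}\subseteq U$, which is the first assertion. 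Shrinking $U$ using that $\sigma_\kappa(Df_{0,x})=0$ on $\mathrm{Cr}_\kappa(f_0)$ and continuity of singular values, and then shrinking $\mathcal{U}$, one may further assume $\sigma_\kappa(Df_x)<\varepsilon_0/2$ for every $(f,x)\in \mathcal{U}\times U$, where $\varepsilon_0$ is the Franks constant fixed earlier.

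For the second assertion, suppose by contradiction that $x_k\in U$ for some $0\leq k\leq l-1$. If $x_k\in \mathrm{Cr}_\kappa(f)$, then $\tau_k^{+}=0$ and $E(x_k)=\ker(Df_{x_k}^{m_f})=\ker(Df_{x_k})$, so $Df(E(x_k))=\{0\}$ and thus $\|Df^{k+1}\mid_{E(x_0)}\|=0$. Since $F$ is $Df$-invariant with $Df\mid_F$ an isomorphism (Remark~\ref{rmks}), $\m(Df^{k+1}\mid_{F(x_0)})>0$, contradicting \eqref{eq-lemma} at $j=k+1$. Otherwise $x_k\in U\setminus \mathrm{Cr}_\kappa(f)$; here the bound $\sigma_\kappa(Df_{x_k})<\varepsilon_0/2$ allows one to build, from the singular value decomposition of $Df_{x_k}$, a linear map $\tilde L:T_{x_k}M\to T_{x_{k+1}}M$ whose kernel is the $\kappa$-dimensional slow singular subspace of $Df_{x_k}$ and satisfies $\|\tilde L-Df_{x_k}\|\leq \sigma_\kappa(Df_{x_k})<\varepsilon_0/2$.

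The main obstacle is then to combine this rank-lowering perturbation at $x_k$ with small rotations $R_i$ of angle $<\alpha$ at the remaining intermediate points, chosen so that the orbit of a suitable $v\in F(x_0)$ under the resulting perturbed linear maps $L_i$ (each with $\|L_i-Df_{x_{i-1}}\|<\varepsilon_0$ thanks to \eqref{rotation}) satisfies $L_n\cdots L_1(v)\in E(x_n)$ for some $n\leq l$. Once that is achieved, Lemma~\ref{Franks-version} produces $\hat f\in\mathcal{U}_0$ with $\dim\ker(D\hat f^m)\geq \kappa+1$ for some $m\geq 1$, contradicting \eqref{assumption}. The role of the lack-of-domination hypothesis \eqref{eq-lemma} is precisely that the $E$-component of $v$ inserted at each rotated step does not decay relative to its $F$-component, so the small tilts can accumulate across $l$ steps while each individual rotation stays within the Franks threshold $\alpha$; the role of $x_k\in U$ is to supply, via $\tilde L$, the extra kernel dimension at the ``near-critical'' step which turns the standard diffeomorphism-style rotation accumulation into an honest jump in the kernel dimension of some iterate of the perturbed endomorphism.
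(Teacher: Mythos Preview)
Your construction of the neighborhoods and your Case~1 (when $x_k\in\mathrm{Cr}_\kappa(f)$) are correct. The gap is in Case~2, and it is a real one.

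First, the ``accumulation of small rotations'' idea you invoke is precisely the content of Lemma~\ref{lemma-matrices}, and that lemma needs a \emph{lower} bound $l\geq \ell_0(\alpha,N)$ on the number of steps. In the present statement $l$ is arbitrary; in particular $l=1$ must be handled (if $\|Df\mid_{E(x_0)}\|\geq \tfrac12\,\m(Df\mid_{F(x_0)})$ then $x_0\notin U$). No accumulation is available, so this mechanism cannot be what drives the proof. Second, the rank-lowering perturbation $\tilde L$ has kernel equal to the slow singular subspace $V_f(x_k)$, and the whole point (which you do not establish) is that on $U$ this subspace is close to $E(x_k)$, not to $F(x_k)$. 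Hence $\tilde L$ does not kill vectors coming from $F$; it kills vectors already near $E$, which is exactly what $Df_{x_k}$ already nearly does. So $\tilde L$ adds nothing to the Franks scheme: Lemma~\ref{Franks-version} already produces the extra kernel dimension \emph{once} you have pushed a vector of $F(x_0)$ into $E(x_n)$, and that is the step you have not explained.

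The paper's argument is different in kind and avoids both difficulties. It first proves (Lemmas~\ref{VW-lemma}, \ref{lemma-CCS}, \ref{constant-C_0}) that on $U$ one has $E\subseteq\mathscr{C}_{V_f}(\cdot,\eta)$, $F\subseteq\mathscr{C}^{\ast}_{V_f}(\cdot,\eta)$, and $\|Df\mid_{E}\|<\rho<\tfrac12\cdot 2\rho\leq \m(Df\mid_{F})$; the last inequality immediately gives $x_0\notin U$. For the remaining points one takes a vector $u=(v,\beta^{-1}w)$ on the boundary of a cone $\mathscr{C}_{F,E}(x_0,\beta)$ around $F$; the lack of domination forces $u_l=Df^l(u)$ to leave $\mathscr{C}_{F,E}(x_l,\beta/2)$. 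If some $x_{l-1}\in U$, then the strong contraction/expansion there (Lemma~\ref{VW-lemma}) would send anything in $\mathscr{C}^{\ast}_{V_f}(x_{l-1},\eta)$ back into $\mathscr{C}_{F,E}(x_l,\beta/2)$, so necessarily $u_{l-1}\in\mathscr{C}_{V_f}(x_{l-1},\eta)$, i.e.\ $u_{l-1}$ is $\alpha$-close to $E(x_{l-1})$. Since $u$ was $\alpha$-close to $F(x_0)$ by the choice of $\beta$, \emph{two} rotations (one at $x_0$, one at $x_{l-1}$) suffice to feed Lemma~\ref{Franks-version} and contradict \eqref{assumption}. No rank-lowering and no accumulation over many steps are used.
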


\subsection{Domination property near the critical points}
Our goal here is to prove Lemma~\ref{lemma-domination}. For that, we will first make some preliminaries that will be needed in the proof.

Denote by $0\leq \sigma_{1}(x)\leq \sigma_{2}(x) \leq \cdots \leq \sigma_d(x)$ the singular values of $Df$ at $x \in M$. Using that $\mathcal{F}$ accumulates at $f_0$ (in $C^1$-topology) together with the fact that $\ker(Df_0)$ at $x$ is $\kappa$-dimensional for each $x \in \mathrm{Cr}_{\kappa}(f_0)$, we can find a neighborhood $\mathcal{U}_0$ of $f_0$ and a neighborhood $U_0$ of $\mathrm{Cr}_{\kappa}(f_0)$ so that $\mathrm{Cr}_{\kappa}(f)$ is contained in $U_0$ for every $f$ in $\mathcal{F}\cap \mathcal{U}_0$. In particular, the singular values of $Df$ at $x \in \mathrm{Cr}_{\kappa}(f)$ are the following:
$$0=\sigma_{1}(x)=\cdots=\sigma_{\kappa}(x)< \sigma_{\kappa+1}(x)\leq \cdots \leq \sigma_d(x).$$ 
By continuity of $\sigma_i$'s on $M$ since $f$ is a $C^1$-endomorphism, we can assume that for all $x \in U_0$ one has that
\begin{align}\label{inequality}
\sigma_{1}(x)\leq \dots \leq\sigma_{\kappa}(x)< \sigma_{\kappa+1}(x)\leq \dots \leq \sigma_d(x).
\end{align}

Denote by $V_f(x)$ and $W_f(x)$ the subspaces of $T_xM$ generated by the eigenvectors associated to $\sigma_1(x),\dots , \sigma_{\kappa}(x)$ and $\sigma_{\kappa+1}(x), \dots, \sigma_d(x)$, respectively. These subspaces satisfy the following properties:
\begin{enumerate}[label=(\arabic*)]
	\item $V_f(x)\oplus W_f(x)=T_xM$, for every $x \in U_0$; \label{eq-1}
	\item  $V_{f}(x)\perp W_{f}(x)$ and $Df(V_{f}(x))\perp Df(W_{f}(x))$, for every $x \in U_0$; \label{eq-2}
	\item If $f \in \mathcal{F}$ and $(x_i)_i \in \Lambda_f$ with $x_i \in \mathrm{Cr}_{\kappa}(f)$ then $V_f(x_i)=\ker(Df_{x_i})=E(x_i)$ and $Df_{x_i}(W_f(x_i))=\mathrm{Im}(Df_{x_i})=F(x_i)$. \label{eq-3}
\end{enumerate}
Furthermore, it follows from the continuous dependence on the singular values of $Df$ at $x$ that the following maps are continuous:
\begin{align}\label{cont-nbhd}
\mathcal{U}_0\times U_0 \ni (f,x) \mapsto V_f(x) \ \ \text{and} \ \ \mathcal{U}_0\times U_0 \ni (f,x) \mapsto W_f(x).
\end{align}

\begin{remark}
	In \eqref{cont-nbhd}, the continuity is seen in $\mathrm{Grass}_r(TM)$ the Grassmannian of the vector bundle $TM$, which its points are all the pairs $(x,\Gamma)$ such that $\Gamma$ is a $r$-dimensional subspace of $T_xM$, for $r=\kappa$ and $r=d-\kappa$, respectively.
\end{remark}

For every $x \in U_0$ and $\eta \in (0,\pi/2)$, define the following cone-field:
\begin{align*}
\mathscr{C}_{V_f}(x,\eta) &=\{u \in T_xM: \varangle(\mathbb{R}u, W_f(x))\geq \pi/2- \eta\},
\end{align*}
and so $\mathscr{C}^{\ast}_{V_f}(x,\eta)=\{u \in T_xM: \varangle(\mathbb{R}u, V_f(x))\geq \eta\}$. Furthermore, it follows from \ref{eq-2} that $\mathscr{C}_{W_f}(x,\pi/2-\eta)=\mathscr{C}^{\ast}_{V_f}(x,\eta)$.

\begin{lemma}\label{VW-lemma}
	For all $\eta, \theta \in (0,\pi/2)$ small enough, up to shrinking the neighborhoods $\mathcal{U}_0$ and $U_0$, we have that for every $f \in \mathcal{U}_0$ and $x \in U_0$ the following holds: 
	
	For every $u \in \mathscr{C}^{\ast}_{V_f}(x,\eta)$, writing $u=(v,w) \in V_f(x)\oplus W_f(x)$, one has that
	\begin{align}\label{ineq-lemma}
	\varangle(\mathbb{R}Df(u),\mathbb{R}Df(w))< \theta.
	\end{align}
\end{lemma}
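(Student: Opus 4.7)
The intuition is that on a small enough neighborhood of $\mathrm{Cr}_\kappa(f_0)$, the derivative $Df$ crushes the kernel-like direction $V_f$ while preserving $W_f$, so that any vector not too tangent to $V_f$ has its image almost aligned with the image of its $W_f$-component. The plan is to make this precise using the singular-value gap \eqref{inequality} together with the orthogonality properties \ref{eq-1}--\ref{eq-2}.

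First, I would fix the decomposition. Given $u \in \mathscr{C}^{\ast}_{V_f}(x,\eta)$, write $u = v + w$ with $v \in V_f(x)$ and $w \in W_f(x)$. By \ref{eq-1}--\ref{eq-2} this sum is orthogonal, so $\varangle(\mathbb{R}u,V_f(x)) = \arctan(\|w\|/\|v\|)$. The cone condition $\varangle(\mathbb{R}u,V_f(x)) \geq \eta$ therefore reads
\[
\|v\| \;\leq\; \|w\|\,\cot\eta.
\]
(If $v=0$ the conclusion is immediate, so assume $v\neq 0$.) Similarly, since $Df(v)\in Df(V_f(x))$ and $Df(w)\in Df(W_f(x))$ are orthogonal by \ref{eq-2},
\[
\tan\varangle(\mathbb{R}Df(u),\mathbb{R}Df(w)) \;=\; \frac{\|Df(v)\|}{\|Df(w)\|},
\]
assuming $Df(w)\neq 0$, which will follow from the bounds below.

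Next, I would control this ratio by the singular-value gap. Since $V_f(x)$ and $W_f(x)$ are spanned by the singular vectors associated to $\sigma_1(x),\dots,\sigma_\kappa(x)$ and $\sigma_{\kappa+1}(x),\dots,\sigma_d(x)$ respectively,
\[
\|Df(v)\| \leq \sigma_\kappa(x)\,\|v\|, \qquad \|Df(w)\| \geq \sigma_{\kappa+1}(x)\,\|w\|.
\]
Combining with the cone estimate gives
\[
\tan\varangle(\mathbb{R}Df(u),\mathbb{R}Df(w)) \;\leq\; \frac{\sigma_\kappa(x)}{\sigma_{\kappa+1}(x)}\cdot\cot\eta.
\]

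Finally, I would argue that the right-hand side can be made arbitrarily small by shrinking $\mathcal{U}_0$ and $U_0$. Under the standing hypothesis of this case, $\dim\ker(Df_0)=\kappa$ and, by \eqref{assumption}, $\dim\ker(Df_0)\leq\kappa$ on all of $M$; hence $\sigma_{\kappa+1}(x)>0$ everywhere for $f_0$, and by compactness of $M$ together with the joint continuity $(f,x)\mapsto \sigma_i(x)$ in $C^1\times M$, there exists $\delta_0>0$ and a neighborhood $\mathcal{U}\subseteq \mathcal{U}_0$ of $f_0$ with $\sigma_{\kappa+1}(x)\geq \delta_0$ for every $f\in\mathcal{U}$ and every $x\in M$. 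On the other hand, $\sigma_\kappa(\cdot)\equiv 0$ on $\mathrm{Cr}_\kappa(f_0)$ for $f_0$, so by the same joint continuity, for any prescribed $\varepsilon>0$ we can find a neighborhood $U$ of $\mathrm{Cr}_\kappa(f_0)$ and shrink $\mathcal{U}$ so that $\sigma_\kappa(x)<\varepsilon$ for every $f\in\mathcal{U}$ and $x\in U$. Choosing $\varepsilon$ with $\varepsilon\,\delta_0^{-1}\cot\eta < \tan\theta$ yields $\varangle(\mathbb{R}Df(u),\mathbb{R}Df(w))<\theta$, as required.

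The one subtle point, which I would flag but expect to be straightforward, is the joint continuity of singular values in $(f,x)\in\mathrm{End}^1(M)\times M$; this follows from the Lipschitz behavior of singular values under matrix perturbations (Weyl's inequalities applied to $\sqrt{Df_x^{\ast}Df_x}$) together with the $C^1$-continuity of $f\mapsto Df_x$. No other step poses a real obstacle: the main mechanism is simply the singular-value gap combined with the orthogonality of $Df(V_f)$ and $Df(W_f)$.
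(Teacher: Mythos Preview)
Your proof is correct and follows essentially the same strategy as the paper: both exploit that on $\mathrm{Cr}_\kappa(f_0)$ the $V$-component is annihilated by $Df_0$, and then propagate this to nearby $(f,x)$ by continuity. The difference is purely stylistic. The paper argues in one line that $Df_0(u)=Df_0(w)$ for $x\in\mathrm{Cr}_\kappa(f_0)$ (so the angle is exactly zero there) and then invokes the continuity \eqref{cont-nbhd} to get \eqref{ineq-lemma} on a neighborhood; you instead compute an explicit bound $\tan\varangle(\mathbb{R}Df(u),\mathbb{R}Df(w))\leq (\sigma_\kappa/\sigma_{\kappa+1})\cot\eta$ via the orthogonality \ref{eq-2} and then make the singular-value ratio small. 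Your version has the mild advantage of making the uniformity in $u$ over the cone completely transparent, which the paper's continuity sentence leaves implicit.
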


\begin{proof}
	We first notice that $V_{f_0}(x)=\ker(Df_0(x))$ and $Df_0(T_xM)=Df_0(W_{f_0}(x))$ for every $x \in \mathrm{Cr}_{\kappa}(f_0)$. Then, for every $u\notin V_{f_0}$ one decomposes $u=(v,w)\in V_{f_0}(x)\oplus W_{f_0}(x)$ with $w\neq 0$, implying that $Df_0(u)=Df_0(w)$. In particular,  $\mathbb{R}Df_0(u)$ is contained in $Df_0(W_{f_0}(x))$ for all $u \in \mathscr{C}^{\ast}_{V_{f_0}}(x,\eta)$ and $x \in \mathrm{Cr}_{\kappa}(f_0)$. 
	
	By the continuity in \eqref{cont-nbhd}, we can shrink the neighborhoods $\mathcal{U}_0$ and $U_0$ to get \eqref{ineq-lemma}.
\end{proof}

The next lemma provides a relation between the splittings $E\oplus F$ and $V\oplus W$ which guarantees a domination property around of a component of the critical set whose the kernel is $k$-dimensional. For simplicity, the set $\mathcal{U}$ will denote a neighborhood of $f_0$ contained in $\mathcal{U}_0$ and $U$ denote a neighborhood of $\mathrm{Cr}_{\kappa}(f_0)$ contained $U_0$.

\begin{lemma}\label{lemma-CCS}
	Given  $\eta, \theta \in (0,\pi/2)$  small enough, there exist neighborhoods $\mathcal{U}$ and $U$ so that for each $f \in \mathcal{F}\cap \mathcal{U}$ and each $(x_i)_i \in \Lambda_f$ one has that if $x_i \in U$ then for all $u_1 \in E(x_i)$ and $u_2 \in F(x_i)$, writing $u_j=(v_j,w_j) \in V_f(x_i)\oplus W_f(x_i)$ for $j=1,2$, the following hold:
	\begin{align}\label{eq-lemma-VW}
	\varangle(\mathbb{R}u_1,\mathbb{R}v_1)\leq\eta \ \ \text{and} \ \ \varangle(\mathbb{R}Df_{x_i}(u_2),\mathbb{R}Df_{x_i}(w_2))
	\leq \theta.
	\end{align}
\end{lemma}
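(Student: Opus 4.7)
The plan is to prove the two inequalities of Lemma~\ref{lemma-CCS} in sequence, reducing the second to the first together with previously established ingredients. For the second inequality, let $u_2 \in F(x_i)$ be unit with decomposition $u_2 = v_2 + w_2 \in V_f(x_i) \oplus W_f(x_i)$. The Uniform Angle Lemma gives $\varangle(\mathbb{R}u_2, E(x_i)) \geq \alpha$. Once the first inequality is proved with constant $\eta < \alpha/2$, the subspaces $E(x_i)$ and $V_f(x_i)$ are within Grassmannian distance comparable to $\eta$, so by standard angle estimates $\varangle(\mathbb{R}u_2, V_f(x_i)) \geq \alpha - \eta \geq \alpha/2 > 0$. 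Hence $u_2 \in \mathscr{C}^*_{V_f}(x_i, \alpha/2)$, and Lemma~\ref{VW-lemma} directly yields $\varangle(\mathbb{R}Df(u_2), \mathbb{R}Df(w_2)) < \theta$ after possibly shrinking $\mathcal{U}, U$ so that its parameters are coordinated with $\theta$.

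For the first inequality I argue by contradiction and compactness. Assuming it fails for every choice of $(\mathcal{U}, U)$, I extract sequences $f_n \in \mathcal{F}$ with $f_n \to f_0$, orbits $(x^n_i)_i \in \Lambda_{f_n}$ with $x^n_0 \to x^* \in \mathrm{Cr}_\kappa(f_0)$ (using $\mathrm{Cr}_\kappa(f_n) \subseteq U_n$ and shrinking neighborhoods), and unit vectors $u_n \in E_{f_n}(x^n_0)$ with $\varangle(\mathbb{R}u_n, V_{f_n}(x^n_0)) > \eta$. By compactness of $TM$ and continuity of $V_f$ on $\mathcal{U}_0 \times U_0$, pass to $u_n \to u^*$ unit in $T_{x^*}M$ with $\varangle(\mathbb{R}u^*, V_{f_0}(x^*)) \geq \eta > 0$. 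The key structural input is that since $x^* \in \mathrm{Cr}_\kappa(f_0)$ and $m_{f_0} = 1$, $V_{f_0}(x^*) = \ker(Df_0|_{x^*})$ is $\kappa$-dimensional; combined with the standing hypothesis $\dim \ker(Df_0^j) \leq \kappa$, this forces $\ker(Df_0^j|_{x^*}) = V_{f_0}(x^*)$ for every $j \geq 1$, so $Df_0^j(u^*) \neq 0$ for every $j$. Writing $\tau_n := \tau^+_{i=0}$ for the orbit at $x^n_0$, we have $u_n \in \ker(Df_n^{1+\tau_n}|_{x^n_0})$. When $\tau_n$ remains bounded, pass to a subsequence with $\tau_n = \tau$ constant; continuity of $Df^{1+\tau}$ gives $Df_0^{1+\tau}(u^*) = \lim Df_n^{1+\tau}(u_n) = 0$, a contradiction.

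The main obstacle is the case $\tau_n \to \infty$, where the finite-time continuity of $Df^{1+\tau_n}$ cannot be invoked directly. My proposed resolution is to show that the $\kappa$-dimensional subspace $K_n := \ker(Df_n^{1+\tau_n}|_{x^n_0})$ converges in the Grassmannian to $V_{f_0}(x^*)$; since $u_n \in K_n$, this forces $u^* \in V_{f_0}(x^*)$ and yields the contradiction. The heuristic is the singular value gap on $U$: for $x \in U$ we have $\sigma_\kappa(x) \ll \sigma_{\kappa+1}(x)$, and iterating this gap for large $\tau_n$ concentrates the slow subspace of $Df^{1+\tau_n}|_{x^n_0}$ onto $V_f(x^n_0)$, paralleling the cone-limit construction in the proof of Proposition~\ref{prop:conedom}. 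Making this rigorous requires propagating cone control along the orbit: inside $U_0$ one applies Lemma~\ref{VW-lemma} to force $Df(u)$ close to $Df(w)$ whenever $u$ is outside the $V$-cone, while outside $U_0$ one exploits the strict inequality $\dim \ker(Df_n|_y) < \kappa$ (which by continuity of singular values yields a uniform conorm bound on a complementary $\kappa$-dimensional subspace). A diagonal extraction over the iteration index, combined with these regional estimates, should yield the Grassmannian convergence $K_n \to V_{f_0}(x^*)$ and close the argument.
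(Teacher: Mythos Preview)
Your reduction of the second inequality to the first via the Uniform Angle Lemma and Lemma~\ref{VW-lemma} is fine and matches the paper's logic. The divergence is in how you attack the first inequality.

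The paper does \emph{not} use a compactness/limit argument. It argues directly at a single $f\in\mathcal F\cap\mathcal U$ and a single $x_i\in U$: since $\varangle(E(x_i),F(x_i))\geq\alpha>2\max\{\eta,\theta\}$, the subspaces $E(x_i)$ and $F(x_i)$ must sit in opposite cones of the pair $(\mathscr C_{V_f}(x_i,\eta),\mathscr C^*_{V_f}(x_i,\eta))$. If the ``wrong'' alternative held ($E\subseteq\mathscr C^*_{V_f}$ and $F\subseteq\mathscr C_{V_f}$), one finds a vector $u\in\mathscr C^*_{V_f}$ with $\varangle(\mathbb R u,F(x_i))\leq 2\eta$ whose image $Df(u)$ is within $2\theta$ of $E(f(x_i))$; two rotations of angle $<\alpha$ then produce linear maps satisfying the hypotheses of Lemma~\ref{Franks-version}, contradicting the standing bound~\eqref{assumption}. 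So the first inequality is established by the same perturbative mechanism (Franks' Lemma plus small rotations) that drives the whole section, not by taking limits.

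Your compactness route has a genuine gap in the case $\tau_n\to\infty$, and the gap is not cosmetic. The subspace $K_n=E_{f_n}(x^n_0)=\ker(Df_n^{1+\tau_n}|_{x^n_0})$ is determined by where the \emph{future} orbit first meets $\mathrm{Cr}_\kappa(f_n)$, and nothing in the hypotheses pins that down when $\tau_n$ is large. Concretely, even though $Df_n|_{x^n_0}$ has $\kappa$ tiny singular values in directions close to $V_{f_0}(x^*)$, the preimage $(Df_n|_{x^n_0})^{-1}\bigl(E_{f_n}(x^n_1)\bigr)$ can be far from $V_{f_n}(x^n_0)$ if $E_{f_n}(x^n_1)$ happens to be transverse to $Df_n(V_{f_n}(x^n_0))$; there is no a~priori obstruction to this. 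Your proposed fix---invoking the cone-limit mechanism from Proposition~\ref{prop:conedom} together with ``regional estimates''---is circular: that mechanism requires an invariant cone field (equivalently, domination), which is precisely what the present lemma is a step toward establishing. The paper sidesteps this entirely by using the perturbation argument, which needs no control on $\tau^+_i$ at all.
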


\begin{proof}
	Recall that $\varepsilon_0>0$ and $\alpha >0$ are fixed as in $\eqref{rotation}$. Assume that $2\eta, 2\theta <\min\{\varepsilon_0, \alpha\}/2$. By Lemma~\ref{VW-lemma}, there exist neighborhoods $\mathcal{U}$ and $U$ such that for every $f \in \mathcal{U}$ and $x \in U$ the inequality \eqref{ineq-lemma} holds and, moreover, for every $f \in \mathcal{F}\cap \mathcal{U}$ the component $\mathrm{Cr}_{\kappa}(f)$ is contained in $U$.

	We notice that from the inequality \eqref{ineq-lemma} follows that to prove the lemma it is sufficient to show that for every $(x_i)_i \in \Lambda_f$ whose $x_i \in U$, we have
	\begin{align}\label{eq-proof}
	E(x_i) \subseteq \mathscr{C}_{V_f}(x_i,\eta) \ \ \mathrm{and} \ \ F(x_i) \subseteq \mathscr{C}^{\ast}_{V_f}(x_i,\eta).
	\end{align}
	
	Thus, our current goal is to prove the equation \eqref{eq-proof}. For that, observe that as $\varangle(F(x_i),E(x_i))\geq \alpha>2\max\{\eta,\theta\}$, we have that the vectors of $E(x_i)$ and $F(x_i)$ are neither in $\mathscr{C}_{V_f}(x_i,\eta)$ nor in $\mathscr{C}_{V_f}^{\ast}(x_i,\eta)$ simultaneously. Thus, it is only one of the following can happen:
	\begin{itemize}
		\item either $E(x_i) \subseteq \mathscr{C}_{V_f}(x_i,\eta) \ \ \mathrm{and} \ \ F(x_i) \subseteq \mathscr{C}^{\ast}_{V_f}(x_i,\eta)$;
		\item or  $E(x) \subseteq \mathscr{C}^{\ast}_{V_f}(x_i,\eta) \ \ \mathrm{and} \ \ F(x_i) \subseteq \mathscr{C}_{V_f}(x_i,\eta)$.
	\end{itemize}
	
	If $E(x_i) \subseteq \mathscr{C}^{\ast}_{V_f}(x_i,\eta)$ and $F(x_i) \subseteq \mathscr{C}_{V_f}(x_i,\eta)$ then there is a vector $u \in \mathscr{C}^{\ast}_{V_f}(x_i,\eta)$ which $\varangle(\mathbb{R}u,F(x_i))\leq 2\eta$ and $\varangle(\mathbb{R} Df_x(u),E(f(x_i)))\leq 2\theta$. Since $2\eta,2\theta$ are smaller than $\alpha$, we can take two rotation $R_0$ and $R_1$ of angles less than $\alpha$ and $u' \in F(x_i)$ satisfying $u \in R_0(F(x_i))$ and $R_1(Df_x(u))\in E(f(x_i))$. Recall from the discussion in \eqref{rotation} to conclude that it is incompatible with \eqref{assumption}.
	
	Therefore,  $E(x_i) \subseteq \mathscr{C}_{V_f}(x_i,\eta)$ and $F(x_i) \subseteq \mathscr{C}^{\ast}_{V_f}(x_i,\eta)$ which concludes the proof.
\end{proof}

The previous lemma means that around of $\Lambda$ the subbundle $E$ is $\eta$-close to $V_f$, and that every $(d-\kappa)$-dimensional subspace contained in $\mathscr{C}_{V_f}^{\ast}$, which means it is away from $V_f$, is sent by $Df$ into a subspace $\theta$-close to $Df(W_f)$. This will allows us to state the following. 		

\begin{lemma}\label{constant-C_0}
	For each $\rho>0$ small enough, we can find $\eta, \theta>0$ and the neighborhoods $\mathcal{U}$ and $U$ satisfying Lemmas~\ref{VW-lemma} and \ref{lemma-CCS}  such that for each $f \in \mathcal{F}\cap \mathcal{U}$ and $(x_i)_i \in \Lambda_f$, if $x_i \in U$ then
	$$\|Df\mid_{E(x_i)}\|<\rho \ \ \text{and} \ \ \|Df(v)\|\geq 2\rho\|v\|, \, \forall v \in \mathscr{C}_{V_f}^{\ast}(x_i,\eta)\backslash\{0\}.$$
\end{lemma}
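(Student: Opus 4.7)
The plan is to exploit the strong spectral gap of $Df$ near $\mathrm{Cr}_\kappa(f_0)$: on the critical set the first $\kappa$ singular values of $Df_0$ vanish, while $\sigma_{\kappa+1}$ is uniformly positive by compactness. Since $\sigma_{\kappa+1}(f_0,\cdot)>0$ on the compact set $\mathrm{Cr}_\kappa(f_0)$, there exists $c_0>0$ with $\sigma_{\kappa+1}(f_0,x)\geq 2c_0$ on it. Continuous dependence of the singular values on $(f,x)$ then lets us shrink $\mathcal{U}\subseteq\mathcal{U}_0$ and $U\subseteq U_0$ so that $\sigma_{\kappa+1}(f,x)\geq c_0$ for every $f\in\mathcal{F}\cap\mathcal{U}$ and $x\in U$. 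Symmetrically, since $\sigma_\kappa(f_0,x)=0$ on $\mathrm{Cr}_\kappa(f_0)$, for any preassigned $\delta>0$ we can shrink $\mathcal{U},U$ further so that $\sigma_\kappa(f,x)<\delta$ on $\mathcal{U}\times U$.

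Fix $\rho>0$ small. Choose $\eta,\theta>0$ small enough that the conclusions of Lemmas~\ref{VW-lemma} and~\ref{lemma-CCS} hold on the current pair $\mathcal{U}\times U$ and so that $\sin(\eta)\geq 2\rho/c_0$. Then shrink $U$ once more to achieve $\sigma_\kappa<\delta$ with $\delta^2+K^2\sin^2(\eta)<\rho^2$, where $K=\sup_{f\in\mathcal{U}_0,\,x\in M}\|Df_x\|$. To obtain the upper bound, fix a unit vector $u\in E(x_i)$ with $x_i\in U$. By Lemma~\ref{lemma-CCS}, the orthogonal decomposition $u=u_V+u_W\in V_f(x_i)\oplus W_f(x_i)$ satisfies $\|u_W\|\leq\sin(\eta)$. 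Exploiting the orthogonality $Df(V_f)\perp Df(W_f)$ (property~(2)) together with the spectral bounds $\|Df|_{V_f}\|\leq\sigma_\kappa$ and $\|Df|_{W_f}\|\leq\sigma_d\leq K$, one has
\[
\|Df(u)\|^2=\|Df(u_V)\|^2+\|Df(u_W)\|^2\leq\sigma_\kappa^2+K^2\sin^2(\eta)<\rho^2.
\]

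For the lower bound, take $v\in\mathscr{C}^*_{V_f}(x_i,\eta)$; the angle condition $\varangle(v,V_f)\geq\eta$ in the orthogonal decomposition $v=v_V+v_W$ forces $\|v_W\|\geq\sin(\eta)\|v\|$, and the same perpendicularity yields
\[
\|Df(v)\|^2\geq\|Df(v_W)\|^2\geq\sigma_{\kappa+1}^2\|v_W\|^2\geq c_0^2\sin^2(\eta)\|v\|^2\geq(2\rho)^2\|v\|^2,
\]
as required. The main obstacle I foresee is the parameter balance hidden in the second paragraph: the lower bound demands $\sin(\eta)\gtrsim\rho/c_0$, whereas the upper bound demands $K\sin(\eta)\lesssim\rho$ once the $\sigma_\kappa$ slack is exhausted. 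Reconciling these cleanly is the technical heart of the argument; I expect to handle it by sharpening the $Df(u_W)$ contribution with a factor that degenerates at the critical set (using that $u_W$ is forced to lie in an $\eta$-narrow sector of $W_f$ by the combination of Lemma~\ref{lemma-CCS} with the estimates in Lemma~\ref{VW-lemma}), and by a nested chain of shrinkings first in $\eta$ and then in $\delta$, with $\rho$ ultimately chosen small relative to $c_0/K$.
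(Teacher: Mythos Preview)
Your lower-bound argument is correct and in fact cleaner than the paper's (you exploit the orthogonality $Df(V_f)\perp Df(W_f)$, while the paper uses the triangle inequality). The gap you yourself flag in the upper bound, however, is fatal for the approach as written, and neither of your proposed remedies can close it.

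The incompatibility is structural: from the lower bound you need $\sin\eta\geq 2\rho/c_0$, and from your upper bound $\|Df(u)\|^2\leq\sigma_\kappa^2+K^2\sin^2\eta$ you need $K\sin\eta<\rho$. Together these force $2K<c_0$, which is impossible since $c_0\leq\sigma_{\kappa+1}\leq\sigma_d\leq K$ always. Taking $\rho$ small does not help, because both constraints scale linearly in $\rho$. Your idea of ``sharpening the $Df(u_W)$ contribution with a factor that degenerates at the critical set'' cannot work either: $u_W\in W_f$ lies in the \emph{fast} eigenspace, so $\|Df(u_W)\|\geq\sigma_{\kappa+1}\|u_W\|\geq c_0\|u_W\|$ is bounded \emph{below}, not above, near the critical set; the only control you have on $\|u_W\|$ is $\|u_W\|\leq\sin\eta$, and $\eta$ is pinned by the lower bound.

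The paper escapes this trap by treating the upper bound entirely differently. It does \emph{not} rely on the cone containment $E(x_i)\subseteq\mathscr{C}_{V_f}(x_i,\eta)$ for the fixed $\eta$. Instead it uses that $E$ and $V_f$ \emph{coincide} on $\mathrm{Cr}_\kappa(f)$ (since $m_f=1$ there, $E(x_i)=\ker Df_{x_i}=V_f(x_i)$), and then appeals to the fact that, as $U$ shrinks to $\mathrm{Cr}_\kappa(f_0)$, the subspace $E(x_i)$ is forced ever closer to $V_f(x_i)$---effectively Lemma~\ref{lemma-CCS} applied with an auxiliary aperture $\eta'\to 0$, independent of the $\eta$ appearing in the statement. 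Hence $\|Df|_{E(x_i)}\|\to\|Df|_{V_f(x_i)}\|=\sigma_\kappa\to 0$ as $U$ shrinks, and one can make it $<\rho$ without touching $\eta$. This decouples the two bounds: $\eta$ is fixed once and for all (governing only the lower bound and determining how small $\rho$ must be), while the upper bound is achieved purely by shrinking $U$.
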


\begin{proof}
	Recall that the maps $\mathcal{U} \times U \ni(f,x) \mapsto V_f(x), \, W_f(x)$ are continuous, where $\mathcal{U}$ and $U$ are neighborhoods of $f_0$ and $\mathrm{Cr}_{\kappa}(f_0)$ respectively. Then, up to shrinking the neighborhoods $\mathcal{U}$ and $U$, we can assume that $V_f$ and $V_{f_0}$ are $\eta$-close to each other and, consequently, $W_f$ and $W_{f_0}$ are both contained in the dual-cone of each other.
	
	Fix $\eta,\theta>0$ small enough and take the neighborhoods $\mathcal{U}$ and $U$ satisfying Lemmas~\ref{VW-lemma} and \ref{lemma-CCS}. We observe that for every $u=(v,w) \in \mathscr{C}^{\ast}_{V_f}(x,\eta)$, with $v \in V_f(x)$ and $w\in W_f(x)$, \begin{align*}
	\tan\eta \leq \tan\varangle(\mathbb{R}u,V_f(x))=\dfrac{\|w\|}{\|v\|},
	\end{align*}
	and, then,
	\begin{align*}
	\|Df(u)\|&\geq \|Df(w)\|-\|Df(v)\|\\
	&\geq \m(Df\mid_{W_f(x)})\|w\|-\|Df\mid_{V_f(x)}\|\|v\|\\
	&\geq \left(\m(Df\mid_{W_f(x)})-\dfrac{\|Df\mid_{V_f(x)}\|}{\tan \eta}\right)\|w\|.
	\end{align*}
	
	We now use that $\left(1+\dfrac{1}{\tan \eta}\right)\|w\|\geq \|w\|+\|v\|\geq \|u\|$ to obtain that
	\begin{align*}
	\|Df(u)\|\geq \left(\dfrac{\tan \eta}{1+\tan \eta}\right)\left(\m(Df\mid_{W_f(x)})-\dfrac{\|Df\mid_{V_f(x)}\|}{\tan \eta}\right)\|u\|.
	\end{align*}
	
	Since $V_{f_0}(x)=\ker(Df_0)$ and $\m(Df\mid_{W_{f_0}(x)})$ is uniformly away from zero for each $x \in \mathrm{Cr}_{\kappa}(f_0)$, we can shrink only the neighborhoods $\mathcal{U}$ and $U$ to obtain that $V_f(x)$ and $W_f(x)$ are close to $V_{f_0}(x)$ and $W_{f_0}(x)$ for every $x \in U$. Then, we can assume that
	$\m(Df\mid_{W_f(x)})$ is uniformly away from zero and $\|Df\mid_{V_f(x)}\|$ is so close to $0$ such that $$\left(\dfrac{\tan \eta}{1+\tan \eta}\right)\left(\m(Df\mid_{W_f(x)})-\dfrac{\|Df\mid_{V_f(x)}\|}{\tan \eta}\right)\geq 2\rho>0$$
	and, hence, $\|Df(u)\|\geq 2 \rho \|u\|$ for every $x \in U$.
	
	Finally, we take $f\in \mathcal{F}\cap \mathcal{U}$ and use that $E(x)$ and $V_f(x)$ depend continuously on $x$ in $U$ and are both identical on $\mathrm{Cr}_{\kappa}(f)$ to conclude that $\|Df\mid_{E(x)}\|\approx \|Df\mid_{V_f(x)}\|$ which is close to $\|Df_0\mid{V_{f_0}}(x)\|\approx 0$ for each $x \in U$. Therefore, up to shrinking the neighborhoods $\mathcal{U}$ e $U$, we can get that $\|Df\mid_{E(x)}\|<\rho$ for every $x \in U$.
\end{proof}

Now we are able to prove Lemma \ref{lemma-domination}.

\subsection*{Proof of Lemma \ref{lemma-domination}}
By Lemma \ref{lemma-CCS}, for all $\theta >0$ and all $\eta> 0$ small, there exist neighborhoods $\mathcal{U}$ and $U$ so that for each $f \in \mathcal{F}\cap\mathcal{U}$ and each $(x_i)_i \in \Lambda_f$ if $x_i \in U$ then the following hold:
\begin{align}\label{eq-cone-EF}
E(x_i)\subseteq \mathscr{C}_{V_f}(x_i,\eta) \ \ \mathrm{and} \ \ F(x_i)\subseteq \mathscr{C}^{\ast}_{V_f}(x_i,\eta).
\end{align}
In particular, $\varangle(Df(W_f(x_i)), \mathbb{R} Df(v))<\theta$ for every $v \in \mathscr{C}^{\ast}_{V_f}(x_i,\eta)$.

We consider the cone-field centered at $F$ of length $\beta>0$ in coordinates $E\oplus F$ over $(x_i)_i \in \Lambda_f$ by:
\begin{align*}
\mathscr{C}_{F,E}(x_i,\beta)=\{(u_1, u_2)  \in E(x_i)\oplus F(x_i):\|u_1\|\leq \beta \|u_2\|\}.
\end{align*}

Choose $\beta>0$ so that $\varangle(\mathbb{R}u,F(x_i))<\alpha$ for every $u \in \mathscr{C}_{F,E}(x_i,\beta)$, where $\alpha>0$ was fixed in \eqref{rotation}.

\begin{remark}
	It should be emphasized that the cone-field above is not as the previous one defined in \S\ref{subsec:cone-criterion}, which the length is the angle, since $E$ and $F$ are not necessary orthogonal.
\end{remark}

Since the angle between $E$ and $F$ is uniformly away from zero, we can assume that $\theta, \eta >0$ above are chosen small enough so that for any $(x_i)_i \in \Lambda_f$, one has that if  $x_i \in U$ then:
\begin{enumerate}[label=(\alph*)]
	\item \label{eq:a}$\forall v \in \mathscr{C}_{V_f}(x_i,\eta) \Longrightarrow \varangle(\mathbb{R}v, E(x_i))<\alpha$; 
	\item \label{eq:b}$\varangle(\mathbb{R}u,F(f(x_i)))<2\theta \Longrightarrow u \in \mathscr{C}_{F,E}(f(x_i),\beta/2)$.
\end{enumerate}

Assume now that $(x_i)_i \in \Lambda$ satisfies the inequality \eqref{eq-lemma}. That is, $(x_i)_i$ satisfies:
\begin{align*}
\|Df^j\mid_{E(x_0)}\|\geq \frac{1}{2}\m(Df^j\mid_{F(x_0)}), \ \ \text{for any} \ \ 1\leq j \leq l.
\end{align*}
In particular, it is immediate from Lemma \ref{constant-C_0} that $x_0 \notin U$.

It remains to prove that $x_1,\dots, x_{l-1} \notin U$. For that, we first observe that $x_1,\dots, x_{l-1}$ are pairwise distinct since, without loss of generality, if $x_l=x_0$ then, as the orbit hit on $\mathrm{Cr}_{\kappa}(f)$ infinitely many time for the future, we have that $x_j \in \mathrm{Cr}_{\kappa}(f)$ for some $\leq 1\leq j\leq l-1$ and, hence, $\|Df\mid_{E(x_j)}\|=0$. This contradicts the fact that $\|Df\mid_{F(x_j)}\|\neq 0$.

Second, we suppose without loss of generality that $x_{l-1} \in U$ and take $v \in E(x_0)$ and $w \in F(x_0)$ the unit vectors whose 
\begin{align*}
\|Df^l(v)\|=\|Df^l\mid_{E(x_0)}\| \geq \frac{1}{2} \m(Df^l\mid_{F(x_0)})=\|Df^l(w)\|.
\end{align*}
Then, we have that $u=(v,\beta^{-1}w) \in \partial\mathscr{C}_{F,E}(x_0,\beta)$ since  $\|v\|=1=\beta\|\beta^{-1}w\|$ and, hence,
\begin{align*}
\|Df^l(v)\|=\|Df^l\mid_{E(x_0)}\|\geq \frac{1}{2}\m(Df^l\mid_{F(x_0)})
=\frac{\beta}{2}\|Df^l(\beta^{-1}w)\|.
\end{align*}
Taking $u_j=Df^j(v,\beta^{-1}w)$ for each $1\leq j \leq l$, we have that $u_l=Df^l(v,\beta^{-1}w)$ belongs to dual cone $\mathscr{C}_{F,E}^{\ast}(x_l,\beta/2)$. This implies that $u_{l-1} \in  \mathscr{C}_{V_f}(x_{l-1},\eta)$. Otherwise, if $u_{l-1} \notin  \mathscr{C}^{\ast}_{V_f}(x_{l-1},\eta)$ then, writing $u_{l-1}=(v',w') \in V_f(x_{l-1})\oplus W_f(x_{l-1})$ and taking $z$ in the intersection between $F(x_{l-1})$ and the plane generated by $\{u_{l-1},w'\}$, one can conclude that $z=av'+bw'$ and, by Lemma~\ref{lemma-CCS}, one gets that
\begin{align*}
\varangle(\mathbb{R} u_l,F(x_l))& \leq \varangle(\mathbb{R} Df(u_{l-1}),\mathbb{R}Df(z))\\
&\leq \varangle(\mathbb{R} Df(u_{l-1}),\mathbb{R}Df(w')) +\varangle(\mathbb{R}Df(w'),\mathbb{R}Df(z))\leq 2\theta,
\end{align*}
that, by \ref{eq:b}, it implies that $u_l \in \mathscr{C}_{F,E}(x_l,\beta/2)$ which is a contradiction.

Thus, by \ref{eq:a} and the choice of $\beta>0$, one concludes that $\varangle(\mathbb{R}u_{l-1}, E(x_{l-1}))< \alpha$ and $\varangle(\mathbb{R}u, F(x_0))< \alpha$ since $u \in \mathscr{C}_{F,E}(x_0,\beta)$. Thus, we can find two rotations $R_0:T_{x_0}M \to T_{x_0}M$ and $R_1:T_{x_{l-1}}M \to T_{x_{l-1}}M$ of angle small than $\alpha$ and $u_0 \in F(x_0)$ such that
$$R_0(u_0)=u, \ \ R_1( u_{l-1}) \in E(x_{l-1}), \ \ \text{and} \ \ \|R_iDf-Df\|<\varepsilon_0,\ \ i=0,1.$$

Finally, we take that $\Sigma=\{x_{-1},\cdots,x_{l-2}\}$ and $L_j:T_{x_{j-2}}M \to T_{x_{j-1}}M$ defined by
$L_1=R_0\circ Df_{x_{-1}},\,  L_2=Df_0, \, \cdots,\,
L_{l-1}= Df_{x_{l-3}},\, \text{and} \ \ L_{l}=R_1\circ Df_{x_{l-2}}$
satisfy the hypothesis of Lemma~\ref{Franks-version} which is incompatible with \eqref{assumption}. \qed

\subsection{Proof of the uniform domination property}

This section is  devoted to conclude the proof of the uniform domination property. First of all, some preliminaries are needed. 

Let us introduce a classical result about lack of domination of matrices which some proofs can be seen in \cite[Appendix A]{Potrie} and \cite[Lemma 3.1]{BM}. 

\begin{lemma}\label{lemma-matrices}
	For all $\delta >0$ and all $N>1$, there exists $l > 1$ such that if $A_1,..., A_l$  are matrices in ${\mathrm{GL}}(2,\mathbb{R})$ and $v, w \in \mathbb{R}^2$ are two unit vectors verifying:
	\begin{align}\label{eq-matrices}
	\|A_i\cdots A_1(v)\|\geq \dfrac{1}{2} \|A_i\cdots A_1(w)\|; \ \ \text{and} \ \
	\|A_i\|,\|A_i^{-1}\|\leq N,
	\end{align}
	for every $1\leq i \leq l$. Then, there exist $R_i:V_i \to V_i$ rotation of angles smaller than $\delta$ such that $R_lA_l\cdots R_1 A_1(w)$ and $A_l\cdots A_1(v)$ are parallel.
\end{lemma}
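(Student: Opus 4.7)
My proof plan views Lemma~\ref{lemma-matrices} as a reachability question for a perturbed one-dimensional dynamical system on the projective line.

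\emph{Setup in adapted bases.} Choose orthonormal bases $e_1^{(i)} = v_i/\|v_i\|$, $e_2^{(i)} \perp e_1^{(i)}$ along the orbit $v_i = A_i \cdots A_1 v$. In these bases each $A_i$ becomes upper-triangular
\begin{align*}
\tilde A_i = \begin{pmatrix} \lambda_i & \gamma_i \\ 0 & \beta_i \end{pmatrix},\qquad \lambda_i = \|v_i\|/\|v_{i-1}\| > 0,
\end{align*}
with $\lambda_i, |\beta_i|, |\gamma_i| \leq N$ and $\lambda_i|\beta_i| = |\det A_i| \geq N^{-2}$, hence $\lambda_i, |\beta_i| \geq N^{-3}$. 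Parametrize $\mathbb{P}^1$ by the signed angle $\theta \in \mathbb{R}/\pi\mathbb{Z}$ with $v_i$; then $\tilde A_i$ descends to a circle diffeomorphism $\psi_i$ fixing $0$, with derivative $\psi_i'(\theta) = |\det A_i|/\|\tilde A_i(\cos\theta,\sin\theta)\|^2 \in [N^{-4}, N^4]$, while a rotation $R_i$ of angle $\phi_i \in (-\delta,\delta)$ acts as $\theta \mapsto \theta + \phi_i$. Set $\theta_0 = \angle(w,v) \in (0,\pi)$ (otherwise the conclusion is immediate). The goal becomes: choose $(\phi_1,\dots,\phi_l) \in (-\delta,\delta)^l$ so that the orbit $\theta_i = \psi_i(\theta_{i-1}) + \phi_i$ satisfies $\theta_l = 0$.

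\emph{Hypothesis translation.} Computing $\tilde A_l \cdots \tilde A_1 w$ in the last basis, its second coordinate equals $\bigl(\prod_{j=1}^l \beta_j\bigr)\sin\theta_0$; comparing with the norm bound $\|w_i\| \leq 2\|v_i\| = 2\prod_{j\leq i}\lambda_j$, applied at every intermediate step, yields
\begin{align*}
\Big|\prod_{j=1}^i \beta_j/\lambda_j\Big| \leq \frac{2}{\sin\theta_0}, \qquad 1 \leq i \leq l.
\end{align*}
In parallel, the chain-rule identity $\psi_j'(\theta_{j-1}) = |\det A_j|\|w_{j-1}\|^2/\|w_j\|^2$ along the unperturbed orbit converts this into control of the composite derivative of $\psi_l \circ \cdots \circ \psi_1$ at $\theta_0$.

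\emph{Reachability and dichotomy.} Let $S_i \subseteq \mathbb{R}/\pi\mathbb{Z}$ be the set of values of $\theta_i$ reachable by some admissible $(\phi_1,\dots,\phi_i)$: then $S_i$ is an open arc, $S_0 = \{\theta_0\}$, and $S_{i+1}$ equals the $\delta$-neighborhood of $\psi_{i+1}(S_i)$. The conclusion reduces to showing $0 \in S_l$ for $l = l(\delta,N)$ large enough. I argue via a dichotomy: either at some step $j$ the arc $\psi_{j+1}(S_j)$ meets $(-\delta,\delta)$, whence a single rotation $\phi_{j+1}$ makes $\theta_{j+1} = 0$ and the residual $\phi_k = 0$ keep the orbit at the fixed point $\psi_k(0) = 0$; or else $|S_l|$ grows past $\pi/2$ and therefore covers $0$.

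\emph{Main obstacle.} The principal technical hurdle is the second branch of the dichotomy: strong local contractions of $\psi_{i+1}$ can compress $\psi_{i+1}(S_i)$ and offset the $2\delta$ widening, preventing the arc from accumulating length. The partial-product bound caps how cumulatively contracting the linearizations at $0$ can be, and the uniform distortion $|\psi_j'| \in [N^{-4}, N^4]$ transfers this (up to constants) to the derivative along the actual orbit; together they force that long streaks of strong contraction be followed by compensating moderate-or-expanding segments, during which the $2\delta$ widening dominates. A quantitative accounting then yields $|S_l| \geq \pi/2$ for $l \geq l(\delta,N)$, so that $0 \in S_l$, and the corresponding rotations $R_i$ of angles $|\phi_i| < \delta$ provide the required alignment $R_l A_l \cdots R_1 A_1 w \parallel A_l \cdots A_1 v$.
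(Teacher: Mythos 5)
First, a caveat on the comparison: the paper does not prove Lemma~\ref{lemma-matrices} at all — it is quoted as a classical fact with proofs in \cite[Appendix A]{Potrie} and \cite[Lemma 3.1]{BM} — so your attempt can only be judged on its own terms. Your reformulation is correct and natural: working in orthonormal bases adapted to the orbit $v_i$ makes the matrices upper triangular with $\lambda_i=\|v_i\|/\|v_{i-1}\|$, the induced circle maps $\psi_i$ fix $0$ with $\psi_i'\in[N^{-4},N^4]$, the hypothesis does give $|\prod_{j\le i}\beta_j/\lambda_j|\le 2/\sin\theta_0$, and the first branch of your dichotomy (once $\psi_{j+1}(S_j)$ meets $(-\delta,\delta)$, one rotation lands on the common fixed point $0$ and the remaining rotations may be taken trivial) does finish the proof. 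All of that checks out.

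The gap is the second branch, which is precisely where the content of the lemma lives, and the mechanism you propose for it fails. The inequality $|\prod_{j\le i}\beta_j/\lambda_j|\le 2/\sin\theta_0$ is an \emph{upper} bound on the product of the derivatives $\psi_j'(0)=|\beta_j|/\lambda_j$: it caps cumulative \emph{expansion} at the fixed point, not cumulative contraction, so it does not forbid "long streaks of strong contraction" at all. Concretely, take $A_i=\mathrm{diag}(2,1/2)$ for all $i$, $v=e_1$, and $w$ at angle $\theta_0\in[\delta,\pi/4]$ from $v$: the hypothesis \eqref{eq-matrices} holds, $\prod_{j\le i}\beta_j/\lambda_j=4^{-i}$ satisfies your bound trivially, yet $\psi'\approx 1/4$ along the whole orbit of $\theta_0$, so $|S_i|$ stabilizes near $2\delta/(1-1/4)$ and never approaches $\pi/2$. (The lemma still holds there, but only because $\theta_i\to 0$ and branch one fires.) Moreover, even where the bound at $0$ is relevant, transferring it to the arc $S_i$ via the "uniform distortion" $\psi_j'\in[N^{-4},N^4]$ costs a factor $N^{4}$ \emph{per step}, hence $N^{4l}$ over the whole block, which destroys any estimate uniform in $l$. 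What is actually needed, and is absent from the sketch, is the case analysis: if the cumulative projective derivative $\prod_j|\det A_j|/\|w_i\|^2$ along the orbit of $[w]$ decays at a definite geometric rate, then (using the singular value decomposition of $A_i\cdots A_1$ and the comparability $\|v_i\|\ge\frac12\|w_i\|$) both $[v_i]$ and $[w_i]$ are forced toward the most expanded image direction, so $\theta_i\to 0$ at a uniform rate and branch one occurs; if it does not decay, the arc grows. Without this step, and without the quantitative accounting you defer, the proposal establishes only the easy reductions and not the lemma.
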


Let us  make some comments about what we have done so far. Firstly, we have by Lemma~\ref{lemma-domination} that if $\dim \ker(Df_0)=\kappa$ then there exists a neighborhood $\mathcal{U}$ of $f_0$ and a neighborhood $U$ of $\mathrm{Cr}_{\kappa}(f_0)$ such that for every $f \in \mathcal{F}\cap \mathcal{U}$ and  every $(x_i)_i \in \Lambda_{f}$, we have that $\mathrm{Cr}_{\kappa}(f) \subseteq U$ and that the lack of domination property for the splitting $E\oplus F$ over $\Lambda_f$  only might occur away from $U$. Then, we can get $N \geq 1$ so that every $f \in \mathcal{U}$ satisfies:
\begin{align}\label{ineq:1}
\frac{1}{N}\leq \|Df_x|_{\Gamma}\|\leq N,\, \forall (x,\Gamma) \in \mathrm{Grass}_{\kappa}(TM) \ \ \text{with} \ \ x \notin U,
\end{align}
since $\ker(Df_x)$ for each $x \notin U$ has dimension strictly smaller $\kappa$ and  $Df$ on $M\backslash U$ is a continuous maps on compact set. In particular, $1/N\leq \|Df\mid_{E(x)}\|\leq N$ for each $x \notin U$ and each $f \in \mathcal{F}\cap \mathcal{U}$.

Recall that in the setting $\dim\ker(Df_0)=r<\kappa$, we can suppose that for every $f \in \mathcal{U}$ the following hold.
\begin{align}\label{ineq-2}
\dim \ker(Df)\leq r \ \ \text{and} \ \ \frac{1}{N}\leq \|Df_x|_{\Gamma}\|\leq N,\, \forall (x,\Gamma) \in \mathrm{Grass}_{\kappa}(TM).
\end{align}

We now fix $\ell_0\geq 1$ given by Lemma~\ref{lemma-matrices} for $\delta=\alpha$ and $N$ above. In sequel, we prove that the uniform domination property. In the first step, we assert that there is no large pieces of orbit without domination property. More concretely,
\begin{steps}
	For every $f \in \mathcal{F}\cap \mathcal{U}$ and every $(x_i)_i \in \Lambda_f$, there is an integer $1\leq j \leq \ell_0$ which depends of the orbit $(x_i)_i$ such that:
	\begin{align}\label{eq:step-1}
	\|Df^j\mid_{E(x_0)}\|\leq \frac{1}{2}\m(Df^j\mid_{F(x_0)}).
	\end{align}
\end{steps}

\begin{proof}[Proof of Step 1]
	The proof of this step will be by contradiction. That is, assume without loss of generality that for some $(x_i)_i \in \Lambda_f$,
	\begin{align}
	\|Df^j\mid_{E(x_0)}\|\geq \frac{1}{2}\m(Df^j\mid_{F(x_0)}).
	\end{align}
	for every $1\leq j \leq \ell_0$. Then, there exist two unit vectors $v_j \in E(x_0)$ and $w_j \in F(x_0)$ so that
	\begin{align}\label{eq-proof-2} \|Df^j(v_j)\|\geq \dfrac{1}{2}\|Df^j(w_j)\|,
	\end{align} for each $1\leq j \leq \ell_0$. We notice that $Df^j(v_j)\neq 0$ for every $1 \leq j \leq \ell_0$ since $Df^n(w_j)\neq 0$ for every $n\geq 1$. 
	
	It should also be note that, in the particular setting that $|\ker(Df_0)|=\kappa$, one has that $x_0, \dots, \, x_{\ell_0-1} \notin U$ by Lemma~\ref{lemma-domination}.
	
	Let us identify $T_{x_i}M$ with $\mathbb{R}^d$, for $0 \leq i \leq \ell_0$, and fix the notation $v=v_{\ell_0}$ and $w=w_{\ell_0}$. Take the family of matrices $A_1,...,A_m$ defined by $A_i=Df_{x_{i-1}}$ and the family of two-dimensional subspace $V_i=A_i\cdots A_1(V_0)$, where $V_0$ denote the subspace spanned by $v$ and $w$. Then, denote by $\tilde{A}_i:V_{i-1}\to V_i$ the restriction of $A_i$ to $V_{i-1}$ which is a isomorphism and, by \eqref{ineq:1} and \eqref{ineq-2}, verifies:
	\begin{align}
	\begin{split}
	&\max_{1\leq i\leq \ell_0}\{\|\tilde{A}_i\|,\|\tilde{A}^{-1}_i\|\}\leq N.
	\end{split}
	\end{align}
	This family $\tilde{A}_1,\dots,\tilde{A}_{\ell_0}$ satisfy (\ref{eq-matrices}). Therefore, there exist $\tilde{R}_{\ell_0}\dots, \tilde{R}_1$ rotations of angles less than $\theta_0$  such that
	$\tilde{R}_{\ell_0}\tilde{A}_{\ell_0}\cdots \tilde{R}_1\tilde{A}_1(w)$ and $\tilde{A}_{\ell_0}\cdots \tilde{A}_1(v)$ are parallel. Extending $\tilde{R}_i$ to the linear map $R_i$ on $\mathbb{R}^d$ that equals to identity on the orthogonal complement of $V_i$. Finally, $\Sigma=\{x_0,\dots,x_{\ell_0-1}\}$ and $L_i=R_i\circ A_i$, for $1\leq i \leq \ell_0$, verify the hypothesis of Lemma \ref{Franks-version} which is a contradiction. Therefore, one has that every $f \in \mathcal{F}\cap \mathcal{U}$ verifies \eqref{eq:step-1}.
\end{proof}

Finally, we can conclude the proof of uniform domination property.

\begin{steps}
	There exists $\ell \geq 1$ so that for every $f \in \mathcal{F}\cap \mathcal{U}$, the splitting $E\oplus F$ over $\Lambda_f$ satisfies $E\prec_{\ell} F$.
\end{steps}

\begin{proof}[Proof of Step 2]
	By Step 1, let $\ell \geq \ell_0$ be very large. Then, for all $(x_i)_i \in \Lambda_f$, there exists $1\leq j_0\leq \ell_0$ satisfying:
	\begin{align*}
	\|Df^{\ell}\mid_{E(x_0)}\|&\leq \|Df^{\ell-j_0}\mid_{E(x_{j_0})}\|\|Df^{j_0}\mid_{E(x_0)}\| \\
	&\leq \left(\frac{1}{2}\right) \|Df^{\ell-j_0}\mid_{E(x_{j_0})}\|m(Df^{j_0}\mid_{F(x_0)}).
	\end{align*}
	On the other hand, taking the point $(x_{j+j_0})_j$, so there exists $1 \leq j_1 \leq \ell_0$ such that:
	\begin{align*}
	\|Df^{\ell}\mid_{E(x_0)}\|&\leq \|Df^{\ell-(j_0+j_1)}\mid_{E(x_{j_0+j_1})}\|\|Df^{j_1}\mid_{E(x_{j_0})}\|\|Df^{j_0}\mid_{E(x_0)}\| \\
	&\leq \left(\frac{1}{2}\right)^2 \|Df^{\ell-(j_0+j_1)}\mid_{E(x_{j_1})}\|\m(Df^{j_1}\mid_{F(x_{j_0})})\m(Df^{j_0}\mid_{F(x_0)})\\
	&\leq \left(\frac{1}{2}\right)^2 \|Df^{\ell-(j_0+j_1)}\mid_{E(x_{j_1})}\|\m(Df^{j_0+j_1}\mid_{F(x_0)}).
	\end{align*}
	Repeating the process, we can find $n\geq 1$ so that $m_n=\ell-\sum_{i=0}^{n-1}j_i$ satisfies $1 \leq m_n \leq \ell_0$; and
	\begin{align*}
	\|Df^{\ell}\mid_{E(x_0)}\|&\leq\left(\frac{1}{2}\right)^n \|Df^{m_n}\mid_{E(x_{j_n})}\|\prod_{i=0}^{n}\m(Df^{j_i}\mid_{F(x_{j_i})}) \\
	&\leq \left(\frac{1}{2}\right)^n \|Df^{m_n}\mid_{E(x_{j_n})}\|\m(Df^{{\ell}-m_n}\mid_{F(x_0)})\\
	&\leq  \left(\frac{1}{2}\right)^n \dfrac{\max_{1\leq i\leq \ell_0}\{\|Df^i_x\|:x\in M \}}{\m(Df^{m_n}\mid_{F(x_{\ell-m_n})})} \m(Df^{m_n}\mid_{F(x_{\ell-m_n})})\m(Df^{{\ell}-m_n}\mid_{F(x_0)})\\		
	&\leq C_0 \left(\frac{1}{2}\right)^n\m(Df^{\ell}\mid_{F(x_0)})
	\end{align*}
	where $C_0$ is a constant satisfying:
	\begin{align*}
	C_0 \geq \dfrac{\max_{1\leq i\leq \ell_0}\{\|Df^i_x\|:x\in M \}}{\min_{1\leq j \leq \ell_0}\{\|Df^j\mid_{F(x_i)}\|: (x_i)_i \in \Lambda_f \}} >0.
	\end{align*}
	
	Note that  $C_0$ is well-defined and is uniform in $\mathcal{F}\cap \mathcal{U}$. Indeed, we can shrink $\mathcal{U}$ (if necessary) to obtain that $\dim\ker(Df^j) \leq \dim \ker(Df^j_0)\leq \kappa$ at $x$ and $\ker(Df^j)$ is close to $\ker(Df^j_0)$ on $T_xM$ for each $x \in M, \ \ 1\leq j\leq \ell_0,$ and  $f \in \mathcal{U}$. Then we assume that $\ker(Df^j_x)$ is contained in a cone $\mathscr{C}_{V}(x,\eta)$, where $V$ is the kernel of $Df_0^{\ell_0}$ at $x$ and $0<\eta<\alpha$, for each $x \in M$ and $1\leq j \leq \ell_0$. Since $\ker(Df^j_{x_i})\subseteq E(x_i)$, for every $1\leq j \leq \ell_0$, and $\varangle(E(x_i), F(x_i))\geq \alpha$ for every $(x_i)_i \in \Lambda_f$ and $f \in \mathcal{F}\cap \mathcal{U}$, we conclude that $F(x_i)$ is contained in the complement cone of $\mathscr{C}_{V}(x, \eta)$ and that is enough to get that $\min_{1\leq j \leq \ell_0}\{\|Df^j\mid_{F(x_i)}\|: (x_i)_i \in \Lambda_f \}$ is uniformly away from zero.
	
	%The constant $C_0$ is well-defined since $f$ is close to $f_0$ together with the fact that $\ker(Df^n_{x_i})$ is contained in $E(x_i)$, for all $n\geq 1$, and $F(x_i)$ is contained in $\mathscr{C}_{E}^{\ast}(x_i, \alpha)$ which transversal to $E(x_i)$. Recall that $\rho$ is given by Lemma~\ref{constant-C_0} and $N$ is given in $\eqref{ineq:1}$ and \eqref{ineq-2}. 
	
	Finally, we can choose $\ell>0$ large enough so that $n\geq 1$ satisfies $C_02^{-n}\leq 2^{-1}$ and so $E \prec_{\ell} F$. This concludes the uniform domination property.
\end{proof}

Therefore, we have just proved that every $f \in \mathcal{F}\cap \mathcal{U}$ admits $E\oplus F$ as a $(\alpha, \ell)$-dominated splitting. This concludes the proof of Theorem~\ref{thm B}. \qed

\begin{acknowledgement}
	The authors thank the referee for her/his appropriate comments and suggestions.
\end{acknowledgement}
% % % % % % % % % % % % % % % % % % % % % % % % % % % % % % % % % % % % % % % % % % % % % % % % % % % % % % % % % % % % % % % % % % % % % % % % % % % % % % % % % % % % % % % % % % % % % % % % % % % % % % % % % % % % % % % % % % % % % % % % % % % % %

%%%%%%%%%%%%%%%%%%%%%%%%%%%%%%%%%%%%%%%%%%%%%%%%%%%%%%%%%%%%%%%%%%%%%%%%%%%%%%%%%%%%%%%%%%%%%%%%%%%%%%%%%%%%%%%%%%%%%%%%%%%%%%%%%%%%%%%%%%%%%%%%%%%%%%%%%%%%%%%%%%%%%%%%%%%%%%%%%%%%%%%%%%%%%%%%%%%%%%%%%%%%%%%%%%%%%%%%%%%%%%%%%%%%%%%%%%%%%%%%%%%%%%%%%%%%%%%%%%%%%%%%%%%%%%%%%%%%%%%%%%%%%%%%%%%%%%%%%%%%%%%%%%%%%%%%%%%%%%%%%%%%%%%%%%%%%%%%%%%%%%%%%%%%%%%%%%%%%%%%%%%%%%%%%%%%%%%%%%%%%%%%%%%%%%%%%%%%%%%%%%%%%%%%%%%%%%%%%%%%%%%%%%%%%%%%%%%%%%%%%%%%%%%%%%%%%%%%%%%%%%%%%%%%%%%%%%%%%%%%%%%%%%%%%%%%%%%%%%%%%%%%%%%%%%%%%%%%%%%%%%%%%%%%%%%%%%%%%%%%%%%%%%%%%%%%%%%%%%%%%%%%%%%%%%%%%%%%%%%%%%%%%%%%%%%%%%%%%%%%%%%%%%%%%%%%%%%%%%%%%%%%%%%%%%%%%%%%%%%%%%%%%%%%%%%%%%%%%%%%%%%%%%%%%%%%%%%%%%%%%%%%%%%%%%%%%%%%%%%%%%%%%%%%%%%%%%%%%%%%%%%%%%%%%%%%%%%%%%%%%%%%%%%%%%%%%%%%%%%%%%%%%%%%%%%%%%%%%%%%

%% References **************************************************************************************************************************************

\bibliographystyle{alpha}
\bibliography{Thesis-Biblio}

\begin{thebibliography}{{Zhu}19}

\bibitem[AB12]{AB}
A.~Avila and J.~Bochi.
\newblock Nonuniform hyperbolicity, global dominated splittings and generic
  properties of volume-preserving diffeomorphisms.
\newblock {\em Transactions of the American Mathematical Society},
  364(6):2883--2907, 2012.

\bibitem[AH94]{AH}
N.~Aoki and K.~Hiraide.
\newblock {\em Topological theory of dynamical systems}.
\newblock North-Holland Mathematical Library, 1st edition, 1994.

\bibitem[BD96]{BD}
C.~Bonatti and L.J. D{\'{\i}}az.
\newblock Persistent nonhyperbolic transitive diffeomorphisms.
\newblock {\em Ann. of Math. (2)}, 143(2):357--396, 1996.

\bibitem[BDP03]{BDP}
C.~Bonatti, L.~J. D{\'{\i}}az, and E.~Pujals.
\newblock A {$C^1$}-generic dichotomy for diffeomorphisms: weak forms of
  hyperbolicity or infinitely many sinks or sources.
\newblock {\em Ann. of Math. (2)}, 158(2):355--418, 2003.

\bibitem[BG09]{BG}
Jairo Bochi and Nicolas Gourmelon.
\newblock Some characterizations of domination.
\newblock {\em Mathematische Zeitschrift}, 262(713), July 2009.

\bibitem[BM15]{BM}
J.~Bochi and I.~D. Morris.
\newblock Continuity properties of the lower spectral radius.
\newblock {\em Proc. Lond. Math. Soc. (3)}, 110(2):477--509, 2015.

\bibitem[BPS19]{BPS}
J.~Bochi, R.~Potrie, and A.~Sambarino.
\newblock Anosov representations and dominated splittings.
\newblock {\em Jornal European Mathematical Society}, 21(11):3343\--3414, July
  2019.

\bibitem[BR13]{Berger-Rovella}
P.~Berger and A.~Rovella.
\newblock On the inverse limit stability of endomorphisms.
\newblock In {\em Annales de l'Institut Henri Poincare (C) Non Linear
  Analysis}, volume~30, pages 463--475. Elsevier, 2013.

\bibitem[BV00]{BV}
C.~Bonatti and M.~Viana.
\newblock Srb measures for partially hyperbolic systems whose central direction
  is mostly contracting.
\newblock {\em Israel J. Math.}, 115:157--193, 2000.

\bibitem[CP]{Crovisier-Potrie}
S.~Crovisier and R.~Potrie.
\newblock Introduction to partially hyperbolic dynamics.
\newblock {\em Notes for a minicourse in the School on Dynamical Systems 2015
  at ICTP}.

\bibitem[DPU99]{DPU}
L.~J. D{\'\i}az, E.~Pujals, and R.~Ures.
\newblock Partial hyperbolicity and robust transitivity.
\newblock {\em Acta Mathematica}, 183(1):1--43, 1999.

\bibitem[Fra71]{Franks}
J.~Franks.
\newblock Necessary conditions for stability of diffeomorphisms.
\newblock {\em Trans. Amer. Math. Soc.}, 158:301--308, 1971.

\bibitem[HJ12]{MatrixAnalysis}
Roger~A. Horn and Charles~R. Johnson.
\newblock {\em Matrix Analysis}.
\newblock Cambridge University Press, 2 edition, 2012.

\bibitem[ILP16]{ILP}
J.~Iglesias, C.~Lizana, and A.~Portela.
\newblock Robust transitivity for endomorphisms admitting critical points.
\newblock {\em Proc. Amer. Math. Soc.}, 144(3):1235--1250, 2016.

\bibitem[IP18]{IP}
Jorge Iglesias and Aldo Portela.
\newblock An example of a map which is $c^2$-robustly transitive but not
  $c^1$-robustly transitive.
\newblock {\em Colloquium Mathematicum}, 152, 03 2018.

\bibitem[LP13]{LP}
C.~Lizana and E.~Pujals.
\newblock Robust transitivity for endomorphisms.
\newblock {\em Ergodic Theory and Dynamical Systems}, 33(4):1082--1114, 2013.

\bibitem[LR17]{CW1}
C.~Lizana and W.~Ranter.
\newblock Topological obstructions for robustly transitive endomorphisms on
  surfaces.
\newblock {\em arXiv:1711.02218}, 2017.

\bibitem[LR19]{CW2}
C.~Lizana and W.~Ranter.
\newblock New classes of {$C^1$} robustly transitive maps with persistent
  critical points.
\newblock {\em arXiv:1902.06781}, 2019.

\bibitem[Ma{\~n}78]{Mane}
R.~Ma{\~n}{\'e}.
\newblock Contributions to the stability conjecture.
\newblock {\em Topology}, 17(4):383--396, November 1978.

\bibitem[Ma{\~n}82]{Mane-Closinglemma}
R.~Ma{\~n}{\'e}.
\newblock An ergodic closing lemma.
\newblock {\em Annals of Mathematics}, 116(3):503--540, 1982.

\bibitem[Mor20]{Morelli}
J.~C. Morelli.
\newblock An example of a {$\mathbb{T}^n$} endomorphism that is persistently
  singular and {$C^1$} robustly transitive.
\newblock {\em arXiv:2008.00911}, 2020.

\bibitem[MS74]{Milnor-Stasheff}
J.W. Milnor and J.D. Stasheff.
\newblock {\em Characteristic Classes}.
\newblock Annals of mathematics studies. Princeton University Press, 1974.

\bibitem[Pot12]{Potrie}
R.~Potrie.
\newblock {\em Partial Hyperbolicity and attracting regions in 3-dimensional
  manifolds}.
\newblock PhD thesis, PEDECIBA-Universidad de La Republica-Uruguay, 2012.

\bibitem[QTZ19]{QTZ}
A.~Quas, P.~Thieullen, and M.~Zarrabi.
\newblock Explicit bounds for separation between oseledets subspaces.
\newblock {\em Dynamical Systems}, 34(3):517\--560, feb 2019.

\bibitem[Shu71]{Shub-ex}
M.~Shub.
\newblock Topologically transitive diffeomorphisms of $\mathbb{T}^4$.
\newblock {\em Symposium on Differential Equations and Dynamical Systems,
  Springer Lecture Notesin Mathematics}, 206:39--40, 1971.

\bibitem[{Zhu}19]{Z}
Feng {Zhu}.
\newblock {Relatively dominated representations}.
\newblock {\em arXiv e-prints}, page arXiv:1912.13152, December 2019.

\end{thebibliography}

\end{document}